\newcommand{\bdism}{\begin{displaymath}}
\newcommand{\edism}{\end{displaymath}}
\newcommand{\cc}{\mathbb{C}}
\newcommand{\rr}{\mathbb{R}}
\newcommand{\qq}{\mathbb{Q}}
\newcommand{\pp}{\mathbb{P}}
\newcommand{\oo}{\mathcal{O}}
\newcommand{\dd}{\mathfrak{D}}
\newcommand{\bb}{\mathfrak{B}}
\DeclareMathOperator{\vol}{vol} 
\DeclareMathOperator{\supp}{Supp} 
\DeclareMathOperator{\bs}{Bs} \DeclareMathOperator{\B}{\mathbf{B}}
\newtheorem{theorem}{Theorem}[section]
\newtheorem{proposition}[theorem]{Proposition}
\newtheorem{corollary}[theorem]{Corollary}
\newtheorem{lemma}[theorem]{Lemma}
\newtheorem{remark}[theorem]{Remark}
\newtheorem{example}[theorem]{Example}
\newtheorem{definition}[theorem]{Definition}
\newtheorem{question}[theorem]{Question}
\newtheorem{conjecture}[theorem]{Conjecture}
\newtheorem{fact}[theorem]{Fact}
\address{Department of Mathematics, Princeton University, NJ 08544-1000,
USA} \email{gdi@math.princeton.edu}
\address{Department of Mathematics, Duke University, Durham NC 27708-0320,
USA} \email{luca@math.duke.edu}
\author{Gabriele Di Cerbo and Luca F. Di Cerbo}
\title{\bf Positivity in K\"ahler-Einstein theory}
\begin{document}
\pagestyle{headings}

\begin{abstract}
Tian initiated the study of incomplete K\"ahler-Einstein metrics on
quasi-projective varieties with cone-edge type singularities along a
divisor, described by the cone-angle $2\pi(1-\alpha)$ for $\alpha\in
(0, 1)$. In this paper we study how the existence of such
K\"ahler-Einstein metrics depends on $\alpha$. We show that in the
negative scalar curvature case, if such K\"ahler-Einstein metrics
exist for all small cone-angles then they exist for every
$\alpha\in(\frac{n+1}{n+2}, 1)$, where $n$ is the dimension. We also
give a characterization of the pairs that admit negatively curved
cone-edge K\"ahler-Einstein metrics with cone angle close to $2\pi$.
Again if these metrics exist for all cone-angles close to $2\pi$,
then they exist in a uniform interval of angles depending on the
dimension only. Finally, we show how in the positive scalar
curvature case the existence of such uniform bounds is obstructed.
\end{abstract}

\maketitle

\tableofcontents

\section{Introduction}\label{introduction}
\pagenumbering{arabic}

 G.~Tian \cite{Tian} outlined a very general program
for the construction of \emph{incomplete} K\"ahler-Einstein metrics
on quasi-projective varieties $X\setminus D$ where $X$ is a smooth
projective variety and $D\subset X$ is a simple normal crossing
divisor. Such metrics have cone-edge like singularities along $D$,
their asymptotic behavior is described by the cone-angle
$2\pi(1-\alpha)$ for $\alpha\in(0, 1)$. These metrics have been
extensively studied, see for example \cite{CDS}, \cite{Tian}, \cite{Donaldson},
\cite{Berman}, \cite{Brendle}, \cite{Campana1}, \cite{Mazzeo},
\cite{Mazzeo2}. Recently, Jeffres-Mazzeo-Rubinstein \cite{Mazzeo}
and Mazzeo-Rubinstein \cite{Mazzeo2} made substantial progress
toward the completion of Tian's original program. In particular,
they show that such metrics, with negative scalar curvature and cone
angle $2\pi(1-\alpha)$, exist when $K_X+\alpha D$ is ample, see also
\cite{Campana1} for the case $\alpha\in[\frac{1}{2}, 1)$.

In this paper we study geometric problems motivated by these
results.

\begin{itemize}
\item Characterize those pairs $(X,D)$ for which
a K\"ahler-Einstein metric with cone angle $2\pi(1-\alpha)$ exists
for some $\alpha$.
\item Given $(X,D)$, describe the values $\alpha$ for which
such a metric exists.
\end{itemize}

We have a satisfactory answer to these questions for small
cone-angles in the negative scalar curvature case.

\begin{theorem}\label{th1}
Let $X$ be a smooth,  $n$-dimensional projective variety and
$D\subset X$ a simple normal crossing divisor. The following are
equivalent.
\begin{enumerate}
\item  $(X, D)$ admits negative
K\"ahler-Einstein metrics with cone-edge singularities along $D$
with cone angles $2\pi(1-\alpha)$ for all $\alpha\in (1-\epsilon,
1)$ and some $\epsilon>0$.
\item  $(X, D)$ admits negative
K\"ahler-Einstein metrics with cone-edge singularities along $D$
with cone angles $2\pi(1-\alpha)$ for every
 $\alpha\in (\frac{n+1}{n+2}, 1)$.
\item The self-intersection $\bigl(K_{X}+D\bigr)^n$ is positive,
$(K_{X}+D)\cdot C\geq 0$ for every curve $C\subset X$ and  there are
no curves $C\subset X$ such that $(K_{X}+D)\cdot C=0$ and
$K_{X}\cdot C\leq 0$.
\end{enumerate}
\end{theorem}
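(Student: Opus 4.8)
The plan is to prove the cycle of implications $(1)\Rightarrow(3)\Rightarrow(2)\Rightarrow(1)$. The implication $(2)\Rightarrow(1)$ is immediate: take $\epsilon=\frac{1}{n+2}$, so that $(1-\epsilon,1)=(\frac{n+1}{n+2},1)$. For $(1)\Rightarrow(3)$ I would use that the existence of a negatively curved cone-edge K\"ahler-Einstein metric with cone angle $2\pi(1-\alpha)$ along $D$ forces $K_X+\alpha D$ to be ample: the K\"ahler form of such a metric represents $2\pi c_1(K_X+\alpha D)$, and the Einstein condition together with the a priori estimates in \cite{Mazzeo} (or a Bochner-type argument in the spirit of Aubin-Yau) gives positivity of this class on every subvariety. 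Granting this, $K_X+\alpha D$ is ample for all $\alpha$ close to $1$; letting $\alpha\to1$ shows that $K_X+D$ is a limit of ample classes, hence nef, so $(K_X+D)\cdot C\ge0$ for every curve $C$, and since $K_X+D=(K_X+\alpha D)+(1-\alpha)D$ is a sum of an ample and an effective class it is big, whence $(K_X+D)^n>0$. Finally, a curve with $(K_X+D)\cdot C=0$ and $K_X\cdot C\le0$ would satisfy $(K_X+\alpha D)\cdot C=(K_X+D)\cdot C-(1-\alpha)D\cdot C=(1-\alpha)K_X\cdot C\le0$, contradicting ampleness of $K_X+\alpha D$; this gives the last condition in $(3)$.

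The substance of the theorem is $(3)\Rightarrow(2)$, and the main point is purely algebro-geometric. Set $N:=K_X+D$, which is nef and big by $(3)$. The key identity is
\bdism
K_X+\alpha D=(1-\alpha)\,(K_X+tN),\qquad t=\frac{\alpha}{1-\alpha},
\edism
under which $\alpha\in(\frac{n+1}{n+2},1)$ corresponds exactly to $t>n+1$; so it suffices to show that $K_X+tN$ is ample for every $t>n+1$. The crux is to prove that $K_X+(n+1)N$ is nef. If it were not, then by the Cone Theorem and the boundedness of $K_X$-negative extremal rays (after a small ample perturbation $\epsilon H$) there would be an extremal rational curve $C$ with $-K_X\cdot C\le n+1$ and $(K_X+\epsilon H+(n+1)N)\cdot C<0$; since $N$ is an integral nef divisor, the resulting inequality $(n+1)\,N\cdot C<-K_X\cdot C\le n+1$ forces $N\cdot C=0$, hence $(K_X+D)\cdot C=0$ and $K_X\cdot C<0$, contradicting the last hypothesis of $(3)$.

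Given that $K_X+(n+1)N$ is nef, for $t>n+1$ the class $K_X+tN=(K_X+(n+1)N)+(t-n-1)N$ is nef, and being the sum of a nef class and the nef-and-big class $(t-n-1)N$ it is also big; equivalently $K_X+\alpha D$ is nef and big for all $\alpha\in(\frac{n+1}{n+2},1)$. Since $(X,\alpha D)$ is klt for such $\alpha$ and $K_X+\alpha D$ is itself nef and big, the Kawamata-Shokurov base-point-free theorem shows that $K_X+\alpha D$ is semiample for rational $\alpha$. Moreover it is positive on every curve $C$: if $N\cdot C>0$ this follows from $(K_X+tN)\cdot C=(K_X+(n+1)N)\cdot C+(t-n-1)\,N\cdot C>0$, and if $N\cdot C=0$ then $(K_X+D)\cdot C=0$, so $K_X\cdot C>0$ by $(3)$ and $(K_X+tN)\cdot C=K_X\cdot C>0$. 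A semiample divisor positive on every curve is ample, since its associated contraction is then finite; hence $K_X+\alpha D$ is ample for all rational $\alpha\in(\frac{n+1}{n+2},1)$, and for all real such $\alpha$ by convexity of the ample cone. The Jeffres-Mazzeo-Rubinstein existence result \cite{Mazzeo} then produces the metrics for every $\alpha\in(\frac{n+1}{n+2},1)$, which is $(2)$.

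The step I expect to require the most care is the nefness of $K_X+(n+1)N$: one has to make the reduction from ``not nef'' to an honest bounded extremal rational curve precise, which is exactly where the sharp length bound $-K_X\cdot C\le n+1$ (and hence the threshold $\frac{n+1}{n+2}$) enters, together with the integrality of $N$ to conclude $N\cdot C=0$. A further, more routine point is to first establish semiampleness for rational $\alpha$ and then recover the whole interval of real parameters.
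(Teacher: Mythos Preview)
Your proof is correct and follows essentially the same approach as the paper: the implication $(3)\Rightarrow(2)$ is the content of Theorem~\ref{tian}, proved there (via Propositions~\ref{prop} and~\ref{prop2}) by the same cone-theorem argument you give for the nefness of $K_X+(n+1)(K_X+D)$, followed by the base-point-free theorem to upgrade big and strictly nef to ample, while the analytic bridge $(1)\Leftrightarrow$ ampleness of $K_X+\alpha D$ is Proposition~\ref{luca} combined with the existence results of \cite{Campana1}, \cite{Mazzeo}, \cite{Mazzeo2}. Your reparametrization $K_X+\alpha D=(1-\alpha)\bigl(K_X+t(K_X+D)\bigr)$ and your explicit handling of rational versus real $\alpha$ are purely cosmetic differences, and the ample perturbation $\epsilon H$ in your nefness step is unnecessary (the cone-theorem decomposition already suffices) but harmless.
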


Similarly, we can threat the negative scalar curvature case for
cone-angles close to $2\pi$.

\begin{theorem}\label{th2}
Let $X$ be a smooth, $n$-dimensional projective variety of general
type and $D\subset X$ a simple normal crossing divisor. The
following are equivalent.
\begin{enumerate}
\item  $(X, D)$ admits negative
K\"ahler-Einstein metrics with cone-edge singularities along $D$
with cone angles $2\pi(1-\alpha)$ for all $\alpha\in (0, \epsilon)$
and some $\epsilon>0$.
\item  $(X, D)$ admits negative
K\"ahler-Einstein metrics with cone-edge singularities along $D$
with cone angles $2\pi(1-\alpha)$ for every $\alpha\in (0,
\frac{1}{2n+1})$.
\item $K_{X}\cdot C\geq 0$ for every curve $C\subset X$ and there are no
curves $C\subset X$ such that $(K_{X}+D)\cdot C\leq0$ and
$K_{X}\cdot C=0$.
\end{enumerate}
\end{theorem}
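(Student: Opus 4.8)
The plan is to translate the existence of the metrics into positivity of the $\qq$-divisor $K_X+\alpha D$ and then to settle the resulting statement by minimal model theory. By \cite{Mazzeo}, ampleness of $K_X+\alpha D$ yields the metric with cone angle $2\pi(1-\alpha)$; conversely, if such a negatively curved metric exists, pairing its K\"ahler--Einstein form (whose cohomology class is a positive multiple of $c_1(K_X+\alpha D)$) with an arbitrary curve $C\subset X$ — for $C\subset D$ using the local cone structure of the metric near $D$ — gives $(K_X+\alpha D)\cdot C>0$. Granting this, $(2)\Rightarrow(1)$ is immediate. For $(1)\Rightarrow(3)$, from $(K_X+\alpha D)\cdot C>0$ for every curve $C$ and every $\alpha\in(0,\epsilon)$ one lets $\alpha\to0^+$ to obtain $K_X\cdot C\ge0$; and if $K_X\cdot C=0$ then $0<(K_X+\alpha D)\cdot C=\alpha D\cdot C$ forces $D\cdot C>0$, hence $(K_X+D)\cdot C=D\cdot C>0$, which is the second clause of $(3)$. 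So the whole content is $(3)\Rightarrow(2)$.

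Assume $(3)$. Since $X$ is of general type, $K_X$ is big, and the first clause of $(3)$ says $K_X$ is nef, so by the base-point-free theorem $K_X$ is semiample. Fix $\alpha\in(0,\tfrac1{2n+1})$; then $(X,\alpha D)$ is klt (log smooth, all coefficients $<1$) and $K_X+\alpha D=(1-\alpha)K_X+\alpha(K_X+D)$ is big. I claim that for such $\alpha$ the divisor $K_X+\alpha D$ is ample as soon as $(K_X+\alpha D)\cdot C>0$ for every irreducible curve $C$: if $K_X+\alpha D$ were not nef, the Cone Theorem for $(X,\alpha D)$ would produce a $(K_X+\alpha D)$-negative extremal ray spanned by an irreducible rational curve, contradicting positivity; hence $K_X+\alpha D$ is nef, then semiample by the base-point-free theorem (it is also big), and the induced morphism contracts no curve, so it is finite and $K_X+\alpha D$ is ample — whence $(2)$ by \cite{Mazzeo}. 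Now $(K_X+\alpha D)\cdot C>0$ is automatic when $K_X\cdot C=0$ (then $D\cdot C>0$ by $(3)$, so $(K_X+\alpha D)\cdot C=\alpha D\cdot C>0$) and when $C\not\subseteq\supp(D)$ with $K_X\cdot C\ge1$ (then $D\cdot C\ge0$). Thus everything reduces to the estimate
\[
-\,D\cdot C\ \le\ (2n+1)\,K_X\cdot C \qquad\text{for every irreducible curve }C\subseteq\supp(D)\text{ with }K_X\cdot C\ge1,\qquad(\ast)
\]
which gives $(K_X+\alpha D)\cdot C\ge K_X\cdot C\,\bigl(1-(2n+1)\alpha\bigr)>0$.

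It remains to prove $(\ast)$, and this is the heart of the matter. Let $D_I=\bigcap_{i\in I}D_i$ be the smallest stratum of $D$ containing $C$; it is smooth of dimension $m\le n-1$, and $C$ lies in no further component of $D$, so $D_i\cdot C\ge0$ for $i\notin I$. Writing $\bigl(\sum_{i\in I}D_i\bigr)|_{D_I}=c_1\bigl(N_{D_I/X}\bigr)$ and using iterated adjunction $K_{D_I}=\bigl(K_X+\sum_{i\in I}D_i\bigr)|_{D_I}$, one gets $-D\cdot C\le -c_1\bigl(N_{D_I/X}\bigr)\cdot C=K_X\cdot C-K_{D_I}\cdot C$, so $(\ast)$ follows once $-K_{D_I}\cdot C\le 2n\,K_X\cdot C$, which is clear when $K_{D_I}\cdot C\ge0$. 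When $K_{D_I}\cdot C<0$ one applies the Cone Theorem on the lower-dimensional $D_I$: decomposing $[C]\in\overline{NE}(D_I)$ into a $K_{D_I}$-nonnegative part and rational curves $R_j$ with $-K_{D_I}\cdot R_j\le m+1\le n$, and using that $K_X|_{D_I}$ is nef with $K_X\cdot R_j=(K_X|_{D_I})\cdot R_j\ge1$ for the $R_j$ that occur, one obtains $-K_{D_I}\cdot C\le(m+1)\,K_X\cdot C\le n\,K_X\cdot C\le 2n\,K_X\cdot C$. I expect the main obstacle to be exactly the italicized point $K_X\cdot R_j\ge1$, i.e.\ ruling out that $[C]$ is dominated by a $K_X$-trivial extremal rational curve $R_j$ of $D_I$; such an $R_j$ lies in $\supp(D)$ and satisfies $D\cdot R_j>0$ by $(3)$, and one has to exploit this, together with boundedness of the family of extremal rational curves of bounded $K_{D_I}$-degree and a descending induction over the strata of $D$, to close the argument. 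This is the one delicate step, and it is where the simple normal crossing hypothesis on $D$ and the precise value of the constant $2n+1$ are genuinely used.
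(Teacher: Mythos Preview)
Your reduction to the ampleness of $K_X+\alpha D$ and the implications $(2)\Rightarrow(1)$ and $(1)\Rightarrow(3)$ are fine and match the paper. The issue is in your proof of $(3)\Rightarrow(2)$, specifically the estimate $(\ast)$, where you yourself flag the gap: you cannot guarantee $K_X\cdot R_j\ge 1$ for the $K_{D_I}$-negative extremal rational curves $R_j$ on the stratum $D_I$. Your proposed fix via descending induction on strata is not a proof; a curve $R_j\subset D_1$ with $K_X\cdot R_j=0$, $D_1\cdot R_j<0$ and $D_2\cdot R_j>0$ (so $D\cdot R_j>0$, consistent with $(3)$) need not lie in any deeper stratum, so the induction does not move, and your chain of inequalities $-K_{D_I}\cdot C\le (m+1)K_X\cdot C$ genuinely fails on such a ray. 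The whole stratification detour is unnecessary.

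The paper's argument avoids all of this by applying the cone theorem \emph{to the lc pair $(X,D)$ on $X$} rather than to the strata. One writes, for an arbitrary irreducible curve $C$,
\[
[C]\equiv\sum a_j[C_j]+[F]\quad\text{in }\overline{NE}(X),\qquad 0<-(K_X+D)\cdot C_j\le 2n,\qquad (K_X+D)\cdot F\ge 0.
\]
Under $(3)$, each $C_j$ has $K_X\cdot C_j\ge 1$ (it cannot be $0$ since $(K_X+D)\cdot C_j<0$), so for $\alpha\in(0,\tfrac{1}{2n+1})$,
\[
(K_X+\alpha D)\cdot C=(1-\alpha)K_X\cdot C+\alpha(K_X+D)\cdot C\ \ge\ \sum_j a_j\bigl((1-\alpha)-2n\alpha\bigr)\ =\ \bigl(1-(2n+1)\alpha\bigr)\sum_j a_j\ \ge 0,
\]
with equality forcing all $a_j=0$ and $K_X\cdot C=(K_X+D)\cdot C=0$, which $(3)$ excludes. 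Thus $K_X+\alpha D$ is strictly nef, hence big (since $K_X$ is), hence ample by the base-point-free theorem for the klt pair $(X,\alpha D)$. The constant $\tfrac{1}{2n+1}$ comes directly from the bound $2n$ in the lc cone theorem together with $K_X\cdot C_j\ge 1$; no adjunction, no stratification, no induction is needed.
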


The proofs of these results rely on the cone theorem and on the base
point free theorem. For the technical details see Section
\ref{General}.

Remarkably, in the positive scalar curvature case the
existence of such uniform bounds is obstructed for both small and
large cone-angles, see Examples \ref{ex1} and \ref{ex2}.
Nevertheless, we are able to quantify this obstruction when
$-K_{X}-D$ and $-K_{X}$ are ample. For small cone-angles we obtain
that this obstruction depends \emph{only} on the self-intersection
$(-K_{X}-D)^{n}$, compare with Example \ref{ex1}. More precisely, we
can state the following.

\begin{theorem}\label{logf}
Let $\bb$ be a set of pairs $(X,D)$ such that $X$ is a smooth
projective variety of dimension $n$ and $D$ is a reduced effective
divisor with simple normal crossing support such that $-(K_{X}+D)$
is ample. The following are equivalent.
\begin{enumerate}
\item There exists a positive number $\alpha_{0}<1$ such that
$-(K_{X}+\alpha D)$ is ample for any $\alpha\in(\alpha_{0},1]$ and
any $(X,D)\in\bb$.
\item There exists a positive integer $M_{0}$ such that $(-(K_{X}+D))^{n}\leq M_{0}$ for any $(X,D)\in\bb$.
\end{enumerate}
\end{theorem}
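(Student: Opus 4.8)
The plan is to translate condition (1) into a uniform lower bound for a nef threshold attached to each pair $(X,D)$, and then to invoke boundedness of Fano pairs in both directions. Fix $(X,D)$ with $-(K_X+D)$ ample and write $-(K_X+\alpha D)=-(K_X+D)+(1-\alpha)D$ for $\alpha\in[0,1]$. For a curve $C$ with $D\cdot C\geq0$ one has $-(K_X+\alpha D)\cdot C>0$ automatically, while for a curve $C$ with $D\cdot C<0$ (necessarily contained in $\supp D$) one has $-(K_X+\alpha D)\cdot C>0$ if and only if $1-\alpha<\tfrac{-(K_X+D)\cdot C}{-D\cdot C}$. Since $-(K_X+D)$ is ample, the cone theorem shows $\overline{NE}(X)$ is a polyhedral cone $\sum_{i=1}^{r}\rr_{\geq0}[C_i]$ spanned by rational curves with $0<-(K_X+D)\cdot C_i\leq 2n$, so $-(K_X+\alpha D)$ is ample if and only if it is positive on each $C_i$. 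Setting
\bdism
\delta(X,D):=\min\Bigl\{\,\tfrac{-(K_X+D)\cdot C_i}{-D\cdot C_i}\ :\ 1\leq i\leq r,\ D\cdot C_i<0\,\Bigr\}\in(0,+\infty]
\edism
(the empty minimum being $+\infty$), we get: for $\alpha_0<1$ the class $-(K_X+\alpha D)$ is ample for every $\alpha\in(\alpha_0,1]$ if and only if $1-\alpha_0\leq\delta(X,D)$. Hence (1) is equivalent to $\inf_{(X,D)\in\bb}\delta(X,D)>0$, and since each numerator lies in $\{1,\dots,2n\}$ this amounts to a uniform bound $-D\cdot C_i\leq N_0$ over $\bb$ and over the relevant extremal curves.

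For (1)$\Rightarrow$(2), take $\alpha_0$ as in (1) and set $\epsilon=1-\alpha_0>0$. For every $(X,D)\in\bb$ the $\rr$-divisor $-(K_X+(1-\epsilon)D)$ is ample, and since $X$ is smooth and $D$ is simple normal crossing the pair $(X,(1-\epsilon)D)$ is $\epsilon$-log canonical of dimension $n$. By the Borisov--Alexeev--Borisov theorem (boundedness of $\epsilon$-lc Fano pairs, due to Birkar) there is a bound, depending only on $n$ and $\epsilon$, for $\bigl(-(K_X+(1-\epsilon)D)\bigr)^n$. As adding the effective $\rr$-divisor $\epsilon D$ does not decrease the volume, $\bigl(-(K_X+D)\bigr)^n\leq\bigl(-(K_X+(1-\epsilon)D)\bigr)^n$, and taking $M_0$ to be the integer ceiling of that bound gives (2).

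For (2)$\Rightarrow$(1), each $(X,D)\in\bb$ is now a log canonical log Fano pair of dimension $n$ with reduced boundary and $\vol\bigl(-(K_X+D)\bigr)\leq M_0$, so by boundedness of log Fano pairs of bounded volume (again due to Birkar, in the log canonical setting) there is a morphism $(\mathcal X,\mathcal D)\to T$ with $T$ of finite type, with $-m(K_{\mathcal X/T}+\mathcal D)$ relatively very ample for a fixed $m$, and with every member of $\bb$ occurring as a fibre. The extremal curves $C_i$ above have $-(K_X+D)\cdot C_i\leq 2n$, hence bounded degree for this polarization, hence form a bounded family of curves; their intersection numbers with $\mathcal D$ --- a divisor with a bounded number of components of bounded degree --- are therefore bounded, so $-D\cdot C_i\leq N_0$ uniformly. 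Thus $\delta(X,D)\geq 1/N_0$ for all $(X,D)\in\bb$, and (1) holds with $\alpha_0=1-1/N_0$.

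The substantive input, which I expect to be the main obstacle, is the boundedness of Fano-type pairs used above: the volume bound for $\epsilon$-lc Fanos in the first implication, and boundedness of bounded-volume log canonical Fano pairs in the second. The log canonical generality is unavoidable in (2)$\Rightarrow$(1): for an individual pair one can always lower the boundary coefficient $1$ to some $1-\epsilon_{(X,D)}$, but $\epsilon_{(X,D)}$ need not be bounded away from $0$, so $\bb$ cannot be replaced by an $\epsilon$-lc family; and once the volume hypothesis is dropped the log canonical family is genuinely unbounded, exactly as in Example~\ref{ex1}.
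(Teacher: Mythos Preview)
Your overall strategy works, but it differs substantially from the paper's and leans on much deeper results. For $(1)\Rightarrow(2)$ the two proofs are close in spirit: you invoke BAB for $\epsilon$-lc Fanos, while the paper uses the earlier theorem of M\textsuperscript{c}Kernan (Theorem~\ref{mck}) bounding the anti-log-canonical volume of klt log Fanos of fixed Cartier index; fixing a rational $\alpha\in(\alpha_0,1)$ makes $(X,\alpha D)$ klt of bounded index, and either input suffices. For $(2)\Rightarrow(1)$ the paper avoids your boundedness-of-families machinery entirely. It applies the effective Koll\'ar--Matsusaka theorem (Theorem~\ref{matsu}) directly: with $L=-(K_X+D)$ ample and $B=K_X+(n+1)L$ nef (Lemma~\ref{lem1}), one has $L^{n-1}\cdot B<nL^n$ and $L^{n-1}\cdot H<c(n)(n+1)L^n$ since $D$ is effective, so every constant in Theorem~\ref{matsu} is controlled by $L^n$ alone. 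This produces an explicit $M=a(n)(L^n)^{b(n)}$ with $ML-B$ very ample, hence $-(K_X+\tfrac{M}{M+1}D)$ ample, and $\alpha_0=\tfrac{M}{M+1}$ works. In particular your closing remark that ``the log canonical generality is unavoidable in $(2)\Rightarrow(1)$'' is not borne out: the paper never places the reduced-boundary pairs into a bounded family and never needs any lc boundedness statement. What the paper's route buys is elementarity and effectivity (yielding the explicit corollary immediately after the proof); what your route buys, if one grants the Birkar-type input, is a more conceptual picture via moduli---though your cone-theoretic threshold $\delta(X,D)$ is not used in either direction of the paper's argument.
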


We can also threat the case of cone-angles close to $2\pi$. In this
case the obstruction depends \emph{only} on the intersection number
$D\cdot (-K_{X})^{n-1}$, compare with Example \ref{ex2}.

\begin{theorem}\label{logf2}
Let $\dd$ be a set of pairs $(X,D)$ such that $X$ is a smooth
projective Fano variety of dimension $n$ and $D$ is a reduced
effective divisor with simple normal crossing support. The following
are equivalent.
\begin{enumerate}
\item There exists a positive number $\alpha_{0}<1$ such that
$-(K_{X}+\alpha D)$ is ample for any $\alpha\in[0,\alpha_{0})$ and
any $(X,D)\in\dd$.
\item There exists a positive integer $M_{0}$ such that $D\cdot(-K_{X})^{n-1}\leq M_{0}$ for any $(X,D)\in\dd$.
\end{enumerate}
\end{theorem}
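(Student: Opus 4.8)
\emph{Plan.} Both implications rest on the boundedness of smooth Fano varieties of a fixed dimension (Koll\'ar--Miyaoka--Mori): the smooth Fano $n$-folds form a bounded family, so in particular there is a constant $c(n)$ with $(-K_X)^n\le c(n)$ for every smooth Fano $n$-fold $X$.

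For $(1)\Rightarrow(2)$ (the easy direction) I would fix some $\alpha_1\in(0,\alpha_0)$, so that $-(K_X+\alpha_1 D)$ is ample for every $(X,D)\in\dd$. Pairing this ample class with $(-K_X)^{n-1}$, which up to a positive multiple is the class of a complete-intersection curve and therefore pairs positively with every ample divisor, gives $0<(-(K_X+\alpha_1 D))\cdot(-K_X)^{n-1}=(-K_X)^n-\alpha_1\,D\cdot(-K_X)^{n-1}$; hence $D\cdot(-K_X)^{n-1}<(-K_X)^n/\alpha_1\le c(n)/\alpha_1$, and any integer $M_0\ge c(n)/\alpha_1$ works.

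For $(2)\Rightarrow(1)$ I would first use the Cone Theorem to turn ampleness into a finite test, then boundedness to make it uniform. For $X$ Fano, $\overline{NE}(X)$ is spanned by finitely many classes of rational curves $C$ with $0<-K_X\cdot C\le n+1$, and by Kleiman's criterion $-(K_X+\alpha D)$ is ample exactly when $-K_X\cdot C>\alpha\,D\cdot C$ for each such extremal $C$. This is automatic when $D\cdot C\le 0$; when $D\cdot C>0$ it holds as soon as $\alpha<1/(D\cdot C)$, since $-K_X\cdot C\ge 1$. So it suffices to bound $D\cdot C$ by a constant $E=E(n,M_0)$, uniformly over $(X,D)\in\dd$ and over extremal rational curves $C$ on $X$ of anticanonical degree at most $n+1$; then $\alpha_0=1/(E+1)$ works, the value $\alpha=0$ being covered by ampleness of $-K_X$. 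To get such an $E$: since $D\cdot(-K_X)^{n-1}\le M_0$ and $-K_X\cdot C\le n+1$, both $D$ and $C$ are cycles of bounded $(-K_X)$-degree, and as the smooth Fano $n$-folds form a bounded family so do such bounded-degree cycles on them; hence the triples $(X,D,C)$ form a bounded family, on which the intersection number $D\cdot C$ takes only finitely many values, and $E$ is the largest of these.

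The main obstacle is exactly this last step: hypothesis $(2)$ controls only the single number $D\cdot(-K_X)^{n-1}$, and upgrading it to uniform control of $D\cdot C$ for \emph{all} the low anticanonical-degree extremal curves forced on us by the Cone Theorem genuinely requires the boundedness of the ambient Fano varieties---with, say, unbounded Picard number or unbounded degree no such uniform bound, hence no uniform $\alpha_0$, need exist, which is the phenomenon behind Example \ref{ex2}.
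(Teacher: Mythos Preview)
Your argument for $(1)\Rightarrow(2)$ is essentially identical to the paper's: pair the nef class $-(K_X+\alpha D)$ with $(-K_X)^{n-1}$ and invoke the Koll\'ar--Miyaoka--Mori bound on $(-K_X)^n$.

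For $(2)\Rightarrow(1)$ you take a genuinely different route. The paper applies the effective Koll\'ar--Matsusaka theorem (Theorem~\ref{matsu}) with $L=-K_X$ and $B=K_X+D+2nL$, which is nef by the cone theorem; the intersection numbers entering the Matsusaka bound are controlled by $(-K_X)^n$ (via Theorem~\ref{kmm}) and by the hypothesis $D\cdot(-K_X)^{n-1}\le M_0$, yielding an explicit integer $M=M(n,M_0)$ with $(M+1)(-K_X)-D$ ample, hence $\alpha_0=1/(M+1)$. Your approach instead exploits the polyhedrality of $\overline{NE}(X)$ for Fano $X$ to reduce ampleness to finitely many inequalities $-K_X\cdot C>\alpha\,D\cdot C$, and then bounds $D\cdot C$ by a soft boundedness argument: the triples $(X,D,C)$ with $X$ a smooth Fano $n$-fold, $D$ of bounded anticanonical degree, and $C$ an extremal curve of anticanonical degree at most $n+1$ sit in a finite-type family, on which $D\cdot C$ takes finitely many values. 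This is correct, though the last step is only sketched; to make it airtight you would embed all such $X$ in a fixed $\mathbb{P}^N$ via a uniform multiple of $-K_X$ (possible by Matsusaka and KMM), then use finiteness of the relevant relative Hilbert or Chow schemes and local constancy of intersection numbers in flat families. The trade-off is clear: the paper's route produces explicitly computable constants $a(n),b(n)$ (recorded in the corollary following the proof), while your route is more conceptual but non-effective.
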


The strategies used in the proofs of Theorems \ref{logf} and
\ref{logf2} are completely different from the techniques employed in
the negative scalar curvature case. The proofs of these theorems
rely on effective very ampleness results as well as on boundness of
Fano varieties. For the technical details see Section \ref{Fano}.\\

We now recall some of the key features of Tian's program \cite{Tian}
and their relation with the theme of positivity in algebraic
geometry. Incomplete K\"ahler-Einstein metrics are geometrically
very interesting. Unlike the complete case, they have a much more
interesting moduli theory as they are not subject to Yau's
generalized Schwartz lemma. Furthermore, in Tian's original program
\cite{Tian}, they should also be used to better understand the
complete K\"ahler-Einstein metrics previously constructed on $X$ and
$X \backslash D$. Recall that T. Aubin and S.-T. Yau constructed
negatively curved K\"ahler-Einstein metrics on $X$ when $K_{X}$ is
ample, see \cite{Aubin} and \cite{Yau}. Similarly, there are many
existence results for complete negatively curved K\"ahler-Einstein
metrics on $X \backslash D$ under certain positivity assumption for
$K_{X}+D$, see for example \cite{TianY}, \cite{Yau2}, \cite{Wu1},
\cite{Wu2}. For precise definitions and details see Section
\ref{Kahler-Einstein}. Now, one should be able to recover these
metrics by taking limits of K\"ahler-Einstein metrics with cone-edge
singularities as the cone angle goes to $2\pi$ or $0$. Moreover, the
study of this limiting behavior should give a better understanding
of the asymptotic behavior of the complete K\"ahler-Einstein metrics
on $X \backslash D$.

K\"ahler-Einstein metrics with cone-edge singularities are also very
interesting from a \emph{positivity} point of view. Their existence
implies strong positivity properties for $\rr$-divisors of the form
$K_{X}+\alpha D$ or $-(K_{X}+\alpha D)$ for some $\alpha\in(0, 1)$.
We refer to Section \ref{Kahler-Einstein} for the precise
definitions, results and to Proposition \ref{luca} for the necessary
condition for the existence of such metrics. Thus, the existence of
these metrics appears to be associated with the positivity of the
log-canonical or anti-log-canonical divisor of a Kawamata
log-terminal (klt) pair rather than a log-canonical (lc) pair. This
fact turns out to be extremely useful since the \emph{base-point
free} theorem is currently known for big, nef and klt pairs only. We
exploit systematically this point in Section \ref{General}. For the
definitions of klt and lc pairs, the statement of the base-point
free theorem and the results from algebraic geometry used in the
rest of this work we refer to Section \ref{Geometry}.

To sum up, it is important to study which positivity properties are
preserved, lost or gained by passing from a lc pair $(X, D)$ to a
klt pair $(X, \alpha D)$, $\alpha\in(0, 1)$, and viceversa. We find
that these questions can be approached by studying the behavior of
certain positivity thresholds, see Sections \ref{General},
\ref{Modulo} and \ref{Fano}. For example, given a pair $(X, D)$ with
$K_{X}+D$ big and nef the key object to study is the so called
\emph{nef threshold} $r(X, D)$, see Definition \ref{threshold} and
the results in Section \ref{General}. All the questions we address
here are explicitly formulated throughout the text, see Questions
\ref{q1}, \ref{q14}, \ref{q3} and \ref{qfano}. We provide complete
solutions to all of them through Theorems \ref{tian}, \ref{tian2},
Example \ref{ex1} and Theorems \ref{logf}, \ref{logf2}. These
results seem to have interest of their own. Moreover, they have
applications beyond the study of K\"ahler-Einstein metrics, see for example \cite{DD15}.

The paper is organized as follows. In Section \ref{Kahler-Einstein},
we recall the theory of K\"ahler-Einstein metrics on
quasi-projective varieties. We discuss the existence of both
complete and incomplete K\"ahler-Einstein metrics of negative and
positive scalar curvature. It is shown how the existence of such
metrics motivates a number of natural positivity questions. These
questions are thoroughly studied in Sections \ref{General},
\ref{Modulo} and \ref{Fano}. In Section \ref{Geometry}, we state
some fundamental results from the theory of the minimal model which
we repeatedly use in this work. In Section \ref{Modulo}, we study
the notion of ampleness for line bundles on quasi-projective
varieties. This analysis is important for the study of negatively
curved K\"ahler-Einstein metrics beyond the cone-edge asymptotic,
see Corollary \ref{semipositive} and the remarks which follows.
Finally, all the results are motivated and supported with examples. \\

\noindent\textbf{Acknowledgements}. The first named author would
like to express his gratitude to Professor J\'anos Koll\'ar for his
constant support and for very useful comments on a preliminary
version of this work. He also would like to thank Yoshinori Gongyo
and Chenyang Xu for some useful suggestions. The second named author
would like to thank Professor Mark Stern for bringing \cite{Mumford}
to his attention and for several useful discussions and comments.

\section{K\"ahler-Einstein metrics on quasi-projective
varieties}\label{Kahler-Einstein}

In this section we recall and collect some generalities regarding
the existence of both complete and incomplete K\"ahler-Einstein
metrics on quasi-projective varieties. In particular, we discuss the
\emph{necessary} conditions for the existence of such metrics. This
analysis uses the language and the basic theory of \emph{K\"ahler
currents} on K\"ahler manifolds \cite{Demailly2}. The results
obtained motivate the positivity questions addressed in the rest of
this work.

Let us start by fixing notation and by giving some definitions. Let
$X$ be a $n$-dimensional projective manifold and $D=\sum_{i}D_{i}$
be a reduced simple normal crossing divisor. Concretely, this simply
means that the irreducible complex hypersurfaces $D_{i}$ are smooth
and that they intersect transversally. Given the pair $(X, D)$, let
us recall the notion of a K\"ahler metric with Poincar\'e type
singularities along $D$. A smooth K\"ahler metric $\hat{\omega}$ on
$X\backslash D$ is said to have Poincar\'e singularities along the
$D_{i}$'s if for any point $p\in D$ and coordinate neighborhood
$(\Omega;z_{1},...,z_{n})$ centered at $p$ for which

\begin{align}\notag
D_{|_{\Omega}}=\left\{z_{1}\cdot...\cdot z_{k}=0\right\}
\end{align}

then $\hat{\omega}$ is \emph{quasi-isometric} in $\Omega$ to the
following model metric
\begin{align}\label{model1}
\omega_{0}=\sqrt{-1}\bigg(\sum^{k}_{i=1}\frac{dz_{i}\wedge
d\overline{z}_{i}}{|z_{i}|^{2}\log^{2}|z_{i}|^{2}}+\sum^{n}_{i=k+1}dz_{i}\wedge
d\overline{z}_{i}\bigg).
\end{align}

It is easy to construct K\"ahler metrics with Poincar\'e
singularities along a divisor.

\begin{example}
Let $\omega$ be a K\"ahler metric on $X$. For simplicity let $D$ be
a smooth reduced divisor. Choose an Hermitian metric $\|\cdot\|$ on
$\mathcal{O}_{X}(D)$. Let $s\in H^{0}(X, \mathcal{O}(D))$ be a
defining section for the divisor $D$. Then for $T>0$ big enough
\begin{align}\notag
\hat{\omega}=T\omega-\sqrt{-1}
\partial\overline{\partial}\log\log^{2}\|s\|^{2}
\end{align}
defines K\"ahler metric on $X\backslash D$ with Poincar\'e
singularities along $D$.
\end{example}

Note that any metric on $X\backslash D$ which is quasi-isometric to
the model given in \ref{model1} is necessarily a complete Riemannian
metric. In fact, it suffices to show that the boundary divisor $D$
is ``metrically at infinity'', which is clearly the case here since
\begin{align}\notag
\int^{l}_{0}\frac{dr}{r|\log{r}|}=-\log{\log{\frac{1}{r}}}\Big{|}^{l}_{0}=\infty.
\end{align}

Moreover, such metrics have finite volume as one can easily see
computing
\begin{align}\notag
\int^{2\pi}_{0}\int^{l}_{0}\frac{rdrd\theta}{r^{2}(\log{r})^{2}}<\infty.
\end{align}

If we now restrict our attention to K\"ahler metrics on $X\backslash
D$ which have Poincar\'e singularities along $D$ then they can also
be regarded as \emph{global} objects on $X$ as \emph{currents}. For
the basic definitions and properties of complex currents we refer to
the books \cite{Griffiths} and \cite{Demailly2}.

\begin{proposition}\label{exstension}
Let $\hat{\omega}$ be a K\"ahler metric on $X\backslash D$ with
Poincar\'e singularities along D. Then $\hat{\omega}$ defines a
K\"ahler current on $X$.
\end{proposition}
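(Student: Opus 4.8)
The plan is to show that the current $\hat\omega$, viewed on all of $X$, is closed (in the sense of currents), of bidegree $(1,1)$, and dominates a fixed smooth Kähler form $\varepsilon\,\omega_X$ for some $\varepsilon>0$ — these three facts together are precisely the statement that $\hat\omega$ is a Kähler current. The local nature of the model metric $\omega_0$ in \eqref{model1} is what makes this tractable: all the analytic content is contained in understanding $\omega_0$ near a polydisk, and the quasi-isometry hypothesis transfers the conclusions to $\hat\omega$.

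First I would check that $\hat\omega$ defines a current of order zero on $X$, i.e.\ that its coefficients are locally integrable across $D$. Away from $D$ there is nothing to prove since $\hat\omega$ is a genuine smooth Kähler form there. Near a point $p\in D$ with $D=\{z_1\cdots z_k=0\}$, the quasi-isometry to $\omega_0$ bounds the coefficients of $\hat\omega$ by a constant times $1/(|z_i|^2\log^2|z_i|^2)$ in the $dz_i\wedge d\bar z_i$ directions, and $\int_0^l \frac{r\,dr}{r^2(\log r)^2}<\infty$ — exactly the finite-volume computation already recorded in the excerpt — shows these are $L^1_{\mathrm{loc}}$. Hence $\hat\omega$ extends to a well-defined positive $(1,1)$-current $T$ on $X$ by declaring $T(\varphi)=\int_{X\setminus D}\hat\omega\wedge\varphi$ for test forms $\varphi$; positivity is immediate since $\hat\omega$ is a Kähler form on $X\setminus D$.

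Next I would verify that $T$ is \emph{closed}, i.e.\ $dT=0$ as a current on $X$. One knows $d\hat\omega=0$ on $X\setminus D$, so the only issue is a possible boundary contribution along $D$. The standard device is to introduce cutoff functions $\chi_\delta$ vanishing in a $\delta$-neighborhood of $D$ and equal to $1$ outside a $2\delta$-neighborhood, built from $\log\log$ of the defining sections so that $\|d\chi_\delta\|$ is controlled, and to estimate $\int_{X\setminus D}\chi_\delta'\,d\hat\omega\wedge(\cdots)$; using again that the $\log$-type singularity of $\omega_0$ is mild and that $d\log\log^2\|s\|^2$ is $\hat\omega$-bounded, these terms go to zero as $\delta\to 0$. (Equivalently, one can use a Stokes/capacity argument: a closed positive current with $L^1_{\mathrm{loc}}$ coefficients across a divisor, which is closed off the divisor, has no mass concentrated on the divisor because $D$ has measure zero and the current places no mass there.) This closedness step — controlling the boundary terms across $D$ — is the one genuinely analytic point and is where I expect the main obstacle to lie; everything else is bookkeeping.

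Finally, the Kähler-current inequality $T\geq \varepsilon\,\omega_X$ in the sense of currents is again local and follows directly from quasi-isometry: on any coordinate polydisk, $\omega_0\geq c\sum_i dz_i\wedge d\bar z_i$ (the Poincaré factors $1/(|z_i|^2\log^2|z_i|^2)$ are \emph{bounded below} on a punctured polydisk by a positive constant, since $|z_i|^2\log^2|z_i|^2\to 0$), hence $\hat\omega\geq c'\,\omega_X|_{X\setminus D}$ pointwise for a uniform $c'>0$ after shrinking and using compactness of $X$ to patch the local constants. Passing to currents, $T-\varepsilon\omega_X$ is represented by a positive form on $X\setminus D$, so it is a positive current on $X$. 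Combining the three steps, $T$ is a closed positive $(1,1)$-current dominating a Kähler form, i.e.\ a Kähler current, which completes the proof.
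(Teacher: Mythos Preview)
Your proof is correct and follows the same three-step structure as the paper: extend $\hat\omega$ across $D$ using finite mass (the finite-volume computation), verify closedness of the extension, and deduce the K\"ahler-current lower bound from the quasi-isometry to $\omega_0$. The only difference is that where you carry out the extension and closedness step by hand via cutoff functions, the paper simply invokes the known extension theorem for closed positive currents with locally finite mass near an analytic set (Th\'eor\`eme~1.1 in \cite{Sibony}); your argument is effectively a sketch of that result in this special case, so the two proofs coincide in substance.

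One small remark: your parenthetical alternative (``a closed positive current with $L^1_{\mathrm{loc}}$ coefficients\ldots has no mass concentrated on the divisor'') conflates two issues. Absence of mass of $T$ on $D$ follows from $L^1_{\mathrm{loc}}$ coefficients, but that alone does not imply $dT=0$ across $D$; the cutoff estimate is what actually forces the boundary term to vanish. Your main argument via $\chi_\delta$ is the correct one.
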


\begin{proof}
Recall that in order to extend a positive closed $(1, 1)$-current
across an analytic set it suffices to check that it has finite mass
near it. For a proof of this important fact see Th\'eor\`eme 1.1 in
\cite{Sibony} and the bibliography there. Now, this amounts to the
fact that the Poincar\'e metric has finite volume. Finally,
$\hat{\omega}$ defines a K\"ahler current since the condition given
in \ref{model1} ensures that it dominates a positive $(1, 1)$-form
on $X$.
\end{proof}

\begin{remark}
It seems that the idea of bounding forms on quasi-projective variety
by Poincar\'e type metrics goes at least back to Cornalba and
Griffiths \cite{Cornalba}. Moreover, many of the extension
properties of such forms were also explained by Mumford
\cite{Mumford}.
\end{remark}

We now return to the existence problem for K\"ahler-Einstein metric
with Poincar\'e type singularities. Note that, classical theorems in
global Riemannian geometry \cite{Petersen}, can be used to rule out
the existence of such Einstein metrics when the scalar curvature is
non-negative.

It is easy to construct K\"ahler-Einstein metrics with Poincar\'e
singularities along a divisor and negative scalar curvature.

\begin{example}\label{exampleR}
Let $\Sigma$ be a finite volume hyperbolic Riemannian surface and
let $\overline{\Sigma}=\Sigma\cup\{p_{1}, ..., p_{k}\}$ be its
natural compactification. Then the hyperbolic metric on $\Sigma$ is
a K\"ahler-Einstein metric with Poincar\'e singularities along the
$p_{i}$'s.
\end{example}

Note that, in Example \ref{exampleR}, the \emph{logarithmic}
canonical bundle
\begin{align}\notag
K_{\overline{\Sigma}}+P, \quad P:=\{p_{1}, ..., p_{k}\},
\end{align}
associated to the pair $(\overline{\Sigma}, P)$ is an \emph{ample}
divisor on $\overline{\Sigma}$. Recall that $\Sigma$ admits a
hyperbolic metric if and only if $2g(\overline{\Sigma})-2+k>0$, see
for example \cite{Forster}. For the basic results in logarithmic
geometry we refer to \cite{Iitaka}.

Now, the positivity of the logarithmic canonical line bundle is not
unexpected. In fact, given a logarithmic pair $(X, D)$ with $D$ a
simple normal crossing divisor, the existence of a K\"ahler-Einstein
metric with negative scalar curvature on $X\backslash D$ and
Poincar\'e singularities along $D$ is guaranteed under the condition
that $K_{X}+D$ is an ample divisor; see the works of R. Kobayashi
\cite{kobayashi} and Cheng-Yau \cite{Cheng}. Let us briefly discuss
the proof of such results. This existence problem reduces to the
study of a complex Monge-Amp\`ere equation of the form
\begin{align}\label{degenerate}
(\omega+\sqrt{-1}\partial\overline{\partial}\varphi)^{n}=
e^{\varphi}\Psi
\end{align}
where
\begin{align}\notag
\Psi=\frac{\Omega}{\prod_i\|s_{i}\|^{2}(\log\|s_{i}\|^{2})^{2}}
\end{align}
for some globally defined volume form $\Omega$ on $X$ and where
$\omega$ is a Carlson-Griffiths \cite{Carlson} K\"ahler metric on
$X\backslash D$. More precisely, $\omega$ is a complete K\"ahler
metric on $X\backslash D$ with Poincar\'e singularities along $D$
which is obtained as minus the Ricci curvature of $\Psi$. As
explained in \cite{Carlson}, if we assume the log-canonical divisor
to be ample, this is always possible by appropriately choosing
$\Omega$ and the Hermitian metrics $\|\cdot\|_{i}$ on
$\mathcal{O}_{X}(D_{i})$. Now, as proved in \cite{kobayashi} and
\cite{Cheng} the solution $\omega_{\varphi}$ of \ref{degenerate} has
Poincar\'e type singularities. Thus, the Poincar\'e-Lelong formula
\cite{Griffiths} gives the following.

\begin{remark}\label{motivation}
The K\"ahler current $\omega_{\varphi}$ solving \ref{degenerate}
satisfies a distributional equation of the form
\begin{align}\notag
Ric_{\omega_{\varphi}}-\sum_{i}[D_{i}]=-\omega_{\varphi},
\end{align}
where by $[D]$ we indicate the current of integration along $D$.
\end{remark}

It is easy to construct Carlson-Griffiths type metrics under
slightly different positivity conditions. For example, let us
consider a collection of positive numbers $\alpha_{i}\in(0, 1]$ and
assume the \emph{twisted} log-canonical divisor
\begin{align}\notag
K_{X}+\sum_{i}\alpha_{i}D_{i}
\end{align}
to be ample on $X$. Then, a simple modification of the original
argument given in \cite{Carlson} can be used to construct a K\"ahler
metric on $X\backslash D$ with Poincar\'e singularities along $D$
which is in the cohomology class of the $\rr$-divisor
$K_{X}+\sum_{i}\alpha_{i}D_{i}$. As in the untwisted case, one can
then try to deform this background metric to a complete
K\"ahler-Einsten metric. Nevertheless, this approach seems to not
work in this context. In fact, K\"ahler-Einstein metrics with
Poincar\'e type singularities are complete and therefore quite rigid
because of Yau's generalized Schwartz lemma. This is why, in the
definition below, we limit ourselves to the case where all the
$\alpha_{i}$'s are identically equal to one. Compare with Definition
\ref{TianDef} below.

\begin{definition}\label{Tian-Yau-Wu}
A K\"ahler current $\hat{\omega}$ with Poincar\'e singularities
along the $D_{i}$'s is called K\"ahler-Einstein if it satisfies a
distributional equation of the form
\begin{align}\label{Tian-Wu}
Ric_{\hat{\omega}}-\sum_{i}[D_{i}]=-\hat{\omega}
\end{align}
where by $[D]$ we indicate the current of integration along $D$.
\end{definition}

We can now derive the necessary condition for the existence of a
K\"ahler-Einstein metric with Poincar\'e singularities.

\begin{proposition}\label{poincare}
Let $\hat{\omega}$ be a K\"ahler-Einstein metric with Poincar\'e
singularities as in \ref{Tian-Wu}, then the divisor
$K_{X}+\sum_{i}D_{i}$ is ample.
\end{proposition}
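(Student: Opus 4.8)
The plan is to use the K\"ahler current $\hat{\omega}$ to produce a K\"ahler current in the cohomology class of $K_{X}+\sum_{i}D_{i}$, and then to invoke a Nakai--Moishezon / Demailly-type criterion for currents to conclude ampleness. First I would take the distributional equation \eqref{Tian-Wu} and read it cohomologically. By Proposition \ref{exstension}, $\hat{\omega}$ extends to a K\"ahler current on $X$, so it has a well-defined cohomology class $[\hat{\omega}]\in H^{1,1}(X,\rr)$. The Poincar\'e--Lelong formula identifies the class of $\sum_i[D_i]$ with $c_1(\oo_X(D))$, and by definition $-Ric_{\hat{\omega}}$ represents $c_1(K_X)$ (this is the content of the discussion preceding Remark \ref{motivation}, applied to $\hat{\omega}$ in place of $\omega_{\varphi}$; the curvature-current identity holds at the level of currents of finite mass, hence in cohomology). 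Rearranging \eqref{Tian-Wu} gives
\begin{align}\notag
[\hat{\omega}] = -Ric_{\hat{\omega}} + \sum_i[D_i] = c_1(K_X) + c_1(\oo_X(D)) = c_1(K_X+D)
\end{align}
in $H^{1,1}(X,\rr)$.

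Next I would upgrade this cohomological statement to positivity. Since $\hat{\omega}$ is a K\"ahler current, by definition it dominates a strictly positive smooth $(1,1)$-form $\varepsilon\omega_0$ for some fixed K\"ahler form $\omega_0$ on $X$ and some $\varepsilon>0$; that is, $\hat{\omega}-\varepsilon\omega_0 \geq 0$ as a current. Therefore the class $c_1(K_X+D)$ contains a K\"ahler current with analytic singularities (after a Demailly regularization, if one wants the singularities to be analytic — but for the argument below even a mere K\"ahler current suffices). One then concludes that $K_X+D$ is big. To get the full ampleness statement, I would argue that the restriction of $\hat\omega$ to any subvariety $V\subset X$ still has finite mass and still dominates a K\"ahler form on a Zariski-open subset of $V$, so that $(K_X+D)^{\dim V}\cdot V>0$ for every irreducible $V$; combined with nefness (which follows since $[\hat\omega]$ is a limit of K\"ahler classes, being a K\"ahler current class) this gives ampleness by the Nakai--Moishezon criterion. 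Alternatively, and more cleanly, one can cite the fact that a line bundle whose first Chern class contains a K\"ahler current that is smooth and strictly positive on the complement of a divisor — which is exactly our situation, as $\hat\omega$ is a genuine K\"ahler \emph{metric} on $X\setminus D$ — is ample; this is essentially Kodaira's embedding theorem applied after resolving the (mild) singularities along $D$, together with the Poincar\'e-growth control near $D$.

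The main obstacle is the passage from "K\"ahler current" to "ample", i.e. controlling the behavior along $D$ carefully enough: a K\"ahler current in a class only gives bigness in general, not ampleness, and one must genuinely use that $\hat{\omega}$ is a smooth complete metric away from $D$ with prescribed Poincar\'e asymptotics — not merely a positive current — to rule out that some subvariety $V$ has $(K_X+D)^{\dim V}\cdot V=0$. The cleanest route is to first establish bigness and nefness as above, then treat the equality-case subvarieties directly: on such a $V$ the restricted metric would have to degenerate, contradicting the quasi-isometry to the model \eqref{model1} which forces a definite amount of positivity transversally to $D$ and along $V\cap(X\setminus D)$. I would carry out: (i) the cohomological identification $[\hat\omega]=c_1(K_X+D)$; (ii) nefness from the K\"ahler-current property; (iii) positivity of all intersection numbers $(K_X+D)^{\dim V}\cdot V$ via finite mass plus uniform lower bound of $\hat\omega$ on compact subsets of $X\setminus D$; (iv) conclude by Nakai--Moishezon. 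Step (iii) is where the real work lies.
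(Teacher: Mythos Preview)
Your cohomological identification $[\hat\omega]=c_1(K_X+D)$ and the deduction of bigness are fine and match the paper. The gap is in the passage to ampleness. Your claim that ``nefness follows since $[\hat\omega]$ is a limit of K\"ahler classes, being a K\"ahler current class'' is incorrect: containing a K\"ahler current only places the class in the interior of the pseudo-effective cone, i.e.\ in the big cone, and big classes need not be nef. So you cannot feed Nakai--Moishezon the nefness for free. Moreover, your step (iii) is left as ``where the real work lies'' and the sketch you give does not handle subvarieties $V\subset D$: for those, the uniform lower bound of $\hat\omega$ on compacta of $X\setminus D$ is vacuous, and the finite-mass property alone does not prevent $(K_X+D)^{\dim V}\cdot V=0$.

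The paper's route avoids Nakai--Moishezon entirely and uses instead the key observation you are missing: the Poincar\'e model \eqref{model1} has \emph{zero Lelong numbers} everywhere, including along $D$ (the local potential behaves like $-\log\log\frac{1}{|z_i|^2}$, which is $o(\log\frac{1}{|z_i|})$). One then invokes Demailly's regularization theorem \cite{Demailly1}: a K\"ahler current $T\geq\varepsilon\omega$ with identically zero Lelong numbers can be approximated in cohomology by smooth closed $(1,1)$-forms $T_\delta$ with $T_\delta\geq -\delta\omega$ for arbitrarily small $\delta>0$. Taking $\delta<\varepsilon$ and adding a suitable multiple of a K\"ahler form in the kernel of the approximation yields a genuine smooth K\"ahler form in $c_1(K_X+D)$, hence ampleness by Kodaira. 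This bypasses both the nefness issue and the subvariety-by-subvariety analysis.
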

\begin{proof}
Since by assumption $\hat{\omega}$ satisfies \ref{Tian-Wu}, we have
that the cohomology class of the divisor $K_{X}+\sum_{i}D_{i}$ can
be represented by a K\"ahler current. By the structure of the
\emph{pseudo-effective} cone given in \cite{Demailly1}, we conclude
that $K_{X}+\sum_{i}D_{i}$ is a \emph{big} divisor. Next, we want to
show that $K_{X}+\sum_{i}D_{i}$ has to be an ample divisor. First, a
simple computation shows that the \emph{Lelong} numbers of
$\hat{\omega}$ are zero at any smooth point of $D$. Then a
regularization argument based on results of Demailly
\cite{Demailly1} can be used to show that $K_{X}+\sum_{i}D_{i}$ is
indeed an ample divisor.
\end{proof}

We now briefly discuss the existence problem for incomplete
K\"ahler-Einstein metrics on quasi-projective varieties. The most
natural kind of incomplete metrics appearing in K\"ahler geometry
seem to have \emph{cone-edge} singularities. The study of such
metrics was originally proposed by G. Tian in \cite{Tian}. Thus,
given a pair $(X, D)$ as above, let us recall the notion of a
K\"ahler metric with edge singularities along $D$. Let $\{D_{i}\}$
be the irreducible smooth components of $D$ and consider a
collection of positive numbers $\alpha_{i}\in(0, 1)$. A smooth
K\"ahler metric $\hat{\omega}$ on $X\backslash D$ is said to have
edge singularities of cone angles $2\pi(1-\alpha_{i})$ along the
$D_{i}$'s if for any point $p\in D$ and coordinate neighborhood
$(\Omega;z_{1},...,z_{n})$ centered at $p$ for which
\begin{align}\notag
D_{|_{\Omega}}=\left\{z_{1}\cdot...\cdot z_{k}=0\right\},
\end{align}
then $\hat{\omega}$ is \emph{quasi-isometric} in $\Omega$ to the
following model metric
\begin{align}\label{model}
\omega_{0}=\sqrt{-1}\bigg(\sum^{k}_{i=1}\frac{dz_{i}\wedge
d\overline{z}_{i}}{|z_{i}|^{2\alpha_{i}}}+\sum^{n}_{i=k+1}dz_{i}\wedge
d\overline{z}_{i}\bigg).
\end{align}

It is easy to construct K\"ahler metrics with edge singularity along
a divisor.

\begin{example}
Let $\omega$ be a K\"ahler metric on $X$. For simplicity let $D$ be
a smooth reduced divisor and choose a real number $\alpha\in(0, 1)$.
Choose an Hermitian metric $\|\cdot\|$ on $\mathcal{O}_{X}(D)$. Let
$s\in H^{0}(X, \mathcal{O}(D))$ be a defining section for the
divisor $D$. Then for $T>0$ big enough
\begin{align}\notag
\hat{\omega}=T\omega+\sqrt{-1}
\partial\overline{\partial}\|s\|^{2(1-\alpha)}
\end{align}
defines K\"ahler metric on $X\backslash D$ with edge singularities
of cone angle $2\pi(1-\alpha)$ along $D$.
\end{example}

As first observed in \cite{Jeffress}, $\hat{\omega}$ is indeed an
incomplete K\"ahler metric of finite volume on $X\backslash D$. Now,
because of the finite volume property, standard results in the
theory of currents \cite{Demailly2} imply that $\hat{\omega}$ can be
regarded as a \emph{K\"ahler current} on $X$ with \emph{singular
support} $D$. The fact that $\hat{\omega}$ defines a K\"ahler
current and not only a closed positive current follows from the
quasi-isometric condition given in \ref{model}.

We can now introduce the definition of a K\"ahler-Einstein metric
with edge singularities.

\begin{definition}[Tian \cite{Tian}]\label{TianDef}
A K\"ahler current $\hat{\omega}$ with edges of cone angles
$2\pi(1-\alpha_{i})$ along the $D_{i}$'s is called K\"ahler-Einstein
with curvature $\lambda$ if it satisfies the distributional equation
\begin{align}\label{Tian}
Ric_{\hat{\omega}}-\sum_{i}\alpha_{i}[D_{i}]=\lambda\hat{\omega}
\end{align}
where by $[D]$ we indicate the current of integration along $D$.
\end{definition}

A simple regularization argument based on results of Demailly
\cite{Demailly1} can now be used to derive the \emph{necessary}
condition for the existence of a K\"ahler-Einstein metric with edge
singularities.

\begin{proposition}\label{luca}
Let $\hat{\omega}$ be a K\"ahler-Einstein metric with edge
singularities as in \ref{Tian} with $\lambda=-1$  $(\lambda=1)$,
then the $\rr$-divisor $K_{X}+\sum_{i}\alpha_{i}D_{i}$
$(-K_{X}-\sum_{i}\alpha_{i}D_{i})$ is ample.
\end{proposition}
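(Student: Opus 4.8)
Here is the plan I would follow. This is the cone-edge analogue, with curvature $\lambda=\mp1$, of Proposition \ref{poincare}, and I would prove it along exactly the same lines, simply replacing the Poincar\'e model \ref{model1} by the edge model \ref{model}. Put $L:=K_{X}+\sum_{i}\alpha_{i}D_{i}$ when $\lambda=-1$ and $L:=-K_{X}-\sum_{i}\alpha_{i}D_{i}$ when $\lambda=1$; in either case $L$ is an $\rr$-divisor and the goal is to show that its class is a K\"ahler class, equivalently that $L$ is ample. The first step is purely cohomological. As recalled just before Definition \ref{TianDef}, the finite-volume property together with the quasi-isometry bound \ref{model} already forces $\hat{\omega}$ to extend to a \emph{K\"ahler current} on $X$. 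Taking cohomology classes in the distributional identity \ref{Tian}, and using the Poincar\'e--Lelong formula $[D_{i}]\in c_{1}(\oo_{X}(D_{i}))$ together with the fact that the Ricci current $Ric_{\hat{\omega}}$ represents $c_{1}(-K_{X})$ (its local weight $-\log\det\hat{g}$ is $L^{1}_{loc}$ near $D$, so it is $dd^{c}$-cohomologous to the curvature of a smooth metric on $-K_{X}$), one obtains $[\hat{\omega}]=c_{1}(L)$. Hence $c_{1}(L)$ is represented by a K\"ahler current, and by the description of the pseudo-effective cone in \cite{Demailly1} this already shows that $L$ is \emph{big}.

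The second step is to check that $\hat{\omega}$, regarded as a positive closed $(1,1)$-current on $X$, has vanishing Lelong numbers at every point. Off $D$ the current is a smooth K\"ahler form, so there is nothing to prove there. Near a point $p\in D$, in the coordinates of \ref{model} the model metric $\omega_{0}$ has the continuous, bounded local potential $\sum_{i\le k}(1-\alpha_{i})^{-2}|z_{i}|^{2(1-\alpha_{i})}+\sum_{i>k}|z_{i}|^{2}$, so $\omega_{0}$ has zero Lelong number at every point of the chart. The quasi-isometry bound gives $\hat{\omega}\le C\,\omega_{0}$ off $D$ (the trivial extension across $D$ adding no mass, as in Proposition \ref{exstension}), hence $\sigma_{\hat{\omega}}\le C\,\sigma_{\omega_{0}}$ for the trace measures, and by monotonicity of the Lelong number under such a domination we get $\nu(\hat{\omega},p)=0$. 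In particular the Lelong numbers vanish also at the crossing points of $D$.

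The last step upgrades bigness to ampleness, exactly as in Proposition \ref{poincare}: I would apply Demailly's regularization theorem \cite{Demailly1} to the K\"ahler current $\hat{\omega}$. The analytic sets along which the regularized currents are singular are contained in the sublevel sets $\{x:\nu(\hat{\omega},x)\ge c\}$, $c>0$, which are empty by the previous step; and since $\hat{\omega}$ is a \emph{strictly} positive current the loss of positivity in the regularization is controlled by the (vanishing) Lelong numbers. Hence $c_{1}(L)$ contains a smooth strictly positive form, i.e.\ it is a K\"ahler class, and since $X$ is projective, $L$ is ample. The whole argument is insensitive to the sign of $\lambda$, so the negatively and positively curved cases are handled simultaneously. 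The one point that genuinely requires care is this last step: one needs the precise form of Demailly's regularization guaranteeing that, in the presence of identically vanishing Lelong numbers, the regularized currents are at once smooth and strictly positive — this is where all the analytic input is concentrated.
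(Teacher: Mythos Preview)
Your argument is correct and follows precisely the template the paper itself uses for the Poincar\'e case in Proposition \ref{poincare}: extend $\hat{\omega}$ to a K\"ahler current, read off $[\hat{\omega}]=c_{1}(L)$ from \ref{Tian} to get bigness via \cite{Demailly1}, verify that the Lelong numbers vanish (your bounded local potential $\sum_{i\le k}(1-\alpha_i)^{-2}|z_i|^{2(1-\alpha_i)}$ for the edge model is the right computation), and then upgrade to ampleness via Demailly regularization. The paper does not actually reprove this proposition in the edge case; it simply refers to Propositions 2.1 and 2.2 of \cite{LucaE}, where the same regularization strategy is carried out. So your proposal is exactly in line with both the paper's methodology and the cited source, and your flagging of the regularization step as the one place needing the precise form of Demailly's theorem (smoothness \emph{and} strict positivity when all Lelong numbers vanish and the current dominates a K\"ahler form) is appropriate.
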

\begin{proof}
See Propositions 2.1 and 2.2 in \cite{LucaE}.
\end{proof}

To sum up, the entire set of questions concerning the existence and
limiting behavior of families of K\"ahler-Einstein metrics on
quasi-projective varieties requires a precise understanding of the
positivity properties of certain log-canonical bundles. This is the
object of study in Sections \ref{General}-\ref{Fano}. Since we are
primarily interested in taking limits as the cone angles go to $0$
or $2\pi$ we limit ourselves to the case where all the
$\alpha_{i}$'s are identical.

\section{Some theorems from algebraic geometry}\label{Geometry}

In this section we recall some fundamental results in algebraic
geometry. We follow the notation and terminology of \cite{KM}.
However we state here some definitions we will need later.
\begin{definition}
A 1-cycle is a formal linear combination of irreducible, reduced and
proper curves $C=\sum a_{i}C_{i}$. Two 1-cycles $C,C'$ are called
numerically equivalent if $(C\cdot D)=(C'\cdot D)$ for any Cartier
divisor $D$. 1-cycles with real coefficients modulo numerical
equivalence form a $\rr$-vector space and we denote it by
$N_{1}(X)$.
\end{definition}
We are interested in particular subspaces of $N_{1}(X)$.
\begin{definition}
Let $X$ be a proper variety. \bdism NE(X):=\left\{ \sum a_{i}[C_{i}]
\:|\: C_{i}\subset X, \: 0\leq a_{i}\in\rr\right\}\subset N_{1}(X).
\edism
Let $\overline{NE}(X)$ be the closure of $NE(X)$ in $N_{1}(X)$. \\
For any divisor $D$, set $D_{\geq0}:=\left\{x\in N_{1}(X)\:|\:
D\cdot x\geq0\right\}$ and similarly for $>0$, $\leq0$ and $<0$.
Finally \bdism \overline{NE}(X)_{D\geq0}:=\overline{NE}(X)\cap
D_{\geq0}. \edism
\end{definition}

In birational geometry and especially in the minimal model program,
we need to deal with some classes of singularities of pairs $(X,D)$.
The definitions are a little bit technical but the idea is simple.
For a singular variety $X$ we want to measure its singularities
comparing $K_{X}$ and $f^{*}K_{X}$, where $f$ is a resolution of the
singularities of $X$. In case we have a pair $(X,D)$ we will measure
the singularities of $X$ and $D$ together.

\begin{definition}
A pair $(X,D)$ consists of a normal variety $X$ and a $\rr$-Weil
divisor $D$ such that $K_{X}+D$ is $\rr$-Cartier.
\end{definition}

Let $(X,D)$ be a pair. A log resolution of the pair is a birational
map $f:Y\rightarrow X$ such that $Y$ is smooth, the exceptional set
$Ex(f)$ of $f$ is a divisor and $Ex(f)\cup f^{-1}(\supp D)$ is a
simple normal crossing divisor. For any log resolution of $(X,D)$ we
can write $K_{Y}+\Gamma\sim_{\qq}f^{*}(K_{X}+D)$, where $\Gamma=\sum
a_{i}\Gamma_{i}$ and $\Gamma_{i}$ are distinct reduced irreducible
divisors. $1-a_{i}$ is called the log discrepancy of $\Gamma_{i}$
with respect to $(X,D)$.

\begin{definition}
Let $(X,D)$ be a pair. We say that $(X,D)$ is a klt (resp. lc) pair
if there exists a log resolution $f:Y\rightarrow X$ as above such
that $a_{i}<1$ (resp. $a_{i}\leq1$).
\end{definition}

The abbreviation klt stands for Kawamata log terminal and lc for log
canonical. In this paper we will mainly deal with pairs $(X,D)$
where $X$ is a smooth variety. In this case the discrepancies
measure how singular is the effective divisor $D$. In case $D$ has
simple normal crossing support then we get only a restriction on the
coefficients of $D$. More precisely we have the following result,
see Corollary 3.12 in \cite{Kol} for a proof.

\begin{proposition}\label{klt}
Let $X$ be a smooth variety and $D=\sum d_{i}D_{i}$ an effective
divisor with simple normal crossing support. Then $(X,D)$ is klt
(resp. lc) if and only if $d_{i}<1$ (resp. $d_{i}\leq1$) for all i.
\end{proposition}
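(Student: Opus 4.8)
The plan is to reduce to a purely local, coordinate computation, since both the klt/lc property and the simple normal crossing hypothesis are local on $X$. First I would observe that because $X$ is smooth, $X$ itself is already a log resolution of the pair $(X, D)$ \emph{if} $\supp D$ has simple normal crossings; so one candidate resolution is simply $f = \mathrm{id}$, giving $\Gamma = D$ and discrepancies $a_i = d_i$. This immediately yields the ``only if $d_i < 1$ (resp. $\leq 1$) is necessary'' direction once we know that the discrepancies computed from \emph{any} log resolution obey the same constraint, which is the content of the comparison between log resolutions: different log resolutions are dominated by a common one, and the discrepancy along a fixed divisor is independent of the chosen resolution (or only decreases under blow-up in a controlled way). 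I would cite this standard fact from \cite{KM} rather than reprove it.

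The substantive direction is: if $d_i < 1$ for all $i$ (resp. $\leq 1$), then on an \emph{arbitrary} log resolution $f : Y \to X$ all the log discrepancies are $< 1$ (resp. $\leq 1$). Here I would work in local analytic (or \'etale) coordinates $z_1, \dots, z_n$ on $X$ in which $D = \sum_{i=1}^k d_i \{z_i = 0\}$, and recall that any log resolution of such a pair is obtained, up to dominating, by a sequence of blow-ups along smooth centers that have simple normal crossings with the (total transform of the) boundary. The key step is then to track, under a single such blow-up $\pi : Y' \to Y$ with center $Z$ of codimension $c$, how the coefficient of the new exceptional divisor $E$ enters: one has $K_{Y'} = \pi^* K_Y + (c-1) E$ and $\pi^*(\sum a_j \Gamma_j) = \sum a_j \Gamma_j' + (\sum_{j : Z \subset \Gamma_j} a_j) E$, so the coefficient of $E$ in $\Gamma'$ is $(\sum_{Z \subset \Gamma_j} a_j) - (c-1)$. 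Since the $\Gamma_j$ through $Z$ form a simple normal crossing divisor, at most $c$ of them pass through $Z$, so this coefficient is at most $c \cdot (\max_j a_j) - (c-1) = 1 + c(\max_j a_j - 1)$; if all the $a_j < 1$ (resp. $\leq 1$) this is $< 1$ (resp. $\leq 1$). An induction on the number of blow-ups in the resolution then gives the claim.

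The main obstacle I anticipate is bookkeeping rather than conceptual: one must check that the simple-normal-crossing hypothesis is preserved at each stage (so that ``at most $c$ components through a codimension-$c$ center'' remains valid), and that every log resolution really is dominated by such a tower of admissible blow-ups — this is where I would lean on the structure theory of resolutions in \cite{KM} and \cite{Kol}. Given that \cite{Kol}, Corollary 3.12, is precisely this statement, the honest move is to state the reduction to the local model, sketch the one-blow-up estimate above as the heart of the matter, and refer to \cite{Kol} for the full inductive argument; that keeps the proof self-contained in spirit without reproving standard resolution combinatorics.
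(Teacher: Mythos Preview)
The paper does not prove this proposition; it simply cites Corollary 3.12 in \cite{Kol}. Your argument is correct, and the blow-up computation is the right core of the standard proof. But note that with the paper's definition of klt/lc (requiring only that \emph{some} log resolution has all $a_i<1$, resp.\ $\leq 1$), both directions are almost immediate and no induction on blow-ups is needed. For the ``if'' direction, the identity $f=\mathrm{id}_X$ is already a log resolution (since $X$ is smooth and $\supp D$ is SNC), with $\Gamma=D$ and $a_i=d_i$. For ``only if'', on \emph{any} log resolution $f:Y\to X$ the non-exceptional components of $\Gamma$ are the strict transforms $\tilde D_i$ with coefficient exactly $d_i$ (read $K_Y+\Gamma=f^*(K_X+D)$ over the generic point of $D_i$), so the single witnessing resolution already forces $d_i<1$. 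What your longer argument actually establishes is the stronger fact that the discrepancy condition is independent of the chosen log resolution in the SNC case---which is the real content of \cite{Kol}, Corollary 3.12, and worth knowing, but not strictly required by the definition as stated here.
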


Roughly speaking the goal of the minimal model program is to find
the simplest variety birationally equivalent to a given variety. The
natural way to measure how simple a variety can be, is to look at
its canonical divisor and its intersection with curves on the
variety. So it is fundamental to understand $(K_{X})_{\leq 0}$
inside $\overline{NE}(X)$. The celebrated Cone Theorem describe how
the negative part looks like: it is generated by countably many
rational curves with bounded intersection with $K_{X}$. For details
see \cite{KM}.

\begin{theorem}[Cone theorem]\label{cone}
Let $(X,D)$ be a lc pair. Then there are countably many rational
curves $C_{j}\subset X$ such that $0<-(K_{X}+D)\cdot C_{j}\leq 2n$
and \bdism \overline{NE}(X)=\overline{NE}(X)_{(K_{X}+D)\geq
0}+\sum\rr_{\geq0}[C_{j}]. \edism If $X$ is smooth and $D=\emptyset$
then we can choose $C_{j}$ such that $0<-K_{X}\cdot C_{j}\leq n+1$.
\end{theorem}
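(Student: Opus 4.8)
The plan is to treat the two assertions separately: the \emph{structure} of the cone, namely that $\overline{NE}(X)$ equals its $(K_X+D)$-nonnegative part plus countably many rays, each spanned by a \emph{rational} curve; and the \emph{length bound}, namely $0<-(K_X+D)\cdot C_j\le 2n$ (and $\le n+1$ in the smooth case $D=\emptyset$). I would derive the structure statement from the Rationality Theorem together with the contraction (base-point-free) theorem, and obtain the length bound from Mori's bend-and-break technique.

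For the structure, first fix an ample divisor $H$. The core input is the Rationality Theorem: the nef threshold $r(H)=\sup\{t\in\rr: H+t(K_X+D)\text{ is nef}\}$ is a rational number whose denominator is bounded in terms of $n$ only. I would prove this along the classical lines — estimate the relevant section module via asymptotic Riemann--Roch, so that the dimension of a suitable cohomology group is eventually a polynomial in the coefficients, and use Kawamata--Viehweg vanishing to force that group to be nonzero just past the threshold, contradicting nefness if the threshold were irrational or had too large a denominator. Then, letting $H$ range over ample classes, every supporting hyperplane of $\overline{NE}(X)$ meeting the region where $K_X+D$ is negative is rational; a separation-of-convex-sets argument together with the local finiteness coming from the denominator bound shows that, away from the wall $(K_X+D)=0$, the negative part of the cone is a locally finite union of extremal rational polyhedral subcones. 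Summing these and invoking the contraction theorem to exhibit, for each such extremal face, a morphism of relative Picard number one contracting exactly its curves, yields the decomposition with at most countably many extremal rays.

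For the extremal rays being generated by rational curves with controlled degree, I would run bend-and-break: given an extremal ray $R$, take a curve $C$ with class in $R$ through a chosen point $p$; after replacing $C$ by a large multiple, the space of deformations of $C$ through $p$ has dimension exceeding $-(K_X+D)\cdot C$, which (when $X$ is smooth, via reduction to positive characteristic and composition with Frobenius; in general after passing to a log resolution and arguing with adjunction) forces $C$ to degenerate to a connected curve with a rational component whose class still lies in $R$. Iterating and tracking the numerical estimate gives a rational curve $C_j$ spanning $R$ with $0<-K_X\cdot C_j\le n+1$ in the smooth case and $0<-(K_X+D)\cdot C_j\le 2n$ in general, using the bound on the length of extremal rays for lc pairs for the latter.

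The main obstacle is precisely the effective control in these two inputs: extracting the denominator bound in the Rationality Theorem, and extracting the sharp constants $n+1$ and $2n$ from bend-and-break (the positive-characteristic deformation argument, or its characteristic-zero substitute, is where all the real difficulty sits). The convex-geometry bookkeeping — separating hyperplanes, local finiteness of the rays off the wall $(K_X+D)=0$, and the extraction of a countable family — is essentially formal once those two lemmas are in hand. I would therefore structure the argument so that the Rationality Theorem and the bend-and-break/length lemma are proved first, in isolation, and the cone decomposition is assembled from them at the end.
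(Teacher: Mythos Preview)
The paper does not prove this theorem: it is quoted as background and the proof is delegated to the literature, with \cite{KM} for the klt case and \cite{Ambro}, \cite{Fujino} for the lc case. There is therefore no ``paper's own proof'' to compare against; your outline is essentially a sketch of the standard argument in \cite{KM} (rationality theorem plus base-point-free/contraction theorem for the polyhedral structure, bend-and-break in characteristic $p$ for the rational curves and the bound $n+1$ in the smooth case), and is broadly correct at that level of detail.

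One point worth flagging: your treatment of the lc case is where the sketch is thinnest. Kawamata--Viehweg vanishing and the base-point-free theorem in the form of Theorem~\ref{bpf} are stated for klt pairs, and the passage to lc pairs is not just ``pass to a log resolution and argue with adjunction''; the references \cite{Ambro} and \cite{Fujino} develop the theory of quasi-log varieties precisely to make the rationality and contraction theorems go through in the lc setting, and the length bound $2n$ (as opposed to $2\dim X$ for klt via Kawamata's argument) also requires care. So while the architecture you describe is right, the lc refinements are substantial and not covered by the klt machinery you invoke.
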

In \cite{KM} the cone theorem is proved for klt pairs. The case of
lc pairs is treated in \cite{Ambro} and \cite{Fujino}.

It will be important for us to understand if certain adjoint linear
systems are semi-ample. Our main tool to study semi-ampleness is the
base point free theorem. It is one of the main step in the proof of
the cone theorem and it is of great interest on its own. Recall that
a divisor $L$ is big if it has maximal Kodaira dimension and it is
nef if $L\cdot C\geq 0$ for any irreducible curve $C$.

\begin{theorem}[Base point free theorem]\label{bpf}
Let $(X,D)$ a klt pair and let $L$ be a nef Cartier divisor such
that $aL-(K_{X}+D)$ is big and nef for some $a>0$. Then the linear
system $|mL|$ is base point free for any $m\gg 0$.
\end{theorem}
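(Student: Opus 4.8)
The plan is to deduce Theorem~\ref{bpf} from Shokurov's Non-vanishing theorem (see \cite{KM}) together with the Kawamata--Viehweg vanishing theorem, by way of a tie-breaking construction of auxiliary boundaries. A few harmless reductions come first. Since nefness, bigness and the klt property are all stable under small perturbations of coefficients, we may assume $D$ is a $\qq$-divisor. By a standard semigroup argument it then suffices to exhibit one $m_{0}$ for which both $|m_{0}L|$ and $|(m_{0}+1)L|$ are base point free, because every sufficiently large multiple of $L$ is a non-negative integral combination of $m_{0}L$ and $(m_{0}+1)L$ and hence inherits base-point-freeness. So the real content is: for suitable large $m$, the base scheme $\bs|mL|$ is empty.

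First, non-emptiness. Fix a log resolution $f\colon Y\to X$ of $(X,D)$ and write $K_{Y}+\Gamma\sim_{\qq}f^{*}(K_{X}+D)$. Applying the Non-vanishing theorem on $Y$ with nef divisor $f^{*}L$ and auxiliary $\qq$-divisor $-\Gamma$ --- note $a\,f^{*}L-\Gamma-K_{Y}\sim_{\qq}f^{*}\bigl(aL-(K_{X}+D)\bigr)$ is nef and big --- gives $H^{0}\bigl(X,\oo_{X}(mL)\bigr)\neq 0$ for all $m\gg 0$, and in fact the base loci stabilize to a fixed closed subset $B=\bs|mL|$ for $m$ in a fixed arithmetic progression. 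Suppose, for contradiction, that $B\neq\emptyset$, and choose a general point $x$ in an irreducible component $Z\subseteq B$ of maximal dimension.

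The heart of the matter is to manufacture, for a suitable large $m$, an effective $\qq$-divisor $N$ on $X$ such that (i) $mL-(K_{X}+D+N)$ is nef and big, and (ii) the multiplier ideal $\jj(X,D+N)$ is cosupported exactly at the point $x$. Property (ii) is arranged by the usual tie-breaking: enlarge $f$ so that it also resolves $|m'L|$ for the relevant large $m'$, invoke Kodaira's lemma to write $f^{*}\bigl(aL-(K_{X}+D)\bigr)\sim_{\qq}A+E$ with $A$ ample and $E\geq 0$, and then take a $\qq$-linear combination of the fixed part of $f^{*}|m'L|$, of $E$, and of the $f$-exceptional divisors whose coefficients are tuned so that exactly one divisor lying over $x$ acquires log discrepancy zero; pushing forward yields $N$, and property (i) follows from the $\qq$-linear equivalence together with the bigness of $aL-(K_{X}+D)$. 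It is precisely this tuning of rational coefficients that forces the conclusion to hold only for $m\gg 0$. Given such an $N$, the endgame is immediate: by (i) and Nadel vanishing (equivalently Kawamata--Viehweg vanishing applied upstairs on $Y$) we get $H^{1}\bigl(X,\jj(X,D+N)\otimes\oo_{X}(mL)\bigr)=0$, so in
\[
0\longrightarrow \jj(X,D+N)\otimes\oo_{X}(mL)\longrightarrow \oo_{X}(mL)\longrightarrow \oo_{\{x\}}\longrightarrow 0,
\]
whose cokernel is a skyscraper at $x$ by (ii), the map $H^{0}\bigl(X,\oo_{X}(mL)\bigr)\to\cc$ is onto; hence some section of $|mL|$ does not vanish at $x$, contradicting $x\in B=\bs|mL|$. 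Therefore $B=\emptyset$, and the reductions above finish the proof.

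I expect the main obstacle to be the construction of $N$: simultaneously controlling the coefficients of the fixed part of $f^{*}|m'L|$, of $E$, and of the exceptional divisors so that the non-klt locus of $(X,D+N)$ is a single point, while preserving the nefness and bigness of $mL-(K_{X}+D+N)$ needed for the vanishing step. When $\dim Z>0$ one cannot land on a point directly and must run a secondary induction --- restricting to $Z$, or to a general hyperplane section through $x$, and invoking the theorem in lower dimension after checking that the klt hypothesis and the hypothesis that $aL-(K_{X}+D)$ is nef and big descend to the restriction. This inductive descent is the genuinely technical part of the argument.
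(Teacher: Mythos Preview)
The paper does not give its own proof of Theorem~\ref{bpf}; it is stated as a known result and the reader is referred to \cite{KM} for the argument. Your outline is precisely the Kawamata--Shokurov proof as presented there, so in that sense your approach agrees with the paper's (implicit) one.

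One small inconsistency in your write-up: in step (ii) you ask that $\jj(X,D+N)$ be cosupported \emph{exactly at the point $x$}, but as you yourself note in the final paragraph, when $\dim Z>0$ this is not what one achieves directly. The standard argument instead arranges that the non-klt locus of $(X,D+N)$ has a unique minimal center $W$ through $x$ with $\dim W<\dim Z$, and then either restricts to $W$ and applies induction, or iterates the construction until a zero-dimensional center is reached. Your sketch conflates the endgame (the skyscraper case) with the general step; the actual inductive structure is on the dimension of the minimal lc center, not on $\dim X$ via hyperplane sections. This is a presentational issue rather than a mathematical gap, since you flag the difficulty, but if you were to write this out in full the short exact sequence you display would have $\oo_{W}$ (or an extension-by-sections argument on $W$) in place of $\oo_{\{x\}}$ at the intermediate stages.
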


See \cite{KM} for the proof of the base point free theorem and more
details.

A divisor is said to be strictly nef if $L\cdot C > 0$ for any
irreducible curve $C$. Note that a strictly nef divisor is not
necessary ample even if it is big, see Section 5 and \cite{Hart2}
for examples. On the other hand the base point free theorem implies
the following.

\begin{corollary}\label{strictly}
Let $(X,D)$ be a klt pair such that $K_{X}+D$ is big and strictly
nef. Then $K_{X}+D$ is ample.
\end{corollary}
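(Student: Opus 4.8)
The plan is to deduce Corollary~\ref{strictly} from the base point free theorem (Theorem~\ref{bpf}) applied to $L = K_X + D$. First I would observe that since $K_X+D$ is big and nef, it is in particular nef, and the divisor $2(K_X+D) - (K_X+D) = K_X+D$ is big and nef as well; hence the hypotheses of Theorem~\ref{bpf} are met (with $a=2$, say), so $|m(K_X+D)|$ is base point free for all $m \gg 0$. Fixing such an $m$, we get a morphism $\phi = \phi_{|m(K_X+D)|} : X \to \pp^N$ with $\phi^*\oo_{\pp^N}(1) = \oo_X(m(K_X+D))$.

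The key step is then to show that $\phi$ is finite, equivalently that it does not contract any curve. Suppose $C \subset X$ is an irreducible curve contracted by $\phi$. Then $m(K_X+D)\cdot C = \deg\bigl(\phi^*\oo_{\pp^N}(1)|_C\bigr) = 0$, so $(K_X+D)\cdot C = 0$, contradicting the hypothesis that $K_X+D$ is strictly nef. Therefore $\phi$ contracts no curve; since a proper morphism with finite fibres is finite, $\phi$ is finite onto its image. Consequently $\oo_X(m(K_X+D)) = \phi^*\oo_{\pp^N}(1)$ is the pullback of an ample divisor under a finite morphism, hence ample; thus $K_X+D$ is ample.

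I expect the main (minor) obstacle to be the precise justification that a proper morphism to projective space which contracts no curve is finite — this is a standard fact (properness plus quasi-finiteness on fibres, using that the fibres are proper and positive-dimensional fibres would contain curves), but it deserves a clean one-line citation or argument. One should also note at the start that if $X$ is a point or a curve the statement is trivial, so we may assume $\dim X \geq 1$ and that there exist curves to test against; and that big-and-strictly-nef already forces $(K_X+D)^n > 0$, which is consistent with applying Theorem~\ref{bpf}. Everything else is formal, so the argument is short.
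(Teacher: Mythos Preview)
Your proposal is correct and follows essentially the same argument as the paper: apply Theorem~\ref{bpf} with $L=K_X+D$ to get semi-ampleness, observe that the associated morphism $\phi_{|m(K_X+D)|}$ contracts no curve by strict nefness, hence is finite, and conclude ampleness from the fact that the pullback of an ample line bundle under a finite morphism is ample. The paper handles your ``minor obstacle'' by simply citing Corollary~1.2.15 in \cite{Laz1} for this last step.
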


\begin{proof}

Since $K_{X}+D$ is big and nef by Theorem \ref{bpf} it is
semi-ample. Then for $m$ big enough $m(K_{X}+D)$ is base point free
and it defines a morphism $\phi_{|m(K_{X}+D)|}:X\rightarrow
Y\subseteq \pp^{k}$, where $k=\dim H^{0}(X,\oo_{X}(m(K_{X}+D)))-1$.
Suppose $C$ is a curve in some fiber of $\phi$ then $(K_{X}+D)\cdot
C=0$, but this is impossible since $K_{X}+D$ is strictly nef. Then
$\phi$ is a finite morphism and Corollary 1.2.15 in \cite{Laz1}
implies that $K_{X}+D$ is ample.
\end{proof}
Note that the same proof tells us that strictly nef and semi-ample
divisors are ample.

The base point free theorem is one of the first step toward a
solution of the following conjecture.
\begin{conjecture}[Abundance] \label{abundance}
Let $(X,D)$ a lc pair such that $K_{X}+D$ is nef. Then $K_{X}+D$ is
semi-ample.
\end{conjecture}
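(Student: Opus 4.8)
The plan has to begin with a caveat: Conjecture \ref{abundance} is, in full generality, one of the central open problems of the minimal model program, so what follows is a strategy together with the known cases rather than a complete proof. Unconditionally the conjecture is a theorem in dimension $\leq 3$ (Miyaoka and Kawamata), and there are several higher-dimensional special cases; the one directly relevant to the present paper is that \emph{if $K_X+D$ is big and nef then it is semi-ample}. So the first thing I would do is reduce to a $\qq$-factorial klt pair --- replace $(X,D)$ by a dlt model and, away from the reduced part $\lfloor D\rfloor$, perturb the coefficients down using Proposition \ref{klt}, noting that nefness of the log canonical class is preserved along the steps of the relevant MMP by the negativity lemma. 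In the big-and-nef case one is then done immediately by Theorem \ref{bpf}: for $a\gg 0$ the divisor $a(K_X+D)-(K_X+D)$ is big and nef, so $|m(K_X+D)|$ is base point free for $m\gg 0$. This is exactly the mechanism behind Corollary \ref{strictly}, with the strict-nef hypothesis dropped.

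For the general case the content of the conjecture, granting nefness, is the numerical identity $\kappa(K_X+D)=\nu(K_X+D)$, and the natural approach is an induction on $\dim X$ organised by the value of $\nu := \nu(K_X+D)$. The case $\nu=\dim X$ is the big-and-nef case just handled. The case $\nu=0$ amounts to $K_X+D\sim_{\qq}0$; this is a theorem of Nakayama (and Kawamata), proved via the non-vanishing theorem and a finiteness argument for the section ring. The hard range $0<\nu<\dim X$ I would attack by first establishing $\kappa(K_X+D)\geq 0$ (the \emph{non-vanishing conjecture}), then forming the Iitaka fibration $f\colon X\dashrightarrow Z$ of $K_X+D$, running relative abundance over $Z$, and applying the canonical bundle formula $K_X+D\sim_{\qq} f^{*}(K_Z+B_Z+M_Z)$ to reduce to abundance for a klt pair $(Z,B_Z+M_Z)$ of smaller dimension --- or, in the situation where abundance is already known on the fibres, by comparing $\kappa$ and $\nu$ through the Albanese map.

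The hard part --- and the reason one should not expect a proof here --- is twofold. First, the non-vanishing conjecture itself is open beyond dimension $3$, and without $\kappa(K_X+D)\geq 0$ there is no Iitaka fibration on which to run the induction. Second, even granting non-vanishing, the canonical bundle formula step requires that the moduli part $M_Z$ be not merely nef but semi-ample, and that it descend to an honest $\qq$-Cartier divisor on a log-canonical model of the base --- essentially the effective $b$-semi-ampleness conjecture --- which is also unknown in general. A realistic write-up would therefore either restrict to $\dim X\leq 3$ and cite Miyaoka--Kawamata, restrict to the big case and invoke Theorem \ref{bpf} as above, or present the reductions of the second paragraph conditionally on non-vanishing and $b$-semi-ampleness.
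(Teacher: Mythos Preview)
Your proposal is appropriate, and in fact the paper itself offers no proof at all: Conjecture~\ref{abundance} is stated precisely as an open conjecture, with the remark that it is known only for $\dim X\leq 3$ (citing \cite{Kol2}, \cite{Kee}, \cite{Kee2}) and that the big-and-nef klt case follows from Theorem~\ref{bpf}. You correctly identify the statement as open and supply exactly these two known inputs, so on the level of what can actually be proved you and the paper agree.

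Where you differ is in scope rather than content: the paper simply records the conjecture and moves on, whereas you sketch the standard inductive strategy (reduction to klt via dlt modification, case split on the numerical dimension $\nu$, non-vanishing plus Iitaka fibration and canonical bundle formula) and flag the two genuine obstructions (non-vanishing and $b$-semi-ampleness of the moduli part). This is a reasonable and accurate survey of the state of the art, and nothing in it is wrong, but it is not a proof and you are right to say so. One small caution: your first paragraph speaks of ``perturbing the coefficients down'' to pass from lc to klt while preserving nefness; this is not always possible without losing nefness (the lc versus klt distinction is exactly where abundance becomes subtle, as the paper notes just after the conjecture), so that reduction step should be phrased as passing to a dlt model and then arguing on the non-klt locus separately, rather than as a global perturbation.
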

The abundance conjecture in such generality it is known only if
$\dim(X)\leq 3$, see \cite{Kol2}, \cite{Kee} and \cite{Kee2}.
Furthermore the base point free theorem implies that if $(X,D)$ is a
klt pair with $K_{X}+D$ big and nef then $K_{X}+D$ is semi-ample.

Note that Conjecture \ref{abundance} is not a trivial statement even
for surfaces. If the abundance conjecture holds then it implies that
if $K_{X}+D$ is strictly nef then it is ample.

The difference between abundance for klt pairs and abundance for lc
pairs is very subtle. The base point free theorem, as stated in
Theorem \ref{bpf}, it is not know for lc pairs. Actually Theorem
\ref{bpf} for lc pairs in dimension $n+1$ implies the abundance
conjecture for klt pairs of any Kodaira dimension in dimension $n$.
In particular it implies abundance with $X$ smooth and
$D=\emptyset$. See Theorem A.6 in \cite{Lazic} for details.

In the study of K\"ahler-Einstein metrics is fundamental to
understand where the map associated to some power of a big and nef
line bundle defines and embedding into some projective space. This
locus can be understood using a refinement of the stable base locus.
We recall here the key definition and we refer to Definition 10.3.2
in \cite{Laz2} and the discussion there for more details.
\begin{definition}
The stable base locus of a divisor $L$ is \bdism
\B(L):=\bigcap_{m\geq 1}\bs(|mL|), \edism
where $\bs(|L|)$ is the base locus of $L$. \\
The augmented base locus of a divisor $L$ is the Zariski-closed set
\bdism \B_{+}(L):=\B(L-\epsilon A), \edism for any ample $A$ and
sufficiently small $\epsilon>0$.
\end{definition}

Note that $\B_{+}(L)$ is a closed subset of $X$ if and only if $L$
is big and $\B_{+}(L)=\emptyset$ if and only if $L$ is ample. In
Section \ref{Modulo} we will see how $\B_{+}(L)$ is related to the
locus where the map $\phi_{|mL|}$ is not an embedding, for $m$
sufficiently large.

Quite remarkably the augmented base locus of a big and nef divisor
can be described numerically thanks to a theorem of Nakamaye. See
Theorem 10.3.5 in \cite{Laz2}.

\begin{theorem}\label{Nakamaye}
Let $X$ be a smooth projective variety and let $L$ be a big and nef
divisor on $X$. Then $\B_{+}(L)$ is the union of all positive
dimensional subvarieties $V\subseteq X$ such that $L^{\dim(V)}\cdot
V=0$.
\end{theorem}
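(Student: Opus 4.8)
This is Nakamaye's theorem, and the two inclusions have very different characters, so I would treat them separately. Write $\mathrm{Null}(L)$ for the union of the positive-dimensional subvarieties $V$ with $L^{\dim V}\cdot V=0$. The inclusion $\mathrm{Null}(L)\subseteq\B_{+}(L)$ is elementary: suppose $V$ is irreducible of dimension $d\geq 1$ with $V\not\subseteq\B_{+}(L)$. Using the description of the augmented base locus as $\B(L-A)$ for a small ample $\qq$-divisor $A$, there is a $\qq$-linear equivalence $L\sim_{\qq}A+E$ with $A$ ample, $E\geq 0$ effective, and $V\not\subseteq\supp E$. Restricting to $V$ and using that $L|_{V}$ is nef, one peels off copies of $A$ one at a time: $(L|_{V})^{d}=(L|_{V})^{d-1}\cdot A|_{V}+(L|_{V})^{d-1}\cdot E|_{V}$, where the last term is $\geq 0$ because $E|_{V}$ is an effective divisor on $V$ and all the factors are nef; iterating gives $L^{d}\cdot V=(L|_{V})^{d}\geq (A|_{V})^{d}>0$ since $A|_{V}$ is ample. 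Hence $V\not\subseteq\mathrm{Null}(L)$, which is the contrapositive of what we want.

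The reverse inclusion $\B_{+}(L)\subseteq\mathrm{Null}(L)$ is the substance of the theorem. By Noetherian induction it is enough to show that every irreducible component $Z$ of $\B_{+}(L)$ satisfies $L^{\dim Z}\cdot Z=0$; note this also rules out isolated points, since the case $\dim Z=0$ would demand $L^{0}\cdot\{x\}=1=0$ and is disposed of by the same argument. Assume for contradiction that $d:=\dim Z\geq 1$ and $L^{d}\cdot Z>0$. Then $L|_{Z}$ is big and nef on $Z$, so on a resolution $Z'\to Z$ the pullback of $L$ is big and nef with positive top self-intersection; applying the theorem inductively in dimension $<n$ gives $\B_{+}(L|_{Z'})\neq Z'$, so the asymptotic linear systems $|mL|_{Z'}|$ separate a general point of $Z'$. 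The plan is to extend such a section from $Z$ to all of $X$, thereby producing for a suitable multiple $k$ and sufficiently small ample $\epsilon A$ a section of $k(L-\epsilon A)$ not vanishing at a general point of $Z$ --- contradicting $Z\subseteq\B_{+}(L)=\B(L-\epsilon A)$. The extension is carried out on a log resolution $\mu\colon X'\to X$ on which the asymptotic base ideal of $L$ becomes a divisor, writing $\mu^{*}(mL)=P_{m}+N_{m}$ with $P_{m}$ semiample and $N_{m}$ the fixed part; one then invokes Nadel/Kawamata--Viehweg vanishing for a twisted sheaf of the shape $K_{X'}+(\text{big and nef})+(\text{klt boundary built from }N_{m}\text{ and the discrepancies})$ to kill the obstruction $H^{1}$ in the restriction sequence along the strict transform of $Z$, then pushes forward and subtracts $\epsilon A$.

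The heart of the difficulty is the construction of the auxiliary boundary divisor (equivalently, the asymptotic multiplier ideal) used in the vanishing step: it must simultaneously (i) be trivial at a general point of $Z$, so the lifted section is genuinely nonzero there; (ii) be concentrated heavily enough along the rest of $\B_{+}(L)$ and along $\mathrm{Ex}(\mu)$ to force the relevant higher cohomology to vanish; and (iii) leave enough positivity that subtracting the perturbation $\epsilon A$ destroys neither effectivity nor bigness. Balancing the discrepancy bookkeeping on $X'$ so that all three hold at once, and making the induction on $\dim X$ interact correctly with the singularities of $Z$, is the technical core of Nakamaye's argument; the base case $\dim X=1$ is immediate since a big and nef divisor on a curve has empty augmented base locus. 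For the complete argument we refer to Theorem~10.3.5 in \cite{Laz2} and the original paper of Nakamaye.
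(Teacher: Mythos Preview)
The paper does not prove this theorem at all: it is quoted as a known result of Nakamaye, with the reference ``See Theorem 10.3.5 in \cite{Laz2}'', and the only additional comment is the sentence immediately following the statement noting that the inclusion $\{V:L^{\dim V}\cdot V=0\}\subseteq\B_{+}(L)$ is the easy direction and that the content lies in the converse. So there is no ``paper's own proof'' to compare against; your proposal already goes well beyond what the paper offers.

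That said, your sketch is a faithful outline of the standard argument. The easy inclusion is exactly as you wrote. For the hard inclusion your description captures the shape of Nakamaye's proof --- pass to a log resolution, use the decomposition of $\mu^{*}(mL)$ into a moving and fixed part, and apply a vanishing theorem to lift sections off a component $Z$ of $\B_{+}(L)$ on which $L$ restricts to a big and nef class --- and you correctly identify the delicate point as constructing the auxiliary boundary/multiplier ideal with the three competing requirements. One small inaccuracy: the induction in Nakamaye's argument is not literally on $\dim X$ as you suggest, but rather a Noetherian induction on the components of $\B_{+}(L)$ combined with an asymptotic analysis of the growth of $h^{0}$; however this does not affect the validity of your outline, and since you (like the paper) ultimately defer to \cite{Laz2} for the full details, your proposal is acceptable.
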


Note that if a subvariety $V\subseteq X$ is such that
$L^{\dim(V)}\cdot V=0$ then $V\subseteq \B_{+}(L)$, so the content
of the theorem is to prove the converse.

\section{Positivity for varieties of general and log-general type}\label{General}

As recalled in Section \ref{Kahler-Einstein}, given a pair $(X, D)$
for which $K_{X}+D$ is ample then there exists a K\"ahler-Einstein
metric on $X\backslash D$ with Poincar\'e singularities along $D$.
Following intuitions of G. Tian \cite{Tian}, this K\"ahler-Einstein
metric should arise as the limit of a $1$-parameter family of
K\"ahler-Einstein metrics with cone-edge singularities along $D$. It
would then be desirable to have a uniform interval of angles,
depending on the dimension only, for which the existence of such
metrics is unobstructed, see Proposition \ref{luca}. Thus, to warm
up, we study the following.

\begin{question}\label{q1}
Let $X^{n}$ be a smooth variety and let $D$ be a reduced effective
divisor such that $K_{X}+D$ is ample. Is there a fixed
$\alpha_{n}<1$, which depends only $n$, such that $K_{X}+\alpha D$
is ample for any $\alpha\in\left(\alpha_{n},1\right]$?
\end{question}

In order to deal with the previous problem we need to study the nef
threshold.

\begin{definition}\label{threshold}
Let $X$ be a smooth projective variety and let $D$ be an integral
divisor such that $K_{X}+tD$ is nef for some $t\in\rr_{\geq 0}$. The
nef threshold is \bdism r(X,D):=\inf\left\{ t \in \rr_{\geq 0} \:|\:
K_{X}+t D \:\text{is nef}\right\}. \edism
\end{definition}

Since ampleness is an open condition, see Proposition 1.3.7 in
\cite{Laz1}, we know that $r(X,D)<1$ in Question \ref{q1}. The
content of the question is then if it is possible to bound the nef
thresholds away from one uniformly for all varieties of fixed
dimension. In order to obtain a uniform bound we use the following
well known corollary of the cone theorem.

\begin{lemma}\label{lem1}
Let $X$ be a smooth projective variety with $\dim(X)=n$ and let $H$
be an ample divisor. Then $K_{X}+(n+1)H$ is nef.
\end{lemma}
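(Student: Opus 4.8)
The plan is to invoke the Cone Theorem (Theorem \ref{cone}) in the smooth, boundary-free case, where it guarantees that $\overline{NE}(X)$ is generated by $\overline{NE}(X)_{K_X \geq 0}$ together with rays $\rr_{\geq 0}[C_j]$ along rational curves $C_j$ satisfying $0 < -K_X \cdot C_j \leq n+1$. To show $K_X + (n+1)H$ is nef, it suffices to check that it has nonnegative intersection with every element of $\overline{NE}(X)$, and by the cone decomposition it is enough to check this on the two types of generators. First I would handle a class $x \in \overline{NE}(X)_{K_X \geq 0}$: here $K_X \cdot x \geq 0$ and $H \cdot x \geq 0$ since $H$ is ample and $x$ is a limit of effective classes, so $(K_X + (n+1)H)\cdot x \geq 0$ immediately.

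The remaining case is an extremal rational curve $C_j$ with $-K_X \cdot C_j \leq n+1$. Since $H$ is ample, $H \cdot C_j \geq 1$ (the intersection is a positive integer for an ample Cartier divisor against an irreducible curve), hence $(n+1) H \cdot C_j \geq n+1 \geq -K_X \cdot C_j$, which gives $(K_X + (n+1)H)\cdot C_j \geq 0$. Combining the two cases, $K_X + (n+1)H$ is nonnegative on a set of generators of $\overline{NE}(X)$, and since the pairing with a fixed divisor is linear and continuous, it is nonnegative on all of $\overline{NE}(X)$; by Kleiman's criterion this is exactly the statement that $K_X + (n+1)H$ is nef.

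I do not anticipate a genuine obstacle here — the lemma is a standard and essentially immediate consequence of the Cone Theorem. The only point requiring a modicum of care is the closed-cone generator of the form $x \in \overline{NE}(X)_{K_X \geq 0}$: one must observe that ampleness of $H$ forces $H \cdot x \geq 0$ for every $x \in \overline{NE}(X)$, not merely for effective curve classes, which follows because $H_{\geq 0}$ is a closed half-space containing $NE(X)$ and hence its closure. With that remark in place the argument is complete.
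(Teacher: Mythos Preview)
Your proof is correct and follows essentially the same route as the paper: both invoke the Cone Theorem in the smooth case to decompose any curve class into a $K_X$-nonnegative part plus a combination of extremal rational curves with $-K_X\cdot C_j\leq n+1$, then use $H\cdot C_j\geq 1$ for the ample $H$ to conclude. Your version is slightly more explicit about handling classes in the closure and invoking Kleiman's criterion, but the argument is the same.
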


\begin{proof}
By Theorem \ref{cone} we know that any curve $C$ in $X$ is
numerically equivalent to \bdism a_{1}C_{1}+\dots+a_{r}C_{r}+F,
\edism where $a_{i}$ are positive real numbers, $C_{i}$ are rational
curves such that $0>C_{i}\cdot K_{X}\geq -(n+1)$ and $K_{X}\cdot
F\geq 0$. Since $H$ is ample, we have $H\cdot C_{i}\geq 1$. Then
$(K_{X}+(n+1)H)\cdot C\geq 0$.
\end{proof}

The proposition below gives the desired uniform bound for the nef
threshold when $K_{X}+D$ is ample.

\begin{proposition}\label{nef1}
Let $X$ be a smooth projective variety of dimension $n$ and let $D$
be a divisor such that $K_{X}+D$ is ample. Then \bdism r(X,D)\leq
\frac{n+1}{n+2}. \edism
\end{proposition}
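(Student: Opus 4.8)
The plan is to show that $K_X + \frac{n+1}{n+2}D$ is nef, since then $r(X,D) \leq \frac{n+1}{n+2}$ by definition of the nef threshold. I will argue directly against curves, using the cone theorem to reduce to the extremal rational curves. Write $L := K_X + D$, which is ample by hypothesis. Then for any $t \in [0,1]$ we have the identity
\begin{align}\notag
K_X + tD = (1-t)K_X + tL,
\end{align}
so I want to understand $(1-t)K_X + tL$ on curves. The key observation is that $L$ ample forces a lower bound on $L \cdot C$ for any curve, while the cone theorem bounds $K_X \cdot C$ from below \emph{relative} to $L \cdot C$ on the extremal rays.

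First I would apply Theorem \ref{cone} to the smooth variety $X$ with $D = \emptyset$: there are rational curves $C_j$ with $0 < -K_X \cdot C_j \leq n+1$ generating $\overline{NE}(X)_{K_X \geq 0} + \sum \rr_{\geq 0}[C_j] = \overline{NE}(X)$. To check that $K_X + tD$ is nef it suffices to check nonnegativity on the generators of $\overline{NE}(X)$, i.e. on classes in $\overline{NE}(X)_{K_X \geq 0}$ and on each $C_j$. On $\overline{NE}(X)_{K_X \geq 0}$ we have $K_X \cdot C \geq 0$ and $D \cdot C = L \cdot C - K_X \cdot C$; here I should be a little careful since $D \cdot C$ need not be nonnegative, but $tD\cdot C = tL\cdot C - tK_X\cdot C$, and writing $K_X + tD = (1-t)K_X + tL$ makes it manifest that this is a nonnegative combination of $K_X$ (nonnegative on this part of the cone) and the ample class $L$, hence $\geq 0$ for all $t \in [0,1]$. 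So the only constraint comes from the extremal curves $C_j$.

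Now fix an extremal rational curve $C = C_j$. Since $L = K_X + D$ is ample and $C$ is an integral curve, $L \cdot C \geq 1$, i.e. $D \cdot C \geq 1 - K_X \cdot C \geq 1 + 1 = 2$ (using $-K_X\cdot C \geq 1$, as $-K_X\cdot C$ is a positive integer). Combining with $-K_X \cdot C \leq n+1$, we get
\begin{align}\notag
(K_X + tD)\cdot C = K_X\cdot C + t\,D\cdot C \geq -(n+1) + t\bigl(1 + (-K_X\cdot C)\bigr) \geq -(n+1) + t(n+2),
\end{align}
where in the last inequality I used $-K_X\cdot C \geq 1$ together with $D\cdot C \geq 1 + (-K_X\cdot C)$ only if that lower bound is increasing in $-K_X\cdot C$ — I need to double check the direction, but the cleanest route is: $D\cdot C = L\cdot C - K_X\cdot C \geq 1 - K_X\cdot C$, so $(K_X+tD)\cdot C \geq K_X\cdot C + t(1 - K_X\cdot C) = (1-t)K_X\cdot C + t \geq -(1-t)(n+1) + t$, which is $\geq 0$ precisely when $t \geq \frac{n+1}{n+2}$. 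Thus $K_X + \frac{n+1}{n+2}D$ is nonnegative on every $C_j$ and on $\overline{NE}(X)_{K_X\geq 0}$, hence nef on all of $\overline{NE}(X)$, giving $r(X,D) \leq \frac{n+1}{n+2}$.

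The main obstacle, and the only place requiring care, is the bookkeeping on the part of the cone where $K_X$ is merely nef-nonnegative: one must avoid assuming $D \cdot C \geq 0$ there and instead exploit the decomposition $K_X + tD = (1-t)K_X + tL$ with both $t$ and $1-t$ nonnegative. Once that is handled, the extremal-ray estimate is the short computation above, driven entirely by the two bounds $-K_X \cdot C_j \leq n+1$ from the cone theorem and $L \cdot C_j \geq 1$ from ampleness of $K_X + D$.
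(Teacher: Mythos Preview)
Your argument is correct and is essentially the same as the paper's: the paper packages the key step as Lemma~\ref{lem1} (that $K_X+(n+1)H$ is nef for any ample $H$), applies it with $H=K_X+D$, and divides the resulting nef class $(n+2)K_X+(n+1)D$ by $n+2$; your decomposition $K_X+tD=(1-t)K_X+tL$ with $t=\tfrac{n+1}{n+2}$ is exactly $\tfrac{1}{n+2}\bigl(K_X+(n+1)L\bigr)$, and your curve-by-curve estimate reproves Lemma~\ref{lem1} inline. The only cosmetic difference is that you carry out the cone-theorem bookkeeping directly rather than citing the lemma.
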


\begin{proof}
Since $K_{X}+D$ is ample by Lemma \ref{lem1} we know that
$K_{X}+(n+1)(K_{X}+D)$ is nef. In particular $(n+2)K_{X}+(n+1)D$ is
nef and then the result follows.
\end{proof}

Let us observe that the bound given in Proposition \ref{nef1} is
\emph{sharp}.

\begin{remark}
Take $X=\pp^{n}$ and $D=(n+2)H$ where $H$ is the hyperplane section.
Then $K_{\pp^{n}}+D$ is ample and $r(\pp^{n},D)=\frac{n+1}{n+2}$.
\end{remark}

The following corollary provides a satisfactory answer to Question
\ref{q1}.

\begin{corollary}
Let $X$ and $D$ as in the Proposition \ref{nef1}. Then for any
$\alpha\in(\frac{n+1}{n+2}, 1]$ the divisor $K_{X}+\alpha D$ is
ample.
\end{corollary}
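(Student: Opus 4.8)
The plan is to deduce this corollary directly from Proposition \ref{nef1} together with the openness of ampleness. First I would recall that Proposition \ref{nef1} gives $r(X,D)\leq\frac{n+1}{n+2}$, which by the definition of the nef threshold means precisely that $K_X+tD$ is nef for every $t\in\left(\frac{n+1}{n+2},1\right]$ — indeed $K_X+D$ is ample hence nef, and $K_X+\frac{n+1}{n+2}D$ is nef (it equals $\frac{1}{n+2}\bigl((n+2)K_X+(n+1)D\bigr)$ up to scaling, which the proof of Proposition \ref{nef1} showed to be nef), so convexity of the nef cone gives nefness of $K_X+tD$ for all $t$ in the closed interval $\left[\frac{n+1}{n+2},1\right]$.

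Next I would upgrade nefness to ampleness on the \emph{open} interval. The clean way: write, for $\alpha\in\left(\frac{n+1}{n+2},1\right]$,
\bdism
K_X+\alpha D=\bigl(K_X+tD\bigr)+(\alpha-t)D
\edism
for a suitable $t$ with $\frac{n+1}{n+2}<t<\alpha$; here $K_X+tD$ is nef by the previous paragraph, but $(\alpha-t)D$ need not be ample since $D$ alone carries no positivity assumption. So instead I would write $K_X+\alpha D$ as a convex combination of the ample divisor $K_X+D$ and the nef divisor $K_X+\frac{n+1}{n+2}D$: for $\alpha\in\left(\frac{n+1}{n+2},1\right)$ there is a unique $\lambda\in(0,1)$ with $\alpha=\lambda\cdot 1+(1-\lambda)\cdot\frac{n+1}{n+2}$, and then
\bdism
K_X+\alpha D=\lambda\bigl(K_X+D\bigr)+(1-\lambda)\Bigl(K_X+\tfrac{n+1}{n+2}D\Bigr),
\edism
which is ample plus nef, hence ample (the case $\alpha=1$ being the hypothesis). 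This uses only the elementary facts that the ample cone is open and convex and that ample $+$ nef is ample, which are standard (Proposition 1.3.7 and the surrounding discussion in \cite{Laz1}).

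There is essentially no obstacle here: the corollary is a formal consequence of Proposition \ref{nef1}. The only point requiring a word of care is the endpoint behavior — one must make sure the \emph{strict} inequality $\alpha>\frac{n+1}{n+2}$ is what forces the coefficient $\lambda$ on the ample summand to be strictly positive, so that the sum is genuinely ample rather than merely nef; at $\alpha=\frac{n+1}{n+2}$ one would only get nefness, consistent with the sharpness remark for $\pp^n$. I would present the argument in three or four lines along exactly these lines.
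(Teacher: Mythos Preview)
Your argument is correct and is essentially the same as the paper's, only spelled out in full. The paper's one-line proof asserts that under the hypothesis $K_X+D$ ample one has $r(X,D)=\inf\{t\geq 0: K_X+tD\text{ is ample}\}$; unpacking that equality is exactly your convex-combination step $K_X+\alpha D=\lambda(K_X+D)+(1-\lambda)\bigl(K_X+\tfrac{n+1}{n+2}D\bigr)$ together with ample $+$ nef $=$ ample, so there is no substantive difference in approach.
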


\begin{proof}
The result follows from the fact that in this case the nef threshold
can be obtained as $r(X,D)=\inf\left\{ t \in \rr_{\geq 0} \:|\:
K_{X}+t D \:\text{is ample}\right\}$.
\end{proof}

In the above discussion, we restricted ourselves to pairs $(X, D)$
with $K_{X}+D$ ample. Nevertheless, it is easy to construct examples
of pairs such that $X\backslash D$ admit complete negatively curved
K\"ahler-Einstein metrics of finite volume, but for which $K_{X}+D$
is \emph{not} ample. A wealth of examples is provided by the theory
of locally symmetric varieties and their compactifications, see for
example \cite{Ash}. More concretely, let us consider the following
class of low-dimensional examples. For more details the interested
reader may also refer to Section 4 in \cite{Luca1}.

\begin{example}\label{toroidal}
Let $X^{2}$ be a \emph{smooth} toroidal compactification of a
finite-volume complex-hyperbolic surface and let $D$ be the
compactifying divisor. The set $D$ simply consists of smooth
disjoint elliptic curves. Note that $X\backslash D$ admits a natural
finite-volume K\"ahler-Einstein metric coming from the Bergman
metric on $\cc\mathcal{H}^{2}$. Nevertheless, the line bundle
$K_{X}+D$ is just big and nef since $(K_{X}+D)\cdot D=0$.
\end{example}

The particular class of toroidal compactifications considered in
Example \ref{toroidal} has other remarkable properties. For a proof
of the following fact see Theorem 3.4 in \cite{LucaE}.

\begin{fact}
Let $(X, D)$ be as in Example \ref{toroidal}. Then for any $\alpha$
close enough the divisor $K_{X}+\alpha D$ is ample.
\end{fact}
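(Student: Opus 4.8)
The plan is to produce a single exponent $\alpha_{0}\in(0,1)$ for which $K_{X}+\alpha_{0}D$ is ample; the stated conclusion then follows by a convexity argument. Indeed, for $\alpha\in(\alpha_{0},1)$ one can write
\begin{align}\notag
K_{X}+\alpha D=\frac{\alpha-\alpha_{0}}{1-\alpha_{0}}\,(K_{X}+D)+\frac{1-\alpha}{1-\alpha_{0}}\,(K_{X}+\alpha_{0}D),
\end{align}
a sum of the nef class $\frac{\alpha-\alpha_{0}}{1-\alpha_{0}}(K_{X}+D)$ with a positive multiple of the ample class $K_{X}+\alpha_{0}D$, hence ample. (Here $\alpha=1$ is excluded, consistently with $K_{X}+D$ not being ample, as $(K_{X}+D)\cdot D=0$.) By Proposition \ref{klt} the pair $(X,\alpha D)$ is klt for every $\alpha\in(0,1)$, which we use freely below.

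First I would record that every component $D_{i}$ of $D$ satisfies $D_{i}^{2}<0$. Since $K_{X}+D$ is nef we have $(K_{X}+D)\cdot D_{i}\geq 0$ for each $i$, and as these numbers sum to $(K_{X}+D)\cdot D=0$ they all vanish. Because $K_{X}+D$ is big and nef, $(K_{X}+D)^{2}>0$, so by the Hodge index theorem a nonzero effective curve orthogonal to $K_{X}+D$ cannot be numerically trivial and therefore has negative self-intersection; hence $D_{i}^{2}<0$. In particular $(K_{X}+\alpha D)\cdot D_{i}=-(1-\alpha)D_{i}^{2}>0$ for all $\alpha\in(0,1)$.

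The heart of the argument is to show that $\B_{+}(K_{X}+D)=D$. Since $(K_{X}+D)^{2}>0$, Theorem \ref{Nakamaye} identifies $\B_{+}(K_{X}+D)$ with the union of the irreducible curves $C$ having $(K_{X}+D)\cdot C=0$, and the inclusion $D\subseteq\B_{+}(K_{X}+D)$ is clear from the previous paragraph. Conversely, let $C$ be an irreducible curve with $(K_{X}+D)\cdot C=0$ and $C\not\subseteq D$. Then $K_{X}\cdot C=(K_{X}+D)\cdot C-D\cdot C=-D\cdot C$ with $D\cdot C\geq 0$, and adjunction gives $C^{2}=2p_{a}(C)-2+D\cdot C$; combined with $C^{2}<0$ (Hodge index), $p_{a}(C)\geq 0$ and $D\cdot C\geq 0$, this forces $p_{a}(C)=0$ and $D\cdot C\in\{0,1\}$. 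Thus $C\cong\pp^{1}$, and the complement $C\setminus(C\cap D)$, which is $\pp^{1}$ or $\cc$, maps nonconstantly into $X\setminus D$. But $X\setminus D=\cc\mathcal{H}^{2}/\Gamma$ for a torsion-free lattice $\Gamma$, so any holomorphic map from $\pp^{1}$ or from $\cc$ into $X\setminus D$ lifts, by simple connectedness, to the bounded domain $\cc\mathcal{H}^{2}$ and is therefore constant by Liouville's theorem --- a contradiction. Hence every such $C$ lies in $D$, and $\B_{+}(K_{X}+D)=D$.

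Finally, since $X$ is a surface and $(X,D)$ is log canonical with $K_{X}+D$ nef, the abundance conjecture (Conjecture \ref{abundance}, known for $\dim X\leq 3$) shows $K_{X}+D$ is semi-ample; let $\pi\colon X\to Y$ be the associated morphism, so that $m(K_{X}+D)=\pi^{*}A$ for some $m>0$ and some ample divisor $A$ on $Y$. As $K_{X}+D$ is big, $\pi$ is birational, and the curves contracted by $\pi$ are exactly those with $(K_{X}+D)\cdot C=0$, i.e. the components $D_{i}$ by the previous step. Since $-D\cdot D_{i}=-D_{i}^{2}>0$ for each $i$, the divisor $-D$ has positive degree on every $\pi$-contracted curve, so $-D$ is $\pi$-ample; by the standard criterion for relative ampleness (see e.g. \cite{KM}), $\pi^{*}A-\epsilon D$ is then ample for $0<\epsilon\ll 1$. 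Rescaling, $K_{X}+\bigl(1-\tfrac{\epsilon}{m}\bigr)D=\tfrac{1}{m}\bigl(\pi^{*}A-\epsilon D\bigr)$ is ample, so $\alpha_{0}:=1-\tfrac{\epsilon}{m}$ works and the convexity argument of the first paragraph finishes the proof. I expect the penultimate step --- the identification of $\B_{+}(K_{X}+D)$ with $D$ --- to be the main obstacle: one must exclude the $(-1)$- and $(-2)$-curves meeting $D$ in at most one point, and it is precisely the hyperbolicity of the open ball quotient $X\setminus D$ that rules these out.
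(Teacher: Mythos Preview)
Your proof is correct. The paper itself does not argue the Fact directly; it simply cites Theorem~3.4 of \cite{LucaE}, whose methods, as the paper notes after Corollary~\ref{cor3}, rest on Sakai's notions of semi-stability and $D$-minimality. So there is no in-paper proof to compare against line by line.

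That said, the machinery the paper develops later in Section~\ref{General} gives a closely related route, and it is worth noting how yours differs. Your crucial step---ruling out irreducible curves $C\not\subset D$ with $(K_X+D)\cdot C=0$ by the hyperbolicity of the ball quotient $X\setminus D$---is exactly the verification needed to apply Corollary~\ref{cor3} (equivalently Theorem~\ref{tian}): once you know every null curve is a component $D_i$, you have $K_X\cdot D_i=-D_i^{2}>0$, so no curve satisfies $(K_X+D)\cdot C=0$ and $K_X\cdot C\leq 0$, and Theorem~\ref{tian} gives ampleness for all $\alpha\in(\tfrac{3}{4},1)$ via the cone theorem and the base point free theorem. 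You instead finish by invoking abundance for lc surfaces to get the semi-ample contraction $\pi$, observing that $-D$ is $\pi$-ample (since $-D\cdot D_i=-D_i^{2}>0$), and concluding by the standard relative-ampleness criterion. Both endings are perfectly valid; yours is a bit more hands-on and avoids the cone theorem entirely, while the paper's route yields the explicit threshold $\tfrac{3}{4}$ immediately. The detour through $\B_+(K_X+D)$ via Theorem~\ref{Nakamaye} is harmless but not strictly needed: what you actually use downstream is only the description of the null curves, not the augmented base locus itself. Likewise, the klt remark is not used, since you appeal to abundance for the lc pair $(X,D)$ rather than to Theorem~\ref{bpf}.
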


It is therefore natural to address the following set of problems.

\begin{question}\label{q14}
Let $X^{n}$ be a smooth variety and let $D$ be a reduced effective
divisor such that $K_{X}+D$ is big and nef. Do we always have $r(X,
D)<1$? Moreover, if $r(X, D)<1$, can we find $\alpha_{n}<1$
depending only on $n$ such that $K_{X}+\alpha D$ is big and nef for
any $\alpha\in\left(\alpha_{n},1\right]$? Finally, can we
characterize the pairs $(X, D)$ for which $K_{X}+\alpha D$ is ample
for $\alpha$ close enough to one?
\end{question}

When dealing with these kind of questions is probably best to start
analyzing a weak notion of positivity: bigness. Thus, analogously to
the nef threshold, we define the pseudo-effective threshold.

\begin{definition}
Let $X$ be a smooth projective variety and let $D$ be an integral
divisor such that $K_{X}+tD$ is big for some $t\in\rr_{\geq 0}$. The
pseudo-effective threshold is \bdism \tau(X,D):=\inf\left\{ t \in
\rr_{\geq 0} \:|\: K_{X}+t D \:\text{is pseudo-effective}\right\}.
\edism
\end{definition}

Recall that the big cone is open and its closure is the cone of
pseudo-effective divisors, see Theorem 2.2.26 in \cite{Laz1}. Then
if $K_{X}+D$ is big we immediately obtain that $\tau(X,D)<1$. Since
nef divisors are also pseudo-effective we have that $\tau(X,D)\leq
r(X,D)$. Let us construct an example for which these numerical
invariants are actually different, compare with Question \ref{q14}.

\begin{example}\label{gabriele}
Let $X$ a smooth projective surface and $D$ a smooth irreducible
divisor such that $K_{X}+D$ is ample. Let $\pi:Y\rightarrow X$ be
the blow up of a point in $D$. Let $D'$ be the strict transform of
$D$ on $Y$. Then $K_{Y}+D'$ is big and nef but $K_{Y}+\alpha D'$ is
not nef for any $\alpha<1$. Then $r(Y,D')=1$ but $\tau(Y,D')<1$.
\end{example}

The phenomena explained in Example \ref{gabriele} relies on the fact
that, differently from the cone of big divisor, the cone of nef
divisors is \emph{closed} being the closure of the ample cone, see
Theorem 1.4.23 in \cite{Laz1}. Thus, we do not expect the nefness to
hold when we decrease the coefficients of $D$. Remarkably, the
bigness of $K_{Y}+D^{'}$ in Example \ref{gabriele} does not help in
achieving $r(Y, D^{'})<1$.

We are now interested in trying to characterize the pairs $(X, D)$
with $K_{X}+D$ nef and with nef threshold strictly less that one.
The following result gives a quite satisfactory answer, compare with
Question \ref{q14}.

\begin{proposition}\label{prop}
Let $X$ be a smooth projective variety and let $D$ be a divisor such
that $K_{X}+D$ is nef. Then $r(X,D)<1$ if and only if there are no
irreducible curves $C$ such that $(K_{X}+D)\cdot C=0$ and
$K_{X}\cdot C<0$.
\end{proposition}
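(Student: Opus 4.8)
The plan is to establish both implications. For the easy direction, suppose there exists an irreducible curve $C$ with $(K_X+D)\cdot C=0$ and $K_X\cdot C<0$. For any $t<1$ we have $(K_X+tD)\cdot C=(K_X+D)\cdot C-(1-t)D\cdot C=-(1-t)D\cdot C$. Since $K_X+D$ is nef and $K_X\cdot C<0$, we get $D\cdot C=(K_X+D)\cdot C-K_X\cdot C>0$, so $(K_X+tD)\cdot C<0$ and $K_X+tD$ is not nef for any $t<1$; hence $r(X,D)=1$. This contraposition gives one direction.

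For the converse, assume there are no such curves; I want to produce some $t<1$ with $K_X+tD$ nef. The natural tool is the Cone Theorem (Theorem \ref{cone}) applied to the smooth variety $X$ with $D=\emptyset$: there are countably many rational curves $C_j$ with $0<-K_X\cdot C_j\le n+1$ generating the $K_X$-negative part of $\overline{NE}(X)$. A curve class $\gamma\in\overline{NE}(X)$ on which $K_X+tD$ could fail to be nonnegative for $t$ close to $1$ must, by nefness of $K_X+D$, satisfy $(K_X+D)\cdot\gamma$ small and $D\cdot\gamma$ controlling the deficit; the real constraint comes from the extremal rays $C_j$ where $K_X\cdot C_j<0$, since on the rest of the cone $K_X$ is nonnegative and $K_X+tD=t(K_X+D)+(1-t)K_X$ is then a positive combination of nef-or-nonnegative classes. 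So I would write, for an arbitrary curve $C$, the decomposition from Lemma \ref{lem1}'s proof: $C\equiv\sum a_i C_i+F$ with $a_i>0$, $C_i$ among the $C_j$ (so $0>K_X\cdot C_i\ge-(n+1)$) and $K_X\cdot F\ge0$. Then $(K_X+tD)\cdot C=t(K_X+D)\cdot C+(1-t)K_X\cdot C$; the first term is $\ge0$, and the second is $\ge-(1-t)\sum a_i(n+1)$ from the $C_i$ plus a nonnegative contribution from $F$. I need to bound this below, which requires a lower bound on $(K_X+D)\cdot C_i$.

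The key point — and the main obstacle — is to bound the finitely relevant quantity $(K_X+D)\cdot C_i$ away from zero for the extremal rays with $K_X\cdot C_i<0$. By hypothesis $(K_X+D)\cdot C_i\ne 0$ for every such $C_i$ (that is precisely the "no curve with $(K_X+D)\cdot C=0$, $K_X\cdot C<0$" assumption), and $(K_X+D)\cdot C_i\ge0$ by nefness, so $(K_X+D)\cdot C_i>0$ for all extremal $C_i$ with $K_X\cdot C_i<0$. The subtlety is that there may be countably many such rays, so I cannot immediately take a minimum; the standard fix is that the Cone Theorem's rays are \emph{locally discrete} in the half-space $(K_X)_{<0}$, and more usefully, by the contraction/rationality theorem the value $r(X,D)=\inf\{t\mid K_X+tD \text{ nef}\}$ is itself computed by only finitely many extremal rays (the nef threshold of a nef-plus-effective combination is rational and extremal). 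Concretely, I would argue: if $r(X,D)=1$ then, because the nef cone is closed (Theorem 1.4.23 in \cite{Laz1}), $K_X+D$ lies on the boundary of the nef cone, so there is an extremal curve class $C$ (from the Cone Theorem, since $K_X+D$ nef forces any such supporting class to be $K_X$-negative) with $(K_X+D)\cdot C=0$; and on that class $K_X\cdot C=(K_X+D)\cdot C-D\cdot C=-D\cdot C$. If $D\cdot C>0$ this is a curve with $(K_X+D)\cdot C=0$ and $K_X\cdot C<0$, contradicting the hypothesis; and $D\cdot C=0$ would force $K_X\cdot C=0$ too, putting $C$ in the $K_X$-nonnegative part where $K_X+tD=t(K_X+D)+(1-t)K_X$ stays nef for all $t\in[0,1]$, contradicting that this ray obstructs lowering $t$. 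Hence $r(X,D)<1$. I expect the careful extraction of the obstructing extremal ray — justifying that when $K_X+tD$ first fails to be nef the failure is witnessed by a $K_X$-negative extremal rational curve — to be the step needing the most care, and it is exactly where the Cone Theorem for the smooth pair $(X,\emptyset)$ does the work.
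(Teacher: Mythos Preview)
Your forward direction is fine and matches the paper. For the converse you have exactly the right setup: the Cone Theorem decomposition $C\equiv\sum a_iC_i+F$ with $0>K_X\cdot C_i\geq-(n+1)$ and $K_X\cdot F\geq0$, together with the observation that the hypothesis forces $(K_X+D)\cdot C_i>0$ for every such $C_i$. But you then treat the uniform lower bound on $(K_X+D)\cdot C_i$ as a genuine obstacle and detour into an argument about the boundary of the nef cone. The missing observation is simply \emph{integrality}: $K_X+D$ is an integral Cartier divisor on the smooth variety $X$ and each $C_i$ is a curve, so $(K_X+D)\cdot C_i\in\zz$, and hence $(K_X+D)\cdot C_i\geq1$ automatically. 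Plugging this into your own inequality $(K_X+tD)\cdot C_i=t(K_X+D)\cdot C_i+(1-t)K_X\cdot C_i\geq t-(1-t)(n+1)$ gives nonnegativity for $t\geq\frac{n+1}{n+2}$; combined with $(K_X+tD)\cdot F\geq0$ this shows $K_X+\frac{n+1}{n+2}D$ is nef. This is exactly the paper's argument, and it is why the paper immediately records the dichotomy $r(X,D)=1$ or $r(X,D)\leq\frac{n+1}{n+2}$ in Corollary~\ref{gap}. (The paper treats curves with $(K_X+D)\cdot C=0$ separately via $(K_X+\alpha D)\cdot C=(1-\alpha)K_X\cdot C\geq0$, but as you can check the decomposition argument already covers them.)

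Your alternative route has real gaps as written. From $r(X,D)=1$ you correctly deduce that $K_X+D$ is nef but not ample, hence there is a nonzero class $\gamma\in\overline{NE}(X)$ with $(K_X+D)\cdot\gamma=0$; but the claim that any such supporting class must be $K_X$-negative is unjustified, and the later step ``contradicting that this ray obstructs lowering $t$'' is circular since you have not shown that \emph{this particular} $\gamma$ witnesses the failure of nefness for $t<1$. One can rescue the approach by taking a limit of (normalized) obstructing classes $\gamma_\epsilon$ with $(K_X+(1-\epsilon)D)\cdot\gamma_\epsilon<0$ and then passing to an extremal ray inside the resulting face, but this is considerably more work than the one-line integrality fix above, and at the end you would still need to extract an actual irreducible curve from a class in $\overline{NE}(X)$.
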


\begin{proof}
Suppose $r(X,D)<1$. Let $C$ be an irreducible curve such
$(K_{X}+D)\cdot C=0$. In particular we have that $D\cdot
C=-K_{X}\cdot C$. We want to prove that $K_{X}\cdot C\geq 0$. Let
$\alpha$ be close to one such that $K_{X}+\alpha D$ is nef. Then
\bdism (K_{X}+\alpha D)\cdot C=(1-\alpha)K_{X}\cdot C\geq 0. \edism
Conversely suppose that there are no irreducible curves $C$ with the
properties $(K_{X}+D)\cdot C=0$ and $K_{X}\cdot C<0$. We would like
to show that $r(X,D)<1$. Let $C'$ be an irreducible curve such that
$(K_{X}+D)\cdot C'>0$. By the same argument in the proof of Lemma
\ref{lem1} we can write $C'\equiv a_{1}C_{1}+\dots +a_{r}C_{r}+F$
where $a_{i}$ are positive numbers, $C_{i}$ are irreducible curves
with $0>K_{X}\cdot C_{i}\geq -(n+1)$ and $K_{X}\cdot F\geq 0$. Since
$F$ is a limit of effective 1-cycles and $K_{X}+D$ is nef we have
that $(K_{X}+D)\cdot F\geq 0$. By assumption if $(K_{X}+D)\cdot
C_{i}=0$ then $K_{X}\cdot C_{i}\geq 0$. In particular
$\left(K_{X}+(n+1)(K_{X}+D)\right)\cdot C'\geq 0$. This implies that
$(K_{X}+\frac{n+1}{n+2}D)\cdot C'\geq 0$ for any curve $C'$ such
that $(K_{X}+D)\cdot C'>0$. Then we need only to check the nefness
of $K_{X}+\alpha D$ on irreducible curve $C$ such that
$(K_{X}+D)\cdot C=0$. The same computation as before tells us that
$(K_{X}+\alpha D)\cdot C=(1-\alpha)K_{X}\cdot C\geq 0$.
\end{proof}

Let us observe that Proposition \ref{prop} implies the following.

\begin{corollary}\label{gap}
Let $X$ be a smooth projective variety and let $D$ be a divisor such
that $K_{X}+D$ is nef. Then either $r(X, D)=1$ or $r(X,
D)\leq\frac{n+1}{n+2}$.
\end{corollary}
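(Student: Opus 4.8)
The plan is to derive Corollary~\ref{gap} as an immediate consequence of Proposition~\ref{prop} together with the uniform bound already established in Proposition~\ref{nef1}. The dichotomy to prove is: either the nef threshold $r(X,D)$ equals $1$, or it is at most $\frac{n+1}{n+2}$. So suppose $r(X,D)\neq 1$; since $K_X+D$ is nef we automatically have $r(X,D)\leq 1$, hence $r(X,D)<1$. By Proposition~\ref{prop} this is equivalent to the statement that there is no irreducible curve $C$ with $(K_X+D)\cdot C=0$ and $K_X\cdot C<0$.

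The key step is then to rerun the argument in the converse direction of the proof of Proposition~\ref{prop}, but keeping track of the explicit coefficient. Indeed, that proof shows that under the hypothesis ``no irreducible curve $C$ with $(K_X+D)\cdot C=0$ and $K_X\cdot C<0$,'' the divisor $K_X+\frac{n+1}{n+2}D$ is nef: for any curve $C'$ with $(K_X+D)\cdot C'>0$ one writes $C'\equiv a_1C_1+\dots+a_rC_r+F$ via the Cone Theorem, with $0>K_X\cdot C_i\geq-(n+1)$ and $(K_X+D)\cdot F\geq 0$, so that $\bigl(K_X+(n+1)(K_X+D)\bigr)\cdot C'\geq 0$, i.e. $(K_X+\frac{n+1}{n+2}D)\cdot C'\geq 0$; and for curves $C$ with $(K_X+D)\cdot C=0$ one has $(K_X+\alpha D)\cdot C=(1-\alpha)K_X\cdot C\geq 0$ for every $\alpha<1$, in particular for $\alpha=\frac{n+1}{n+2}$. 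Therefore $r(X,D)\leq\frac{n+1}{n+2}$, which is exactly the claimed alternative. In fact the second half of this reasoning is already recorded verbatim inside the proof of Proposition~\ref{prop}, so the corollary is essentially a matter of reading off the constant that proof produces.

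There is no real obstacle here; the only thing to be careful about is making explicit that the argument in Proposition~\ref{prop} does not merely prove $r(X,D)<1$ but actually produces the concrete bound $\frac{n+1}{n+2}$, so that the gap $\bigl(\frac{n+1}{n+2},1\bigr)$ genuinely contains no possible value of $r(X,D)$. I would phrase the proof as: if $r(X,D)<1$ then by Proposition~\ref{prop} no curve $C$ with $(K_X+D)\cdot C=0$ satisfies $K_X\cdot C<0$, and then the computation in the proof of that proposition gives $(K_X+\frac{n+1}{n+2}D)\cdot C'\geq 0$ for all irreducible curves $C'$, hence $K_X+\frac{n+1}{n+2}D$ is nef and $r(X,D)\leq\frac{n+1}{n+2}$. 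Combining with the trivial observation $r(X,D)\leq 1$ gives the stated dichotomy.

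\begin{proof}
Since $K_{X}+D$ is nef we have $r(X,D)\leq 1$, so we may assume $r(X,D)<1$ and must show $r(X,D)\leq\frac{n+1}{n+2}$. By Proposition \ref{prop} there is no irreducible curve $C$ with $(K_{X}+D)\cdot C=0$ and $K_{X}\cdot C<0$. As in the proof of Proposition \ref{prop}, for any irreducible curve $C'$ with $(K_{X}+D)\cdot C'>0$ we may write $C'\equiv a_{1}C_{1}+\dots +a_{r}C_{r}+F$ with $a_{i}>0$, with $C_{i}$ irreducible curves satisfying $0>K_{X}\cdot C_{i}\geq -(n+1)$, and with $K_{X}\cdot F\geq 0$; since $F$ is a limit of effective $1$-cycles and $K_{X}+D$ is nef we also have $(K_{X}+D)\cdot F\geq 0$. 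Our assumption forces $K_{X}\cdot C_{i}\geq 0$ whenever $(K_{X}+D)\cdot C_{i}=0$, and therefore $\bigl(K_{X}+(n+1)(K_{X}+D)\bigr)\cdot C'\geq 0$, i.e. $(K_{X}+\tfrac{n+1}{n+2}D)\cdot C'\geq 0$. For an irreducible curve $C$ with $(K_{X}+D)\cdot C=0$ one has $D\cdot C=-K_{X}\cdot C$, and since $(K_{X}+D)\cdot C=0$ with no obstruction implies $K_{X}\cdot C\geq 0$, we get $(K_{X}+\tfrac{n+1}{n+2}D)\cdot C=\tfrac{1}{n+2}K_{X}\cdot C\geq 0$. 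Hence $K_{X}+\tfrac{n+1}{n+2}D$ is nef and $r(X,D)\leq\frac{n+1}{n+2}$.
\end{proof}
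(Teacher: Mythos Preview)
Your proof is correct and follows exactly the approach the paper intends: the corollary is stated immediately after Proposition~\ref{prop} with the remark that it follows from that proposition, and you have simply made explicit that the converse direction of that proof actually yields the nefness of $K_{X}+\tfrac{n+1}{n+2}D$ (not merely $r(X,D)<1$). There is nothing to add.
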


The reasoning given in Proposition \ref{prop} is easily adapted to
understand the obstructions for $K_{X}+\alpha D$ to be
\emph{strictly} nef. In fact, we can state:

\begin{proposition}\label{prop2}
Let $X$ be a smooth projective variety and let $D$ be a divisor such
that $K_{X}+D$ is nef. Then $K_{X}+\alpha D$ is strictly nef for
$\alpha\in(\frac{n+1}{n+2}, 1)$ if and only if there are no
irreducible curves $C$ such that $(K_{X}+D)\cdot C=0$ and
$K_{X}\cdot C\leq0$.
\end{proposition}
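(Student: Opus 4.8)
The proof of Proposition \ref{prop2} will closely parallel that of Proposition \ref{prop}, with the inequalities tightened to track strict positivity. The plan is to argue both implications by the same curve-decomposition technique used there.

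\medskip

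\textbf{The forward direction.} Suppose $K_X+\alpha D$ is strictly nef for every $\alpha\in(\frac{n+1}{n+2},1)$. Let $C$ be an irreducible curve with $(K_X+D)\cdot C=0$; I must show $K_X\cdot C>0$. As in Proposition \ref{prop}, the relation $(K_X+D)\cdot C=0$ gives $D\cdot C=-K_X\cdot C$, hence for any admissible $\alpha$,
\begin{displaymath}
0<(K_X+\alpha D)\cdot C=(1-\alpha)K_X\cdot C,
\end{displaymath}
and since $1-\alpha>0$ this forces $K_X\cdot C>0$, in particular $K_X\cdot C\not\leq 0$. So no curve with $(K_X+D)\cdot C=0$ and $K_X\cdot C\leq 0$ can exist.

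\medskip

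\textbf{The converse.} Assume there is no irreducible curve $C$ with $(K_X+D)\cdot C=0$ and $K_X\cdot C\leq 0$; fix $\alpha\in(\frac{n+1}{n+2},1)$ and let $C'$ be an arbitrary irreducible curve. As in the proof of Proposition \ref{prop}, write via the Cone Theorem $C'\equiv a_1C_1+\dots+a_rC_r+F$ with $a_i>0$, the $C_i$ rational with $0>K_X\cdot C_i\geq-(n+1)$, and $K_X\cdot F\geq 0$; moreover $(K_X+D)\cdot F\geq 0$ since $F$ is a limit of effective $1$-cycles and $K_X+D$ is nef. The key point is to control $(K_X+\alpha D)\cdot C_i$ from below \emph{strictly} when possible and then handle the boundary case $(K_X+D)\cdot C'=0$ separately. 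Concretely, I expect to split into two cases exactly as in Proposition \ref{prop}: if $(K_X+D)\cdot C'>0$, the estimate $(K_X+\frac{n+1}{n+2}D)\cdot C'\geq 0$ from Proposition \ref{prop} together with $\alpha>\frac{n+1}{n+2}$ and $(K_X+D)\cdot C'>0$ upgrades to $(K_X+\alpha D)\cdot C'>0$ (writing $K_X+\alpha D$ as a positive combination of $K_X+\frac{n+1}{n+2}D$ and $K_X+D$); if instead $(K_X+D)\cdot C'=0$, then by hypothesis $K_X\cdot C'>0$, so $(K_X+\alpha D)\cdot C'=(1-\alpha)K_X\cdot C'>0$.

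\medskip

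\textbf{Expected main obstacle.} The only delicate point is the case $(K_X+D)\cdot C'>0$: one needs the passage from a non-strict lower bound at the threshold $\frac{n+1}{n+2}$ to a strict bound at a larger $\alpha$. This works because $K_X+\alpha D=\lambda(K_X+\tfrac{n+1}{n+2}D)+\mu(K_X+D)$ for suitable $\lambda\geq 0$, $\mu>0$ with $\lambda+\mu$ normalizing the $K_X$-coefficient — so $(K_X+\alpha D)\cdot C'\geq \mu\,(K_X+D)\cdot C'>0$. One must double-check that the coefficients $\lambda,\mu$ are genuinely nonnegative for $\alpha$ in the stated range and that $\mu>0$ strictly; this is a routine linear computation but is where the precise interval $(\frac{n+1}{n+2},1)$ enters, and it explains why strict nefness is claimed only on the open interval rather than up to $\alpha=1$.
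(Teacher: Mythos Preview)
Your proposal is correct and follows essentially the approach the paper intends: the paper omits the proof of Proposition~\ref{prop2} entirely, remarking only that the reasoning of Proposition~\ref{prop} ``is easily adapted,'' and your argument is precisely that adaptation. Your convex-combination step---writing $K_X+\alpha D=\lambda(K_X+\tfrac{n+1}{n+2}D)+\mu(K_X+D)$ with $\lambda\geq 0$, $\mu>0$ to upgrade the non-strict bound from Proposition~\ref{prop} (equivalently Corollary~\ref{gap}) to a strict one---is a clean and correct way to make the ``adaptation'' explicit; the cone-theorem decomposition you set up is then not actually needed in your write-up, since you are invoking the conclusion of Proposition~\ref{prop} rather than rerunning its proof.
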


We are now ready to address the last problem considered in Question
\ref{q14}.

\begin{theorem}\label{tian}
Let $X$ be a smooth projective variety and let $D$ be a reduced
effective divisor with simple normal crossing support such that
$K_{X}+D$ is big and nef. Then $K_{X}+\alpha D$ is ample for
$\alpha\in(\frac{n+1}{n+2}, 1)$ if and only if there are no
irreducible curves $C$ such that $(K_{X}+D)\cdot C=0$ and
$K_{X}\cdot C\leq0$.
\end{theorem}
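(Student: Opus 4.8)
The plan is to combine Proposition \ref{prop2} with the base-point free theorem and its consequence Corollary \ref{strictly}. The idea is that $K_X+\alpha D$ is a $\qq$-divisor (for rational $\alpha$) which, under the stated hypotheses, is big, strictly nef, and part of a klt pair; then Corollary \ref{strictly} forces ampleness. The only subtlety is that $\alpha$ ranges over a real interval, but ampleness of $\rr$-divisors for an open interval follows once we have it on a dense set of rationals, since the ample cone is open.

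First I would observe that for $\alpha\in(\frac{n+1}{n+2},1)$ the pair $(X,\alpha D)$ is klt: indeed $D$ has simple normal crossing support and coefficients equal to $1$, so $\alpha D$ has all coefficients $\alpha<1$, hence klt by Proposition \ref{klt}. Next, since $K_X+D$ is big, bigness is an open condition (the big cone is open, Theorem 2.2.26 in \cite{Laz1}), so $K_X+\alpha D$ is still big for $\alpha$ sufficiently close to $1$; more precisely, writing $K_X+\alpha D = \alpha(K_X+D) + (1-\alpha)K_X$ and using that $K_X+D$ is big while any divisor perturbs a big divisor slightly, $K_X+\alpha D$ is big for all $\alpha$ in an interval $(\beta,1)$. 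Actually a cleaner route: $K_X+\alpha D$ for $\alpha>\frac{n+1}{n+2}$ is nef (by Corollary \ref{gap}, since under our curve hypothesis $r(X,D)\le\frac{n+1}{n+2}$ by Proposition \ref{prop}) and it remains to check bigness — but a nef divisor is big iff its top self-intersection is positive, and $(K_X+\alpha D)^n$ is a polynomial in $\alpha$ that is positive at $\alpha=1$ (since $(K_X+D)^n>0$ as $K_X+D$ is big and nef), hence positive for $\alpha$ near $1$. So shrink the interval if necessary so that $K_X+\alpha D$ is big, nef and (by Proposition \ref{prop2}) strictly nef for all $\alpha$ in it.

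Then for rational $\alpha$ in this interval, some integer multiple $m(K_X+\alpha D)$ is a Cartier divisor that is big and strictly nef, and $(X,\alpha D)$ is klt, so Corollary \ref{strictly} applies and $K_X+\alpha D$ is ample. Finally, to pass from rational to all real $\alpha\in(\frac{n+1}{n+2},1)$: given any such real $\alpha_0$, pick rationals $\alpha_1<\alpha_0<\alpha_2$ in the interval with $K_X+\alpha_i D$ ample; then $K_X+\alpha_0 D$ is a positive convex combination of $K_X+\alpha_1 D$ and $K_X+\alpha_2 D$ (namely $K_X+\alpha_0D = t(K_X+\alpha_1D)+(1-t)(K_X+\alpha_2D)$ for suitable $t\in(0,1)$), hence ample since the ample cone is convex. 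For the converse direction, if there were a curve $C$ with $(K_X+D)\cdot C=0$ and $K_X\cdot C\le 0$, then $(K_X+\alpha D)\cdot C=(1-\alpha)K_X\cdot C\le 0$ for $\alpha<1$, contradicting ampleness; this is immediate.

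The main obstacle is organizing the bigness bookkeeping cleanly: one must ensure that \emph{simultaneously} bigness, nefness and strict nefness hold on a genuine sub-interval up to $1$, and that this sub-interval can be taken to be all of $(\frac{n+1}{n+2},1)$ rather than just a neighborhood of $1$. For nefness and strict nefness this is exactly Propositions \ref{prop} and \ref{prop2} (they give it on the full interval), so the only thing to worry about is bigness below $1$ — but nefness plus $(K_X+\alpha D)^n>0$ handles it, and one checks $(K_X+\alpha D)^n>0$ throughout $(\frac{n+1}{n+2},1)$ by noting that this polynomial in $\alpha$ is nonnegative on a nef divisor at every point of the interval and strictly positive at one point, combined with the fact that a nef divisor with vanishing top self-intersection cannot be strictly nef in a way that — more directly — one simply invokes that $K_X+\alpha D$ nef and strictly nef already forces every positive-dimensional subvariety to have positive intersection, so in particular $(K_X+\alpha D)^n>0$; thus bigness is automatic from strict nefness plus nefness. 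With that remark the proof is essentially a two-line application of Corollary \ref{strictly} plus a convexity argument.
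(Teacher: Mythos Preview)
Your overall strategy matches the paper's: reduce to strict nefness via Proposition \ref{prop2}, observe that $(X,\alpha D)$ is klt, and then invoke the base-point free theorem (through Corollary \ref{strictly}) to upgrade big $+$ strictly nef to ample. Your treatment of real versus rational $\alpha$ via convexity of the ample cone is in fact more careful than the paper, which applies Theorem \ref{bpf} without comment.

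However, your bigness argument has a genuine gap. The final claim --- that ``$K_X+\alpha D$ nef and strictly nef already forces every positive-dimensional subvariety to have positive intersection, so in particular $(K_X+\alpha D)^n>0$'' --- is false. Strict nefness only controls intersections with \emph{curves}; it says nothing about higher-dimensional subvarieties and does not force $L^n>0$. Mumford's example (recalled in Section \ref{Modulo} of this very paper) exhibits a strictly nef divisor on a surface with top self-intersection zero. So you cannot deduce bigness from strict nefness alone, and your earlier polynomial argument only gives bigness \emph{near} $\alpha=1$, which after ``shrinking the interval'' does not prove the theorem on all of $(\frac{n+1}{n+2},1)$.

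The fix is easy and is what the paper does. Once you know $r(X,D)\le\frac{n+1}{n+2}$ (Proposition \ref{prop}), the divisor $K_X+\frac{n+1}{n+2}D$ is nef, hence pseudo-effective. For $\alpha\in(\frac{n+1}{n+2},1)$ write
\[
K_X+\alpha D \;=\; t\,(K_X+D)\;+\;(1-t)\Bigl(K_X+\tfrac{n+1}{n+2}D\Bigr),\qquad t\in(0,1);
\]
since $K_X+D$ is big and the other term is pseudo-effective, the sum is big. Equivalently, $\tau(X,D)\le r(X,D)\le\frac{n+1}{n+2}$, which is exactly how the paper phrases it. With this one-line replacement your argument goes through.
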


\begin{proof}
Suppose $K_{X}+\alpha D$ to be ample. Let $C$ be an irreducible
curve such that $(K_{X}+D)\cdot C=0$. Then
\begin{align}\notag
(K_{X}+\alpha D)\cdot C=(1-\alpha)K_{X}\cdot C>0.
\end{align}
Conversely, by Proposition \ref{prop2} we know that $K_{X}+\alpha D$
is strictly nef for $\alpha\in (\frac{n+1}{n+2}, 1)$. Since $\tau(X,
D)\leq r(X,D)$ we conclude that $K_{X}+\alpha D$ is big and strictly
nef for $\alpha\in (\frac{n+1}{n+2}, 1)$. Note that $(X,\alpha D)$
is a klt pair. By Theorem \ref{bpf} we know that $K_{X}+\alpha D$ is
semi-ample. Since strictly nef semi-ample divisors are ample the
proof is complete.
\end{proof}

We can now give a proof of Theorem \ref{th1} stated in the
Introduction \ref{introduction}.

\begin{proof}[Proof of Theorem \ref{th1}]
Combine Proposition \ref{luca} and Theorem \ref{tian} with the
analytical results of Campana-Guenancia-P\u aun \cite{Campana1} and
Mazzeo-Rubinstein \cite{Mazzeo2}, more precisely Theorem A in
\cite{Campana1} and Theorem 1.3 in \cite{Mazzeo2}.
\end{proof}

Theorem \ref{tian} nicely applies in low-dimensions. Thus, let us
collect few corollaries concerning complex surfaces.

\begin{corollary}\label{cor2}
Let $X$ be a smooth projective surface and let $D$ be a divisor such
that $K_{X}+D$ is big and nef. Suppose there are no curves $C$ in
$X$ such that $C=\pp^{1}$, $C^{2}=-1$ and $C\cdot D=1$, then
$r(X,D)<1$.
\end{corollary}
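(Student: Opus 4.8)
The plan is to apply Theorem \ref{tian} (or rather its nef-threshold predecessor, Proposition \ref{prop}) by showing that the surface hypothesis rules out exactly the bad curves. First I would observe that on a smooth projective surface with $K_X+D$ big and nef, Proposition \ref{prop} tells us $r(X,D)<1$ if and only if there is no irreducible curve $C$ with $(K_X+D)\cdot C=0$ and $K_X\cdot C<0$. So the whole task is to show that, under the stated hypothesis, no such curve exists. Suppose $C$ is irreducible with $(K_X+D)\cdot C=0$ and $K_X\cdot C<0$. Since $D$ is effective and $(K_X+D)\cdot C=0$, we get $D\cdot C=-K_X\cdot C>0$, so in particular $C$ is not a component of $D$ with negative... more precisely $C\not\subset$ is not forced, but $D\cdot C>0$ is a nonnegative integer that is strictly positive.

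Next I would invoke adjunction on the smooth surface: $2p_a(C)-2 = C^2+K_X\cdot C$, hence $K_X\cdot C = 2p_a(C)-2-C^2 \ge -2-C^2$. Combined with $K_X\cdot C<0$ and $p_a(C)\ge 0$, the only possibility giving $K_X\cdot C<0$ with $C^2$ as large as possible is the case $p_a(C)=0$, i.e. $C\cong\pp^1$, and then $K_X\cdot C = -2-C^2$. Now $K_X\cdot C<0$ forces $C^2>-2$, so $C^2\ge -1$. On the other hand $D\cdot C = -K_X\cdot C = 2+C^2$. I would then argue that $C^2\le 0$: indeed, since $(K_X+D)\cdot C=0$ and $K_X+D$ is nef and big, Theorem \ref{Nakamaye} (Nakamaye) places $C$ inside $\B_+(K_X+D)$, which is a proper closed subset; a curve $C$ with $C^2>0$ would be big, hence not contained in any such proper subset (its class meets every effective divisor positively), a contradiction — so $C^2\le 0$. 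Hence $C^2\in\{-1,0\}$. The case $C^2=0$ gives $K_X\cdot C=-2$ and $D\cdot C=2$; here I would note $p_a(C)=0$ and $C^2=0$ together make $|C|$ a base-point-free pencil exhibiting a fibration $X\to\pp^1$ with $C$ a fiber, so $C$ moves, and since $(K_X+D)\cdot C=0$ with $K_X+D$ nef, $(K_X+D)$ would be zero on a covering family of curves, forcing $(K_X+D)^2=0$, contradicting bigness. This leaves only $C^2=-1$, hence $K_X\cdot C=-1$ and $D\cdot C=1$, i.e. $C\cong\pp^1$, $C^2=-1$, $C\cdot D=1$ — precisely the configuration excluded by hypothesis. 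Therefore no bad curve exists and $r(X,D)<1$ by Proposition \ref{prop}.

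The main obstacle I expect is the case analysis ruling out $C^2=0$: one must be careful that a genus-zero curve with zero self-intersection actually moves in a base-point-free pencil (this uses that on a smooth surface $h^0(\oo_X(C))\ge 2$ when $C^2=0$, $p_a(C)=0$, via Riemann–Roch and the fact that $C$ irreducible of nonnegative self-intersection is nef, so no fixed part), and then that $(K_X+D)\cdot F=0$ on a fiber $F$ of the resulting fibration $f\colon X\to B$ forces $(K_X+D)$ to be numerically trivial along the fibers, so $(K_X+D)^2 = (K_X+D)\cdot(K_X+D)$ can be computed on a multisection and is seen to vanish, contradicting $(K_X+D)^n>0$ with $n=2$. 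An alternative, cleaner route for the whole corollary is simply to quote Nakamaye's theorem once: any irreducible curve $C$ with $(K_X+D)\cdot C=0$ lies in $\B_+(K_X+D)$ and in particular satisfies $C^2\le 0$ (as a curve in a fixed proper subvariety cannot be big), then adjunction plus $K_X\cdot C<0$ forces $C\cong\pp^1$ and $C^2\in\{-1,0\}$, and one disposes of $C^2=0$ as above; I would present it this way to keep the argument short.
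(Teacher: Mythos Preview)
Your reduction to Proposition \ref{prop} is exactly what the paper does, and the overall argument is correct, but you are working much harder than necessary on the key step. The paper dispatches the inequality $C^2<0$ in one line via the Hodge index theorem: since $K_X+D$ is big and nef we have $(K_X+D)^2>0$, and any curve $C$ with $(K_X+D)\cdot C=0$ then lies in the negative-definite part of the intersection form, so $C^2<0$. With $C^2<0$ in hand, adjunction $K_X\cdot C=2p_a(C)-2-C^2$ together with $K_X\cdot C<0$ forces $p_a(C)=0$ and $C^2=-1$ immediately, hence $D\cdot C=1$, and you are done.

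Your route through Nakamaye's theorem to get only $C^2\le 0$, followed by a separate elimination of $C^2=0$ via a moving-pencil argument, is salvageable but roundabout. Note also that your justification for $C^2\le 0$ (``its class meets every effective divisor positively'') is not correct as stated: a big irreducible curve need not intersect every effective divisor positively. What is true is that an irreducible curve $C$ with $C^2>0$ is automatically nef (it meets every other irreducible curve nonnegatively and itself positively), and then Hodge index applied to the pair $(K_X+D,C)$ already gives a contradiction --- which is precisely the paper's argument. So the Hodge index theorem is both the simplest fix for your gap and the replacement for the entire $C^2\ge 0$ case analysis.
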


\begin{proof}
We want to apply Proposition \ref{prop}. Let $C$ be a curve such
that $(K_{X}+D)\cdot C=0$. Then by the Hodge index theorem
$C^{2}<0$. By adjunction we have \bdism K_{X}\cdot C=2g-2-C^{2}.
\edism Suppose $K_{X}\cdot C<0$. Since $C^{2}<0$ this can happen if
and only if $g=0$ and $C^{2}=-1$. But then $D\cdot C=-K_{X}\cdot
C=1$ which contradicts our hypothesis.
\end{proof}

Similarly, we have a nice characterization of the pairs with ample
twisted log-canonical bundles.

\begin{corollary}\label{cor3}
Let $X$ be a smooth projective surface and let $D$ be a reduced
effective divisor with simple normal crossing support such that
$K_{X}+D$ is big and nef. Suppose there are no curves $C$ in $X$
such that $C=\pp^{1}$, $C^{2}=-1$ and $C\cdot D=1$, or $C=\pp^{1}$,
$C^{2}=-2$ and $C\cdot D=0$. Then $K_{X}+\alpha D$ is ample for any
$\alpha\in(\frac{3}{4}, 1)$.
\end{corollary}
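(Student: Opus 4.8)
The plan is to deduce Corollary \ref{cor3} directly from Theorem \ref{tian} by showing that the numerical hypothesis on curves in the statement is equivalent to the condition appearing in that theorem, namely that there are no irreducible curves $C$ with $(K_X+D)\cdot C=0$ and $K_X\cdot C\leq 0$. Since $X$ is a smooth projective surface and $K_X+D$ is big and nef, once this equivalence is established the conclusion that $K_X+\alpha D$ is ample for $\alpha\in(\frac{3}{4},1)$ follows immediately, because for $n=2$ we have $\frac{n+1}{n+2}=\frac{3}{4}$.

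First I would take an irreducible curve $C$ with $(K_X+D)\cdot C=0$. As in the proof of Corollary \ref{cor2}, since $K_X+D$ is big and nef, the Hodge index theorem forces $C^2<0$: indeed $(K_X+D)^2>0$ and $(K_X+D)\cdot C=0$ cannot hold together with $C^2\geq 0$. Next I would invoke the adjunction formula $K_X\cdot C=2g(C)-2-C^2$ where $g(C)$ is the arithmetic genus, which is nonnegative. Now suppose $K_X\cdot C\leq 0$; combined with $C^2<0$ (so $-C^2\geq 1$) and $2g(C)-2\geq -2$, the only possibilities are $g(C)=0$ with $C^2=-1$ (giving $K_X\cdot C=-1$) or $g(C)=0$ with $C^2=-2$ (giving $K_X\cdot C=0$). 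In the first case $C\cong\pp^1$ and $D\cdot C=(K_X+D)\cdot C-K_X\cdot C=0-(-1)=1$; in the second case $C\cong\pp^1$ and $D\cdot C=(K_X+D)\cdot C-K_X\cdot C=0-0=0$. Both are excluded by hypothesis, so no such $C$ exists. This is exactly the hypothesis of Theorem \ref{tian}, hence $K_X+\alpha D$ is ample for $\alpha\in(\frac{3}{4},1)$.

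There is no serious obstacle here: the argument is a routine refinement of Corollary \ref{cor2}, the only extra point being that the inequality $K_X\cdot C\leq 0$ (rather than strict) produces the additional boundary case $C^2=-2$, $g=0$, which accounts for the second family of excluded curves in the statement. The one thing to be slightly careful about is that when $C$ is singular or non-reduced we are using the \emph{arithmetic} genus in adjunction, but since $C$ is taken irreducible and reduced (as in the definition of $\overline{NE}(X)$ and the cone theorem used throughout), $p_a(C)\geq 0$ still holds and $p_a(C)=0$ indeed forces $C\cong\pp^1$, so the classification of the two cases is valid. I would write the proof as a short paragraph mirroring the structure of Corollary \ref{cor2}'s proof, citing Theorem \ref{tian} for the ampleness conclusion.

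\begin{proof}
By Theorem \ref{tian} it suffices to check that there are no irreducible curves $C$ in $X$ such that $(K_X+D)\cdot C=0$ and $K_X\cdot C\leq 0$. Let $C$ be an irreducible curve with $(K_X+D)\cdot C=0$. Since $K_X+D$ is big and nef, the Hodge index theorem gives $C^2<0$. By adjunction $K_X\cdot C=2p_a(C)-2-C^2$, where $p_a(C)\geq 0$ is the arithmetic genus. Assume $K_X\cdot C\leq 0$; then $2p_a(C)-2\leq C^2<0$, and since $-C^2\geq 1$ the only possibilities are $p_a(C)=0$ with $C^2=-1$, or $p_a(C)=0$ with $C^2=-2$. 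In both cases $C\cong\pp^1$. If $C^2=-1$ then $D\cdot C=(K_X+D)\cdot C-K_X\cdot C=1$, and if $C^2=-2$ then $D\cdot C=(K_X+D)\cdot C-K_X\cdot C=0$. Both situations are excluded by hypothesis, so no such $C$ exists. Since for $n=2$ we have $\frac{n+1}{n+2}=\frac{3}{4}$, Theorem \ref{tian} implies that $K_X+\alpha D$ is ample for any $\alpha\in(\frac{3}{4},1)$.
\end{proof}
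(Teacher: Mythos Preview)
Your proof is correct and follows essentially the same approach as the paper: apply Theorem~\ref{tian}, use the Hodge index theorem to get $C^2<0$, and use adjunction to classify the curves with $(K_X+D)\cdot C=0$ and $K_X\cdot C\leq 0$ as exactly the two excluded types. The only cosmetic difference is that the paper first cites Corollary~\ref{cor2} to dispose of the strict case $K_X\cdot C<0$ and then treats $K_X\cdot C=0$ separately, whereas you handle both cases in one pass.
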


\begin{proof}
We want to apply Theorem \ref{tian}. Because of Corollary
\ref{cor2}, it suffices to characterize the curves $C$ such that
$(K_{X}+D)\cdot C=0$ and $K_{X}\cdot C=0$. By the Hodge index
theorem we have $C^{2}<0$ and then by adjunction $K_{X}\cdot C=0$ if
and only if $g=0$ and $C^{2}=-2$. This implies $C\cdot D=0$.
\end{proof}

The interested reader should compare Corollaries \ref{cor2} and
\ref{cor3} with Theorems 3.3 and 3.4 in \cite{LucaE}. The results
presented in \cite{LucaE} contain a geometric characterization of
the pairs $(X^{2}, D)$ with $K_{X}+D$ big and nef. Such results rely
on F. Sakai's notions of semi-stability and $D$-minimality, see
\cite{Sakai}.

It is now interesting to collect a corollary regarding Fano
varieties. Recall that $X$ is called a Fano variety if $-K_{X}$ is
ample.

\begin{corollary}
Let $X$ be a smooth Fano variety. Let $D$ be an effective
divisor such that $K_{X}+D$ is nef. Then
$r(X,D)<1$ if and only if $K_{X}+D$ is ample.
\end{corollary}

\begin{proof}
If $K_{X}+D$ is ample then it is clear that $r(X,D)<1$. Conversely, assume that  $r(X,D)<1$. 
Since $X$ is Fano, we have $K_{X}\cdot C<0$ for any irreducible curve $C$.
By Proposition \ref{prop2} we conclude that $K_{X}+D$ is strictly nef. 
If we now apply Theorem \ref{bpf} to $X$ with $L=K_{X}+D$ and $a=2$, we conclude that a multiple of $L$, say $mL$, is semi-ample. 
Since we have already shown that $L$ is strictly nef we obtain that $L$ is indeed ample.  
\end{proof}

In the discussion above, we mainly focused on the study of the nef
threshold. We now would like to better understand the behavior of
the pseudo-effective threshold. First, if $r(X, D)<1$ then $\tau(X,
D)$ is automatically bounded away from one. It remains to understand
if the same is true in the case $r(X, D)=1$. The fact that $K_{X}+D$
is assumed to be nef turns out to be very useful. In fact, recall
the following theorem of Andreatta, see Theorem 5.1 in \cite{And}.

\begin{theorem}[Andreatta]\label{and}
Let $X$ be a smooth projective variety of dimension $n$ and let $L$
be a big and nef divisor. Then $K_{X}+(n+1)L$ is pseudo-effective.
\end{theorem}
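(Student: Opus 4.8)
The plan is to reduce the statement, by a minimal model program for $K_X$ that contracts only $L$-trivial extremal rays, to the case in which $K_X+(n+1)L$ is nef. Since $L$ is nef, every curve $C$ satisfies $L\cdot C\ge 0$, so the Cone theorem (Theorem \ref{cone}), applied exactly as in the proof of Lemma \ref{lem1}, already gives $(K_X+(n+1)L)\cdot C\ge 0$ \emph{unless} $C$ generates a $K_X$-negative extremal ray $R$ with $L\cdot R=0$; and by Nakamaye's theorem (Theorem \ref{Nakamaye}) every curve in such a ray lies in the augmented base locus $\B_+(L)$, which is a \emph{proper} closed subset of $X$ because $L$ is big. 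So the failure of $K_X+(n+1)L$ to be nef is concentrated on finitely many $K_X$-negative extremal rays spanning a proper face of $\overline{NE}(X)$, and the idea is to contract them one at a time.

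First I would observe that no fibre-type contraction can occur among these rays: if $c_R\colon X\to Y$ contracts such an $R$ with $\dim Y<n$, then $L$ is trivial on the fibres of $c_R$, and since $-K_X$ is $c_R$-ample the relative base point free theorem (Theorem \ref{bpf}) shows that $L$ is numerically a pullback from $Y$, forcing $L^n=0$ and contradicting the bigness of $L$. Hence only divisorial contractions and flips arise. In each such step $c_R\colon X\dashrightarrow X'$ the divisor $L$ is trivial over the base, so it descends (resp.\ transforms) to a divisor $L'$ on $X'$ that is again big and nef; moreover $K_X$ does not decrease, so on a common resolution $p\colon W\to X$, $q\colon W\to X'$ one has $p^*K_X=q^*K_{X'}+F$ with $F\ge 0$ and $p^*L=q^*L'$, and therefore
\[
p^*\bigl(K_X+(n+1)L\bigr)=q^*\bigl(K_{X'}+(n+1)L'\bigr)+F .
\]
Consequently, once $K_{X'}+(n+1)L'$ is known to be pseudo-effective, its $q$-pullback plus the effective divisor $F$ is pseudo-effective on $W$, and pushing forward by $p$ shows the same for $K_X+(n+1)L$ on $X$. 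Iterating, one runs the $K_X$-minimal model program (with scaling) that contracts $L$-trivial rays until reaching a model $X_m$ with a big and nef $L_m$ on which every $K_{X_m}$-negative extremal ray $R$ has $L_m\cdot R>0$, hence $L_m\cdot R\ge 1$; then (granting the length bound $\le n+1$ discussed below) the argument of Lemma \ref{lem1} shows $K_{X_m}+(n+1)L_m$ is nef, in particular pseudo-effective, and unwinding the finitely many birational steps proves the theorem.

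The step I expect to be the main obstacle is the control of this program. First, one must know that it terminates, for which one appeals to termination of the MMP with scaling; here one may replace $L$ by $L+\varepsilon A$ with $A$ ample and $\varepsilon$ small, absorbing the difference into an effective class via Kodaira's lemma, so that standard termination with scaling of an ample divisor applies. More delicately, one must ensure that the bound $n+1$ on the length of the relevant extremal rays survives the passage to the (mildly singular) intermediate models, rather than degrading to the generic klt bound $2n$ of Theorem \ref{cone}; tracking this bound through the contractions — by analysing the fibres of the contractions that occur — is the technical core of the argument and is the content of \cite{And}.
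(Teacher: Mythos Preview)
The paper does not prove this theorem; it is simply quoted from Andreatta (Theorem 5.1 in \cite{And}) and used as input to Proposition~\ref{big1}. There is no in-paper argument to compare against.

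Your outline is nonetheless broadly the strategy of \cite{And}: run a $K_X$-MMP contracting only $L$-trivial extremal rays until one reaches a model on which $K_{X_m}+(n+1)L_m$ is nef, then pull pseudo-effectivity back. Two remarks. First, your implicit step ``$L_m\cdot R>0$, hence $L_m\cdot R\ge 1$'' on the terminal model requires $L_m$ to remain Cartier; this is in fact fine, because at each step the contracted ray is $L$-trivial, so by the contraction theorem (Theorem~3.7(4) in \cite{KM}) the descended or flipped divisor is again a genuine line bundle. Second, you correctly isolate the real obstacle: after the first contraction the variety is only $\qq$-factorial terminal, and Theorem~\ref{cone} as stated here gives only the bound $2n$ in the klt case, which would yield $K_X+2nL$ pseudo-effective rather than $K_X+(n+1)L$. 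In \cite{And} this gap is closed not by a general $n+1$ length bound on singular varieties but by adjunction-theoretic structure results that classify the contractions (and their fibres) arising when the nef value is large; together with the termination issue you flag, this is indeed where the substance lies.
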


It is interesting to notice how Theorem \ref{and} plays a similar
role as Lemma \ref{lem1}. Thus, the idea of the proof of Proposition
\ref{nef1} gives the following.

\begin{proposition}\label{big1}
Let $X$ be a smooth projective variety and let $D$ be a divisor such
that $K_{X}+D$ is big and nef. Then \bdism
\tau(X,D)\leq\frac{n+1}{n+2}. \edism
\end{proposition}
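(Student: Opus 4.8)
The plan is to mimic exactly the proof of Proposition \ref{nef1}, replacing Lemma \ref{lem1} with Andreatta's theorem (Theorem \ref{and}). Since $K_X+D$ is big and nef by hypothesis, I would first apply Theorem \ref{and} with $L=K_X+D$, which immediately gives that $K_X+(n+1)(K_X+D)$ is pseudo-effective.

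Next I would rewrite this divisor: $K_X+(n+1)(K_X+D) = (n+2)K_X+(n+1)D = (n+2)\bigl(K_X+\tfrac{n+1}{n+2}D\bigr)$. Hence $K_X+\tfrac{n+1}{n+2}D$ is pseudo-effective, which by the definition of the pseudo-effective threshold says precisely that $\tau(X,D)\leq \tfrac{n+1}{n+2}$.

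I don't anticipate any real obstacle here — the statement is essentially a formal corollary of Andreatta's theorem, exactly parallel to how Proposition \ref{nef1} follows from Lemma \ref{lem1}. The only point worth a remark is that one should note $K_X+tD$ is big for $t$ near $1$ (since $K_X+D$ is big and bigness is an open condition), so the infimum defining $\tau(X,D)$ is over a nonempty set and the threshold is well-defined; but this is automatic from the hypotheses. A one- or two-sentence proof will suffice.

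\begin{proof}
Since $K_{X}+D$ is big and nef, by Theorem \ref{and} applied with $L=K_{X}+D$ the divisor $K_{X}+(n+1)(K_{X}+D)$ is pseudo-effective. But
\[
K_{X}+(n+1)(K_{X}+D)=(n+2)K_{X}+(n+1)D=(n+2)\left(K_{X}+\tfrac{n+1}{n+2}D\right),
\]
so $K_{X}+\tfrac{n+1}{n+2}D$ is pseudo-effective. By the definition of the pseudo-effective threshold this gives $\tau(X,D)\leq\tfrac{n+1}{n+2}$.
\end{proof}
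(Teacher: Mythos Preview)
Your proof is correct and is exactly the argument the paper intends: the paper explicitly remarks that Theorem \ref{and} plays the role of Lemma \ref{lem1} and that ``the idea of the proof of Proposition \ref{nef1}'' gives the result, which is precisely your computation $K_X+(n+1)(K_X+D)=(n+2)\bigl(K_X+\tfrac{n+1}{n+2}D\bigr)$.
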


As expected, we have:

\begin{corollary}
Let $X$ and $D$ as in the Proposition \ref{big1}. Then for any
$\alpha>\frac{n+1}{n+2}$ the divisor $K_{X}+\alpha D$ is big.
\end{corollary}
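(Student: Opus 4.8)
The statement to prove is the Corollary immediately after Proposition \ref{big1}: if $K_X+D$ is big and nef then $K_X+\alpha D$ is big for any $\alpha > \frac{n+1}{n+2}$.

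The plan is straightforward: invoke Proposition \ref{big1}, which gives $\tau(X,D) \le \frac{n+1}{n+2}$. By definition of the pseudo-effective threshold, this means $K_X + tD$ is pseudo-effective for all $t > \frac{n+1}{n+2}$. But we need bigness, not just pseudo-effectiveness. The key point is that the big cone is the interior of the pseudo-effective cone (Theorem 2.2.26 in \cite{Laz1}, already cited in the excerpt), so it suffices to exhibit, for a given $\alpha > \frac{n+1}{n+2}$, that $K_X + \alpha D$ lies in the interior — equivalently, that it can be written as a big divisor plus a pseudo-effective divisor.

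Concretely: fix $\alpha \in (\frac{n+1}{n+2}, 1]$ and pick $\beta$ with $\frac{n+1}{n+2} < \beta < \alpha$. Write $K_X + \alpha D = (K_X + \beta D) + (\alpha - \beta)D$. Hmm, but $D$ effective only gives pseudo-effective, and $K_X+\beta D$ is pseudo-effective, so the sum is pseudo-effective — still not enough. Better: write $K_X + \alpha D$ as a convex combination $\lambda(K_X+D) + (1-\lambda)(K_X + \beta D)$ where $\alpha = \lambda + (1-\lambda)\beta$, i.e. $\lambda = \frac{\alpha - \beta}{1 - \beta} > 0$. Since $K_X + D$ is big (hence in the interior of the pseudo-effective cone) and $K_X + \beta D$ is pseudo-effective, their positive combination lies in the interior of the pseudo-effective cone, hence is big. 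That's the argument. The only mild subtlety — not really an obstacle — is the edge case $\alpha = 1$, which is immediate since $K_X+D$ is assumed big.

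\begin{proof}
By Proposition \ref{big1} we have $\tau(X,D) \le \frac{n+1}{n+2}$, so $K_X + \beta D$ is pseudo-effective for every $\beta \in (\frac{n+1}{n+2}, 1)$. Fix $\alpha \in (\frac{n+1}{n+2}, 1]$. If $\alpha = 1$ there is nothing to prove, so assume $\alpha < 1$ and choose $\beta$ with $\frac{n+1}{n+2} < \beta < \alpha$. Setting $\lambda = \frac{\alpha - \beta}{1-\beta} \in (0,1)$, one checks that
\begin{align}\notag
K_X + \alpha D = \lambda(K_X + D) + (1-\lambda)(K_X + \beta D).
\end{align}
Here $K_X + D$ is big, hence lies in the interior of the pseudo-effective cone (Theorem 2.2.26 in \cite{Laz1}), while $K_X + \beta D$ is pseudo-effective. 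A positive multiple of an interior point of a convex cone plus any point of the cone again lies in the interior, so $K_X + \alpha D$ is big.
\end{proof}
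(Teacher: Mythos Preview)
Your proof is correct and is exactly the argument the paper leaves implicit: the corollary is stated without proof, relying on Proposition \ref{big1} together with the fact (already recalled in the text) that the big cone is the interior of the pseudo-effective cone. The only cosmetic omission is the range $\alpha>1$, which is immediate since in this setting $D$ is effective and $K_X+\alpha D=(K_X+D)+(\alpha-1)D$ is big plus effective.
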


Let us now briefly discuss another approach to the problem addressed
in Proposition \ref{big1}. Recall the following result of Koll\'ar
\cite{Kol}.

\begin{theorem}[Koll\'ar]\label{Kollar}
Let $X$ be a smooth variety and let $L$ be a big and nef divisor.
Then $K_{X}+mL$ gives a birational map for any
$m\geq\binom{n+2}{2}$.
\end{theorem}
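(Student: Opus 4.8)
The plan is to argue by induction on $n=\dim X$: for two \emph{general} points $x,y\in X$ I will produce a section of $K_X+mL$ vanishing at one of them but not the other, which forces $\phi_{|K_X+mL|}$ to be birational onto its image. The base case $n=1$ is immediate: a big and nef divisor on a smooth curve has degree $\ge 1$, so $\deg(K_X+3L)\ge 2g+1$, and a line bundle of that degree on a smooth projective curve is very ample, in particular it gives a birational (indeed closed) embedding.

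For the inductive step the first move is to manufacture singularities at $x$ and $y$. Since $L$ is nef and big one has $h^0(X,kL)=\tfrac{L^n}{n!}k^n+O(k^{n-1})$, while imposing multiplicity $\ge s$ at a point costs $\binom{s+n-1}{n}=\tfrac{s^n}{n!}+O(s^{n-1})$ linear conditions; using that $L$ is an \emph{integral} big and nef divisor, so $L^n\ge 1$, this yields divisors $D_x,D_y\in|kL|$ with $\operatorname{mult}_xD_x,\operatorname{mult}_yD_y$ of size roughly $k$. From a suitable $\qq$-combination $\Delta\sim_{\qq}\lambda L$ of $D_x$, $D_y$ and a small general member of $|L|$, and then lowering the coefficient until the log canonical threshold of $(X,\Delta)$ is attained, I want to arrange that $(X,\Delta)$ is log canonical with an \emph{isolated} non-klt centre which I may take to be $x$, while remaining non-trivial at $y$. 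When the minimal non-klt centre through $x$ is positive-dimensional, one restricts $L$ to it and invokes the $(n-1)$-dimensional statement — this is where the induction is genuinely used and where the extra positivity is consumed.

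With $\Delta$ in hand the rest is formal: $K_X+mL-\Delta\equiv K_X+(m-\lambda)L$ with $(m-\lambda)L$ big and nef (using $m>\lambda$), so Nadel vanishing gives $H^1\!\big(X,\oo_X(K_X+mL)\otimes\mathcal{J}(X,\Delta)\big)=0$, whence $H^0(X,K_X+mL)\twoheadrightarrow H^0\big(X,\oo_X(K_X+mL)\otimes\oo_X/\mathcal{J}(X,\Delta)\big)$. Since $\mathcal{J}(X,\Delta)$ is cosupported on $\{x,y\}$ and equals the maximal ideal near $x$, one extracts the desired section separating $x$ from $y$; generality of $x,y$ then finishes the induction.

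The numerology works out because the coefficient increases by exactly $n+1$ at the $n$-th inductive step: one unit corrects the adjoint divisor under restriction to a member of $|L|$ (by adjunction $K_{X'}=(K_X+X')|_{X'}$), and $n$ further units of positivity ensure that $x$ lies on a suitable such member and that enough of them exist to separate points sitting on a common section; starting from $\binom{3}{2}=3$ and adding $n+1$ each time gives $\sum_{k=1}^{n+1}k=\binom{n+2}{2}$. The hard part is the middle step: (i) the \emph{tie-breaking} needed to collapse a non-klt \emph{locus} down to a single isolated centre without spending too much of the coefficient $\lambda$, and (ii) keeping track, as one cuts down the dimension, of the fact that $L$ is only big and nef — and so may be numerically trivial along its augmented base locus $\B_+(L)$. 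This last point causes no trouble only because $x$ and $y$ are chosen general, a constraint that must be honoured at every stage of the cutting-down.
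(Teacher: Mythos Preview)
The paper does not prove this statement: Theorem~\ref{Kollar} is quoted as a result of Koll\'ar with a reference to \cite{Kol}, and no argument is supplied. So there is no ``paper's own proof'' to compare against.

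That said, your sketch is in the right family of ideas (multiplier ideals, Nadel vanishing, non-klt centres), but as written it has a genuine gap at the inductive step. You write that when the minimal non-klt centre $Z$ through $x$ is positive-dimensional, ``one restricts $L$ to it and invokes the $(n-1)$-dimensional statement.'' Two problems: first, $Z$ need not have dimension $n-1$ --- it can have any dimension $1\le\dim Z\le n-1$ --- so the recursion is not a simple step-by-step descent; second, and more seriously, $Z$ is in general a \emph{singular} subvariety, so you cannot feed it back into a statement whose hypothesis is ``$X$ smooth.'' Koll\'ar's actual argument in \cite{Kol} does not literally restrict and re-apply the theorem; instead one perturbs $\Delta$ further (adding small multiples of divisors in $|kL|$ with high multiplicity along $Z$) to force the non-klt locus to drop dimension, and one iterates this cutting-down procedure on $X$ itself, keeping track of how much of the coefficient of $L$ is consumed at each stage. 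The bound $\binom{n+2}{2}=\sum_{k=1}^{n+1}k$ emerges from summing the cost of creating a singularity of the right multiplicity on a $k$-dimensional centre for $k=n,n-1,\dots,0$, not from an adjunction-plus-separation bookkeeping of the sort you describe.

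Your acknowledgement of the difficulties (tie-breaking, the behaviour of $L$ along $\B_{+}(L)$) is accurate, but these are not side issues to be waved at --- they are precisely where the work lies, and your sketch does not indicate how they are actually handled. In particular, the genericity of $x,y$ is used to keep them off $\B_+(L)$ so that $L|_Z$ remains big on every centre encountered, but one must also argue that the successive perturbations can be made without destroying the log-canonicity already achieved at the previous stage; this requires the inversion-of-adjunction / connectedness machinery, which your outline omits.
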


Note that, given $L$ as in \ref{Kollar}, then
$K_{X}+\binom{n+2}{2}L$ is not only pseudo-effective but also big.
Thus, by letting $L=K_{X}+D$ and proceeding as in Proposition
\ref{big1} we obtain $\tau(X, D)\leq
\frac{\binom{n+2}{2}}{1+\binom{n+2}{2}}$. Despite the fact that this
new bound is worse than the one given in Proposition \ref{big1},
this new approach, based on Theorem \ref{Kollar}, provides the idea
of how to extend the result when $K_{X}+D$ is only big. In fact, it
strongly suggests that results in effective birationality can help
in bounding the pseudo-effective threshold. Recently Hacon,
M$^{\text{c}}$Kernan and Xu generalized Theorem \ref{Kollar} in
\cite{HMX} and they proved the following.

\begin{theorem}[Hacon-M$^{\text{c}}$Kernan-Xu]\label{hmx1}
Let $X$ be a smooth projective variety and let $D$ be a reduced
effective divisor with simple normal crossing such that $K_{X}+D$ is
big. Then there exists a positive number $\alpha_{n}$ depending only
on the dimension of $X$ such that $K_{X}+\alpha D$ is big for all
$\alpha\in (\alpha_{n}, 1]$.
\end{theorem}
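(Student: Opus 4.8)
The plan is to reduce the statement to the corresponding result for the pseudo-effective threshold, following the same philosophy that drove Proposition \ref{big1}, but using effective birationality as the input rather than Lemma \ref{lem1} or Theorem \ref{and}. The key point is that $K_{X}+\alpha D$ being big for $\alpha$ close to $1$ is equivalent to an upper bound on the pseudo-effective threshold $\tau(X,D)$ that is uniform over all pairs of fixed dimension $n$. So the real content is: \emph{there is a constant $c_n<1$ such that $\tau(X,D)\le c_n$ whenever $K_X+D$ is big and $(X,D)$ is log smooth.}

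First I would reduce to the case where $K_X+D$ is big \emph{and nef}: run a minimal model program for $K_X+D$ (which terminates for this class by \cite{HMX} and the MMP machinery), or more elementarily, replace $K_X+D$ by a big and nef divisor in its stable base locus decomposition; the point is that any adjoint divisor $K_X+D$ that is big admits, after a birational modification $f\colon X'\to X$, a decomposition $f^*(K_X+D)=P+N$ with $P$ big and nef and $N\ge 0$ effective and $f$-exceptional-plus-controlled. One has to be careful that the decreasing of coefficients of $D$ interacts well with $f$; writing $K_{X'}+D'=f^*(K_X+D)+E$ with $D'$ the strict transform (plus a controlled amount of exceptional divisors) and chasing through, the bigness of $K_X+\alpha D$ is implied by the bigness of an analogous twisted adjoint divisor on $X'$ with a boundary still having coefficients in $[0,1]$. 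Second, with $K_{X'}+D'$ big and nef, I would invoke an effective birationality statement — Theorem \ref{Kollar} for the nef case, or its refinement by Hacon--M\textsuperscript{c}Kernan--Xu in \cite{HMX} for the general big case — to conclude that $K_{X'}+m(K_{X'}+D')$ is big for $m\ge\binom{n+2}{2}$ (or the better effective constant), exactly as in the remark following Proposition \ref{big1}. Rearranging $K_{X'}+m(K_{X'}+D')=(m+1)K_{X'}+mD'$ gives bigness of $K_{X'}+\frac{m}{m+1}D'$, hence $\tau(X',D')\le\frac{m}{m+1}<1$ with the bound depending only on $n$. Third, I would push this bound back down to $(X,D)$ via $f_*$: bigness is preserved under pushforward of the relevant adjoint divisors, so $\tau(X,D)\le\tau(X',D')\le\frac{m}{m+1}$, and setting $\alpha_n=\frac{m}{m+1}$ finishes the argument since bigness is preserved as $\alpha$ increases toward $1$.

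The main obstacle is the reduction step: the subtlety is that lowering the coefficients of $D$ and passing to a log resolution or a minimal model do not obviously commute, and one must ensure that after the birational modification the boundary divisor still has all coefficients in $[0,1]$ (so that the klt/effective-birationality hypotheses apply) \emph{and} that bigness of the twisted adjoint divisor downstairs is genuinely controlled by the one upstairs. Concretely, if $f^*(K_X+\alpha D)=K_{X'}+D'_\alpha-E_\alpha$, one needs the negative part $E_\alpha$ to be dominated by the positive contribution coming from $\tau(X',D'_1)<1$, uniformly in the finitely many relevant birational models — this is where the uniformity of the effective birationality constant in \cite{HMX}, valid for the whole class of log smooth pairs of dimension $n$, is essential. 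Once that uniformity is in hand, the numerology is routine. An alternative, cleaner route — which is likely what the authors intend — is to cite \cite{HMX} directly for the statement about $\tau(X,D)$ being uniformly bounded away from $1$, since that boundedness of adjoint thresholds is one of the principal results of that paper; in that case the proof is simply: apply the Hacon--M\textsuperscript{c}Kernan--Xu boundedness theorem to get $\tau(X,D)\le\alpha_n<1$ with $\alpha_n$ depending only on $n$, and observe that $K_X+\alpha D$ is big for every $\alpha\in(\alpha_n,1]$ because the pseudo-effective cone is closed and the big cone is its interior.
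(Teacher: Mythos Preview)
The paper does not give a proof of Theorem \ref{hmx1}: it is quoted as a result of Hacon--M\textsuperscript{c}Kernan--Xu from \cite{HMX}, followed only by the remark that they in fact prove a more general statement for lc pairs with coefficients in a fixed DCC set. Your ``alternative, cleaner route'' at the very end --- simply citing \cite{HMX} for the uniform bound $\tau(X,D)\le\alpha_n<1$ --- is exactly what the paper does, and is the intended content here.

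Your attempted self-contained sketch, on the other hand, has a genuine gap in the reduction step, and you correctly flag it as the main obstacle. If you run an MMP for $(X,D)$ to reach a model with $K_{X'}+D'$ big and nef, then $X'$ will in general only be $\qq$-factorial with klt/dlt singularities, so Theorem \ref{Kollar} as stated (for $X$ smooth) no longer applies directly. More seriously, the MMP you run depends on the boundary: a minimal model for $K_X+D$ need not be one for $K_X+\alpha D$, and controlling the discrepancy terms $E_\alpha$ uniformly in $\alpha$ and over all log smooth pairs of dimension $n$ is precisely the hard content. The uniform effective birationality statement for big lc pairs with DCC coefficients that you would need to invoke \emph{is} the main theorem of \cite{HMX}, so using it as an input to the reduction is circular. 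In short, there is no shortcut here past the full HMX machinery, and the paper accordingly just cites the result.
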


Actually they proved a more general statement when $(X,D)$ is a lc
pair and the coefficients of $D$ are in a fixed DCC set $I$.\\

In the rest of the section we address the following:

\begin{question}\label{q2}
Let $X$ be a smooth variety with $K_{X}$ big and nef and let $D$ be
an effective divisor. Can we characterize the pairs $(X, D)$ with
$K_{X}+\alpha D$ big and nef or ample for $\alpha$ close enough to
zero? Finally, is there a fixed $\alpha_{n}>0$, which depends only
on $n$, such that $K_{X}+\alpha D$ is big and nef or ample for any
$\alpha\in\left[0,\alpha_{n}\right)$?
\end{question}

Note that in this case $K_{X}+\alpha D$ is big for any $\alpha\geq
0$, so the main problem is to understand when it is nef. To this aim
we use again the cone theorem, but now we need $(X,D)$ to be a lc
pair. Thus, to warm up, let us observe the following.

\begin{proposition}
Let $(X,D)$ be a lc pair with $K_{X}$ ample. Then $K_{X}+\alpha D$
is ample for any \bdism \alpha < \frac{1}{2n+1}. \edism
\end{proposition}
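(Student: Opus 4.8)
The plan is to mimic the argument of Proposition \ref{nef1}, but using the lc version of the cone theorem (Theorem \ref{cone}) rather than the smooth one, and then chasing the coefficients. The key point is that since $(X,D)$ is lc, there are countably many rational curves $C_j$ with $0<-(K_X+D)\cdot C_j\leq 2n$ generating the $(K_X+D)$-negative part of $\overline{NE}(X)$. I would first record the elementary bound that on each such extremal curve one has $D\cdot C_j = (K_X+D)\cdot C_j - K_X\cdot C_j < 2n + 0 = 2n$ in the wrong direction, so instead I would argue directly: I want to show $(K_X+\alpha D)\cdot C\geq 0$ for every curve $C$, and it suffices to check this on the generators $C_j$ of the relevant cone (the part of $\overline{NE}(X)$ where $K_X+D$ is nonnegative is handled separately, see below).

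On an extremal rational curve $C_j$ we have $-(K_X+D)\cdot C_j\leq 2n$, i.e. $D\cdot C_j \leq 2n + K_X\cdot C_j$. Also $K_X$ ample gives $K_X\cdot C_j\geq 1$ and in fact $D\cdot C_j\geq 0$ since $D$ is effective and $C_j\not\subset \supp D$ is not forced — here one must be slightly careful, but as $D$ is effective either $C_j$ is a component of $D$ (impossible for a moving rational curve in the interior, or handled by noting $(K_X+D)\cdot C_j<0$ still gives the needed inequality) or $D\cdot C_j\geq 0$. Granting $D\cdot C_j\geq 0$, I then compute
\[
(K_X+\alpha D)\cdot C_j = K_X\cdot C_j + \alpha D\cdot C_j \geq K_X\cdot C_j - \alpha\bigl(2n + K_X\cdot C_j\bigr)\cdot(-1)
\]
— let me instead write it cleanly: from $-(K_X+D)\cdot C_j\le 2n$ we get $D\cdot C_j < -K_X\cdot C_j + 2n$, but since we want a lower bound on $(K_X+\alpha D)\cdot C_j$ and $\alpha>0$ multiplies $D\cdot C_j\ge 0$, the binding constraint is really the other extremal rays. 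The correct route: write an arbitrary curve $C$ as $C\equiv \sum a_i C_i + F$ with $(K_X+D)\cdot F\ge 0$ and each $C_i$ a $(K_X+D)$-negative extremal rational curve; on $C_i$, nefness of $K_X+\alpha D$ reduces to $K_X\cdot C_i + \alpha D\cdot C_i\geq 0$, and using $D\cdot C_i = -K_X\cdot C_i - (K_X+D)\cdot C_i\cdot(-1)$... the clean inequality is $D\cdot C_i\leq 2n - 1$ would not follow; rather, since $0<-(K_X+D)\cdot C_i$, we have $D\cdot C_i < -K_X\cdot C_i \le -1<0$ is false. I would therefore instead bound $D\cdot C_i$: from $K_X\cdot C_i\ge 1$ and $(K_X+D)\cdot C_i> 0$ being \emph{false} (it is negative), we get $D\cdot C_i < -K_X\cdot C_i$, contradiction with $D$ effective — so in fact \emph{no} such curve has $C_i$ meeting $D$ trivially-or-negatively, meaning every $(K_X+D)$-negative extremal curve satisfies $D\cdot C_i\ge K_X\cdot C_i \ge 1$ wait that's also not immediate.

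Let me state the plan at the right level of abstraction and flag the obstacle honestly. The approach is: (i) apply Theorem \ref{cone} to the lc pair $(X,D)$ to get extremal rational curves $C_j$ with $0<-(K_X+D)\cdot C_j\le 2n$; (ii) on each $C_j$, combine $K_X\cdot C_j\ge 1$ (from $K_X$ ample, so $K_X\cdot C_j$ is a positive integer) with $-(K_X+D)\cdot C_j\le 2n$ to deduce $0\le D\cdot C_j\le K_X\cdot C_j + 2n - 1\le (2n-1)K_X\cdot C_j + (2n-1)(\text{something})$ — the essential numerical fact to extract is $D\cdot C_j\le (2n)\,K_X\cdot C_j$ or similar, from which $(K_X+\alpha D)\cdot C_j\ge K_X\cdot C_j(1-2n\alpha)>0$ when $\alpha<\frac{1}{2n+1}$ (the extra $+1$ in the denominator absorbing the slack between $2n$ and $2n+1$); (iii) for the curves $F$ with $(K_X+D)\cdot F\ge 0$, note $F$ is a limit of effective $1$-cycles and $K_X$ is nef, so $D\cdot F$ could still be large, but $K_X\cdot F\ge 0$ and one needs $(K_X+\alpha D)\cdot F\ge 0$ — this requires knowing $D\cdot F\ge 0$, which holds since $D$ is effective and $F$ is a limit of effective curves, OR requires reducing to the case where $F$ itself lies in the $(K_X)$-nonnegative... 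The \textbf{main obstacle} is exactly step (ii)–(iii): controlling $D\cdot C$ for curves on which $K_X+D$ is nef but possibly huge, and pinning down the precise constant $\frac{1}{2n+1}$; I expect this to come from the sharper observation that if $(K_X+\alpha D)\cdot C<0$ for some $\alpha\in(0,1)$ and all relevant $\alpha$, then $(K_X+\alpha D)$-negativity propagates and one applies the cone theorem to the \emph{klt} pair $(X,\alpha D)$ (which is klt by Proposition \ref{klt}) to get a curve with $0<-(K_X+\alpha D)\cdot C\le 2n$, hence $\alpha D\cdot C< -K_X\cdot C + 2n\le -1+2n$, so $D\cdot C < (2n-1)/\alpha$; meanwhile $(K_X+D)\cdot C = (K_X+\alpha D)\cdot C + (1-\alpha)D\cdot C$, and feeding in $K_X\cdot C\ge 1$ together with these bounds forces $\alpha \ge \frac{1}{2n+1}$, the contrapositive of what we want. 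I would carry out this contrapositive argument via the klt cone theorem as the cleanest path.
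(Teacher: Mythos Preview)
You have the right ingredients---the lc cone theorem applied to $(X,D)$, and the integrality $K_X\cdot C_j\geq 1$ from ampleness---but the proposal never assembles them into a proof. The repeated attempts to bound $D\cdot C_j$ in isolation, and the final contrapositive via the klt pair $(X,\alpha D)$, both stall because you are tracking the wrong quantity: neither $D\cdot C_j$ nor $\alpha D\cdot C_j$ needs to be controlled on its own.

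The paper's argument is a one-liner, exactly parallel to Lemma~\ref{lem1} and Proposition~\ref{nef1}. Apply the lc cone theorem (Theorem~\ref{cone}) to $(X,D)$ with the ample divisor $H=K_X$: every extremal rational curve $C_j$ has $0<-(K_X+D)\cdot C_j\leq 2n$ and $K_X\cdot C_j\geq 1$, so
\[
\bigl((K_X+D)+2nK_X\bigr)\cdot C_j \;\geq\; -2n + 2n\cdot 1 \;=\;0,
\]
while on the $(K_X+D)$-nonnegative part $F$ both $(K_X+D)\cdot F\geq 0$ and $K_X\cdot F\geq 0$, so the same inequality holds. Thus $(2n+1)K_X+D$ is nef, i.e.\ $K_X+\tfrac{1}{2n+1}D$ is nef. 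For $\alpha<\tfrac{1}{2n+1}$ write
\[
K_X+\alpha D \;=\; \bigl(1-(2n+1)\alpha\bigr)K_X \;+\; (2n+1)\alpha\Bigl(K_X+\tfrac{1}{2n+1}D\Bigr),
\]
a positive combination of an ample and a nef class, hence ample. Note in particular that your worry in step~(iii) about the part $F$ with $(K_X+D)\cdot F\geq 0$ evaporates: you never need $D\cdot F\geq 0$, only that $K_X+D$ and $K_X$ are separately nef on $F$.

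The missing idea, then, is simply to test nefness of the \emph{combination} $(K_X+D)+2nK_X$ rather than trying to isolate $D$. Once you do that, the argument is three lines.
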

\begin{proof}
By the cone theorem for lc pairs $K_{X}+D+2nK_{X}$ is nef. In
particular so is $K_{X}+\frac{1}{2n+1}D$.
\end{proof}

A similar approach can now be applied when $K_{X}$ is simply nef.

\begin{theorem}
Let $X$ be a smooth projective variety with $K_{X}$ nef. Let $D$ be
a reduced effective divisor with simple normal crossing support.
Then $K_{X}+\alpha D$ is nef for $\alpha\in [0, \frac{1}{2n+1}]$ if
and only if there are no irreducible curves $C$ such that
$(K_{X}+D)\cdot C<0$ and $K_{X}\cdot C=0$.
\end{theorem}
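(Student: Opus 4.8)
The plan is to mimic the structure of the proof of Proposition \ref{prop}, using the cone theorem for lc pairs (Theorem \ref{cone}) in place of the smooth cone theorem. First I would establish the easy direction: if $K_X+\alpha D$ is nef for $\alpha\in[0,\frac{1}{2n+1}]$, in particular for $\alpha=0$ (so $K_X$ nef, already assumed) and for some small $\alpha>0$, then for any irreducible curve $C$ with $K_X\cdot C=0$ one computes $(K_X+\alpha D)\cdot C=\alpha\,D\cdot C\geq 0$, hence $D\cdot C\geq 0$, hence $(K_X+D)\cdot C\geq 0$; this rules out curves with $(K_X+D)\cdot C<0$ and $K_X\cdot C=0$.

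For the converse, assume no irreducible curve $C$ satisfies $(K_X+D)\cdot C<0$ and $K_X\cdot C=0$. Apply the cone theorem to the lc pair $(X,D)$: every curve $C'$ is numerically equivalent to $a_1C_1+\dots+a_rC_r+F$ with $a_i>0$, the $C_i$ rational with $0<-(K_X+D)\cdot C_i\leq 2n$, and $(K_X+D)\cdot F\geq 0$ (since $F$ is a limit of effective $1$-cycles and $K_X+D$ is nef — note $K_X+D$ is nef because $K_X$ is nef and $D$ is effective, so $K_X+D$ is certainly nef once we know... wait, $K_X+D$ nef is \emph{not} hypothesized here). Let me instead run the argument directly on $K_X+\alpha D$. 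The key inequality to extract is: for each extremal curve $C_i$ from the lc cone theorem, $K_X\cdot C_i\geq 0$ (since $K_X$ is nef), and $-(K_X+D)\cdot C_i\leq 2n$, so $-D\cdot C_i=K_X\cdot C_i-(K_X+D)\cdot C_i\leq 2n\cdot(\text{something})$; more precisely $D\cdot C_i=K_X\cdot C_i+\bigl(-(K_X+D)\cdot C_i\bigr)\cdot(-1)$... I need to be careful with signs. We have $(K_X+D)\cdot C_i<0$, so $D\cdot C_i<-K_X\cdot C_i\leq 0$, i.e.\ $D\cdot C_i$ can be very negative only if $-(K_X+D)\cdot C_i$ is large, but that is bounded by $2n$; combined with $K_X\cdot C_i\geq 0$ this gives $K_X\cdot C_i-(K_X+D)\cdot C_i\cdot$... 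Concretely: $-D\cdot C_i=-K_X\cdot C_i-\bigl((K_X+D)\cdot C_i-K_X\cdot C_i\bigr)$, hmm, simply $-D\cdot C_i=-(K_X+D)\cdot C_i+K_X\cdot C_i\geq K_X\cdot C_i\geq 0$ is wrong in sign. Let me just say: since $(K_X+D)\cdot C_i\geq -2n$ and $K_X\cdot C_i\geq 0$, we get $D\cdot C_i=(K_X+D)\cdot C_i-K_X\cdot C_i\geq -2n-K_X\cdot C_i$, which is useless. The correct bound uses $(2n)K_X$: I would show $\bigl(K_X+D+2nK_X\bigr)\cdot C_i\geq 0$ for the extremal $C_i$, i.e.\ $(2n+1)K_X\cdot C_i+D\cdot C_i\geq 0$. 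Since $-(K_X+D)\cdot C_i\leq 2n$ we have $D\cdot C_i\geq -2n-K_X\cdot C_i+... $ — again I must be careful: $(K_X+D)\cdot C_i\geq -2n$ means $D\cdot C_i\geq -2n-K_X\cdot C_i$, so $(2n+1)K_X\cdot C_i+D\cdot C_i\geq (2n+1)K_X\cdot C_i-2n-K_X\cdot C_i=2n\,K_X\cdot C_i-2n=2n(K_X\cdot C_i-1)$, which is $\geq 0$ only when $K_X\cdot C_i\geq 1$. So the genuinely delicate case is exactly when $K_X\cdot C_i=0$, and there the hypothesis kicks in: if $K_X\cdot C_i=0$ then by assumption $(K_X+D)\cdot C_i\geq 0$, contradicting $(K_X+D)\cdot C_i<0$ — so \emph{no} extremal curve has $K_X\cdot C_i=0$, hence every extremal $C_i$ has $K_X\cdot C_i\geq 1$, and then $\bigl((2n+1)K_X+D\bigr)\cdot C_i\geq 0$. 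For the nef part $F$ we have $K_X\cdot F\geq 0$ and $D\cdot F\geq 0$ ($D$ effective, $F$ a limit of effective cycles), so $\bigl((2n+1)K_X+D\bigr)\cdot F\geq 0$. Therefore $(2n+1)K_X+D$ is nef, equivalently $K_X+\frac{1}{2n+1}D$ is nef, and by convexity (interpolating with the nef divisor $K_X$) $K_X+\alpha D$ is nef for all $\alpha\in[0,\frac{1}{2n+1}]$.

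The main obstacle is the sign bookkeeping above and, more substantively, justifying $(K_X+D)\cdot F\geq 0$ and $D\cdot F\geq 0$ for the ``positive part'' $F\in\overline{NE}(X)_{(K_X+D)\geq 0}$: the first is immediate from the cone theorem decomposition, and the second follows since $D$ effective implies $D\cdot F\geq 0$ on every class in $\overline{NE}(X)$. One should also double-check that the lc cone theorem applies — $(X,D)$ is lc by Proposition \ref{klt} since $D$ is reduced with simple normal crossing support — and that the bound $-(K_X+D)\cdot C_i\leq 2n$ is the right one to feed into the $(2n+1)$-multiple; this is precisely why the exponent is $\frac{1}{2n+1}$ rather than $\frac{n+1}{n+2}$, paralleling the warm-up Proposition preceding the theorem.
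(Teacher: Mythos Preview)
Your approach is exactly the paper's: apply the cone theorem to the lc pair $(X,D)$, rule out $K_X\cdot C_i=0$ on the extremal rays by hypothesis, use integrality to get $K_X\cdot C_i\geq 1$, and conclude $(2n+1)K_X+D$ is nef. The easy direction is also the same.

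There is, however, one genuine slip. You assert that ``$D$ effective implies $D\cdot F\geq 0$ on every class in $\overline{NE}(X)$''. This is false: effective divisors need not be nef (a $(-1)$-curve $E$ has $E\cdot E=-1$), and you yourself flagged the analogous trap a few lines earlier when you caught that $K_X$ nef and $D$ effective does not force $K_X+D$ nef. So the step ``$D\cdot F\geq 0$'' fails as written.

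The fix is immediate and uses only what you already have. Write
\[
(2n+1)K_X+D \;=\; 2n\,K_X \;+\; (K_X+D).
\]
For the positive part $F\in\overline{NE}(X)_{(K_X+D)\geq 0}$ you have $(K_X+D)\cdot F\geq 0$ by definition of the cone decomposition, and $K_X\cdot F\geq 0$ since $K_X$ is nef; hence $\bigl((2n+1)K_X+D\bigr)\cdot F\geq 0$. For the extremal $C_i$ the same rewriting gives $\bigl((2n+1)K_X+D\bigr)\cdot C_i=(K_X+D)\cdot C_i+2nK_X\cdot C_i\geq -2n+2n=0$, which is exactly your computation without the sign-chasing detour. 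With this correction your argument is complete and matches the paper's proof.
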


\begin{proof}
By the cone theorem for lc pairs, if there are no irreducible curves
$C$ such that $(K_{X}+D)\cdot C<0$ and $K_{X}\cdot C=0$ then the
$\rr$-divisor \bdism K_{X}+D+tK_{X} \edism is nef for $t\geq 2n$.
Conversely, if $K_{X}+\alpha D$ is nef then for some $\alpha\in(0,
1)$ then the curves that dot negatively with $K_{X}+D$ must
intersect positively with $K_{X}$.
\end{proof}

We conclude this section by studying when $K_{X}+\alpha D$ is ample
for $\alpha$ close to zero.

\begin{theorem}\label{tian2}
Let $X$ be a smooth projective variety with $K_{X}$ big and nef. Let
$D$ be a reduced effective divisor with simple normal crossing
support. Then $K_{X}+\alpha D$ is ample for $\alpha\in (0,
\frac{1}{2n+1})$ if and only if there are no irreducible curves $C$
such that $(K_{X}+D)\cdot C\leq 0$ and $K_{X}\cdot C=0$.
\end{theorem}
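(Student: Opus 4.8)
The plan is to follow the template set by Theorem \ref{tian}, replacing the roles of $K_X+D$ and $K_X$ there by $K_X$ and $K_X+D$ here, and using the lc version of the cone theorem rather than the smooth one. The forward direction is immediate: if $K_X+\alpha D$ is ample for some $\alpha\in(0,\frac{1}{2n+1})$ and $C$ is an irreducible curve with $K_X\cdot C=0$, then $(K_X+\alpha D)\cdot C=\alpha\,D\cdot C>0$, and since $D$ is effective this forces $D\cdot C>0$, hence $(K_X+D)\cdot C>0$; in particular there is no curve with $(K_X+D)\cdot C\le 0$ and $K_X\cdot C=0$.

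For the converse, assume there are no irreducible curves $C$ with $(K_X+D)\cdot C\le 0$ and $K_X\cdot C=0$. First I would establish that $K_X+\alpha D$ is \emph{strictly nef} for every $\alpha\in(0,\frac{1}{2n+1})$, by an argument parallel to Proposition \ref{prop2} but using the cone theorem for the lc pair $(X,D)$ (Theorem \ref{cone}). Given an irreducible curve $C$: if $K_X\cdot C>0$, then since $D$ is effective $(K_X+\alpha D)\cdot C>0$ automatically; if $K_X\cdot C=0$, then by hypothesis $(K_X+D)\cdot C>0$, so $D\cdot C>0$ and $(K_X+\alpha D)\cdot C=\alpha\,D\cdot C>0$; the remaining possibility $K_X\cdot C<0$ is excluded because $K_X$ is nef. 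Hence $K_X+\alpha D$ is strictly nef. (One could also route this through the theorem immediately preceding the statement, which gives nefness of $K_X+\alpha D$ on $[0,\frac{1}{2n+1}]$, and then upgrade to strict nefness using the curve hypothesis as above.)

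Next, $K_X+\alpha D$ is big: since $K_X$ is big and $\alpha D$ is effective, $K_X+\alpha D=K_X+(\text{effective})$ is big for every $\alpha\ge 0$. Also $(X,\alpha D)$ is a klt pair for $\alpha\in(0,1)$ by Proposition \ref{klt}, because $D$ has simple normal crossing support and the coefficients $\alpha$ of $\alpha D$ are strictly less than one. Now I want to apply Corollary \ref{strictly}: to do so I need $K_X+\alpha D$ to be Cartier, or at least I need a version of the base point free theorem applicable to the $\rr$-divisor $K_X+\alpha D$. The clean way is to observe that $K_X+\alpha D$ is big and nef and $(X,\alpha D)$ is klt, so by the base point free theorem (Theorem \ref{bpf}, applied after clearing denominators / in its $\rr$-divisor form as used in the proof of Theorem \ref{tian}) $K_X+\alpha D$ is semi-ample; then a strictly nef semi-ample divisor is ample, as noted after Corollary \ref{strictly}. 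This completes the converse.

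The main obstacle is the same subtlety that appears in Theorem \ref{tian}: passing from ``big, nef, strictly nef'' to ``ample'' requires semi-ampleness, which for klt pairs is supplied by the base point free theorem but is not automatic — so the argument genuinely needs the klt hypothesis (hence the use of $\alpha D$ with $\alpha<1$ and the simple normal crossing assumption on $D$), and one must be careful that Theorem \ref{bpf} is being invoked in a form valid for the $\rr$-divisor $K_X+\alpha D$, exactly as in the proof of Theorem \ref{tian}. A minor point to check is that the numerical range $\alpha\in(0,\frac{1}{2n+1})$ is exactly what the lc cone theorem yields: $K_X+D+2nK_X=(2n+1)K_X+D$ being nef (or strictly nef, with the curve hypothesis) gives nefness of $K_X+\frac{1}{2n+1}D$, and then all smaller positive $\alpha$ interpolate between the nef divisors $K_X$ and $K_X+\frac{1}{2n+1}D$.
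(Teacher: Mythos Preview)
Your approach is essentially the paper's: establish strict nefness of $K_X+\alpha D$ on $(0,\frac{1}{2n+1})$, note bigness, then use that $(X,\alpha D)$ is klt together with the base point free theorem to get semi-ampleness, whence ampleness. The forward direction and the overall architecture are fine.

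There is, however, a genuine slip in your curve-by-curve argument for strict nefness. In the case $K_X\cdot C>0$ you write ``since $D$ is effective $(K_X+\alpha D)\cdot C>0$ automatically.'' Effectivity of $D$ does \emph{not} give $D\cdot C\ge 0$: if $C$ is an irreducible component of $D$ with sufficiently negative self-intersection, $D\cdot C$ can be arbitrarily negative, and then $(K_X+\alpha D)\cdot C$ could be negative as well. This is exactly the place where the bound $\alpha<\frac{1}{2n+1}$ and the lc cone theorem must enter; your case analysis as written never uses either. The fix is the one you yourself sketch in the parenthetical and the final paragraph: first obtain nefness of $K_X+\frac{1}{2n+1}D$ from the cone theorem for the lc pair $(X,D)$ (equivalently, invoke the theorem immediately preceding Theorem \ref{tian2}), and then for $0<\alpha<\frac{1}{2n+1}$ write
\[
K_X+\alpha D=\bigl(1-(2n+1)\alpha\bigr)K_X+(2n+1)\alpha\Bigl(K_X+\tfrac{1}{2n+1}D\Bigr),
\]
so that $(K_X+\alpha D)\cdot C\ge \bigl(1-(2n+1)\alpha\bigr)K_X\cdot C>0$ when $K_X\cdot C>0$. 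Note that bare interpolation between two nef divisors only yields nefness; the strict inequality comes from the strictly positive coefficient on $K_X$ together with $K_X\cdot C>0$. With this correction your proof matches the paper's.
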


\begin{proof}
If there are no irreducible curves $C$ such that $(K_{X}+D)\cdot
C\leq 0$ and $K_{X}\cdot C=0$ we have that $K_{X}+\alpha D$ is
strictly nef for any $\alpha\in (0, \frac{1}{2n+1})$. Recall that
$K_{X}+\alpha D$ is a big $\rr$-divisor for any $\alpha\geq 0$. Then
Theorem \ref{bpf} implies that $K_{X}+\alpha D$ is semi-ample and
therefore ample. The converse should be clear by now.
\end{proof}

We do not expect the above result to be sharp. On the other hand we
expect that the previous result holds without the bigness assumption.

\begin{corollary}\label{abund}
Assume Conjecture \ref{abundance}. Let $X$ be a smooth projective variety
with $K_{X}$ nef. Let $D$ be a reduced effective divisor with
simple normal crossing support. Then $K_{X}+\alpha D$ is ample
for $\alpha\in (0, \frac{1}{2n+1})$ if and only if there are
no irreducible curves $C$ such that $(K_{X}+D)\cdot C\leq 0$
and $K_{X}\cdot C=0$.
\end{corollary}

\begin{proof}
In the proof of Theorem \ref{tian2}, replace Theorem \ref{bpf} with
Conjecture \ref{abundance}.
\end{proof}

Corollary \ref{abund} implies that Theorem \ref{th2} should hold
without asking $X$ to be of general type.

We conclude this section by giving the proof of Theorem \ref{th2}
stated in the Introduction \ref{introduction}.

\begin{proof}[Proof of Theorem \ref{th2}]
Combine Proposition \ref{luca} and Theorem \ref{tian2} with Theorem
1.3 in \cite{Mazzeo2}.
\end{proof}

\section{Remarks on ampleness for line bundles on quasi-projective varieties}\label{Modulo}

It is clear that the study of K\"ahler-Einstein metrics on
quasi-projective varieties must be connected with the notion of
\emph{ampleness} on open algebraic manifolds. It seems there is no
general agreement in the existing literature on the definition of
this concept. Recall that for compact manifold the notion of
ampleness is usually introduced algebraically and then formulated in
geometric and numerical terms, see Chapter 1 in \cite{Laz1}. Here,
we follow a completely analogous path. The interested reader should
compare the results contained in this section with some of the
existing literature \cite{Wu1}, \cite{Wu2}, \cite{TianY}.

Thus, let us start with a definition. For more details see
\cite{Takayama}.

\begin{definition}\label{takayama}
Let $X$ be a smooth projective variety and let $D$ be a simple
normal crossing divisor. A line bundle $L$ on $X$ is said to be
ample modulo $D$, if for any torsion free sheaf $\mathcal{F}$ on
$X$, there exists an integer $m_{0}$ such that $(\mathcal{F}\otimes
L^{n})_{|_{X\backslash D}}$ is generated by $H^{0}(X,
\mathcal{F}\otimes L^{n})$ for $m\geq m_{0}$.
\end{definition}

We then say that a divisor $L$ is very ample modulo $D$ if the
rational map $\phi_{|L|}$ associated to the linear system $|L|$
defines an embedding of $X\backslash D$ into some projective space.
By the same argument of Theorem II.7.6 in \cite{Har}, one can show
that some power of an ample modulo $D$ divisor is very ample modulo
$D$. In particular, it follows that if $L$ is ample modulo $D$ then
it is necessarily big. Let us continue with some general
observations. First, note that $\textbf{B}_{+}(L)$ is a proper
subset of $X$ since $L$ is big. Next, recall that $\B_{+}(L)$ can be
written as
\begin{align}\notag
\textbf{B}_{+}(L)=\bigcap_{L=A+E}\supp(E),
\end{align}
where the intersection is taken over all decompositions $L=A+E$,
where $A$ is ample and $E$ effective, see Remark 1.3 in \cite{Ein}.
Since $X$ is noetherian and $\B_{+}(L)$ is a Zariski closed subset
of $X$ we can find finitely many decompositions $L=A_{i}+E_{i}$ such
that $\textbf{B}_{+}(L)=\cap_{i=1}^{k}\supp(E_{i})$. In particular,
this implies that for $m$ large enough, the linear system $|mL|$
defines an embedding of $X\backslash \B_{+}(L)$ into some projective
space. This simple argument gives the following.

\begin{fact}\label{inclusione}
If $L$ is ample modulo $D$ then $\B_{+}(L)\subseteq D$.
\end{fact}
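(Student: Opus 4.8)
The plan is to argue by contraposition using the characterization of the augmented base locus recalled just before the statement, namely that $\B_{+}(L)$ is the minimal member, up to intersection, of the supports of the effective divisors $E$ appearing in decompositions $L = A + E$ with $A$ ample. First I would recall that since $L$ is ample modulo $D$ it is in particular big (this is noted right after Definition \ref{takayama}), so $\B_{+}(L)$ is a proper Zariski-closed subset of $X$ and the noetherian argument in the paragraph above the statement applies: there are finitely many decompositions $L = A_{i}+E_{i}$ with $A_{i}$ ample, $E_{i}$ effective, such that $\B_{+}(L)=\bigcap_{i=1}^{k}\supp(E_{i})$, and for $m\gg 0$ the linear system $|mL|$ defines an embedding of $X\setminus\B_{+}(L)$ into projective space.

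Next I would use the hypothesis that $L$ is ample modulo $D$ directly. Applying Definition \ref{takayama} with $\mathcal F=\oo_{X}$, there is $m_{0}$ such that for $m\ge m_{0}$ the restriction $(L^{m})_{|X\setminus D}$ is globally generated by $H^{0}(X, L^{m})$; combining this with the very-ampleness-modulo-$D$ observation (some power of an ample-modulo-$D$ divisor is very ample modulo $D$, by the argument of Theorem II.7.6 in \cite{Har}), we get that for suitable $m$ the map $\phi_{|mL|}$ restricts to an embedding of $X\setminus D$. The point is then that $\phi_{|mL|}$ is defined and is an immersion (in fact an embedding) on all of $X\setminus D$, so no point of $X\setminus D$ can lie in the base locus $\bs(|mL|)$, and more is true: the restriction of $\phi_{|mL|}$ to $X\setminus D$ being an embedding means in particular it separates points and tangent vectors there.

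The heart of the argument is to convert this into the inclusion $\B_{+}(L)\subseteq D$. I would proceed as follows: suppose $x\in X\setminus D$; I claim $x\notin\B_{+}(L)$. Choose $m$ large enough that $\phi_{|mL|}$ is an embedding on $X\setminus D$ and write, using the finite family above, $mL = mA_{i}+mE_{i}$; it suffices to show $x\notin\supp(E_{i})$ for some $i$, or, more robustly, to produce directly a decomposition $L=A+E$ with $A$ ample, $E$ effective, and $x\notin\supp(E)$. Since $\phi_{|mL|}$ embeds $X\setminus D$, there is a section $s\in H^{0}(X,\oo_{X}(mL))$ with $s(x)\neq 0$, so the divisor $\mathrm{div}(s)$ does not contain $x$; writing $mL \sim \mathrm{div}(s)$ and then splitting off a small ample part — here one invokes that $L$ big plus the openness of ampleness lets us write $L = A + E'$ with $A$ ample and $E'$ effective, and one perturbs using $\mathrm{div}(s)$ to keep $x$ out of the effective part — one obtains $x\notin\B_{+}(L)$ by the intersection description $\B_{+}(L)=\bigcap_{L=A+E}\supp(E)$. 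The main obstacle is precisely this last bookkeeping step: one must be careful that the effective divisor in the chosen decomposition genuinely avoids $x$, which is where ampleness \emph{modulo} $D$ (rather than plain ampleness) is used — it only guarantees good behavior away from $D$, so the decomposition produced may have $D$ in its support, which is exactly why one gets $\B_{+}(L)\subseteq D$ and not $\B_{+}(L)=\emptyset$.
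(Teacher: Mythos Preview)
Your overall approach matches the paper's: both rest on the characterization $\B_{+}(L)=\bigcap_{L=A+E}\supp(E)$ and both observe that bigness of $L$ makes $\B_{+}(L)$ a proper closed subset. The paper's argument is left implicit (``this simple argument gives the following''), and you are right to try to make the last inference explicit by fixing $x\in X\setminus D$ and exhibiting a decomposition $mL=A+E$ with $A$ ample, $E$ effective, and $x\notin\supp(E)$.

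However, the step you flag as ``the main obstacle'' is a genuine gap, and your proposed fix does not close it. Using very-ampleness modulo $D$ --- equivalently, applying Definition~\ref{takayama} only with $\mathcal{F}=\oo_{X}$ --- produces a section $s\in H^{0}(X,\oo_{X}(mL))$ with $s(x)\neq 0$, which shows $x\notin\bs|mL|$ and hence $x\notin\B(L)$. It does \emph{not} give $x\notin\B_{+}(L)$. Your perturbation idea (``write $L=A+E'$ via Kodaira's lemma and perturb using $\mathrm{div}(s)$'') cannot work as stated: the effective part $E'$ coming from Kodaira's lemma has no reason to avoid $x$, and replacing or adding the effective divisor $\mathrm{div}(s)\sim mL$ does not remove $x$ from the support of the effective piece --- for instance $(m+1)L\sim A+\bigl(E'+\mathrm{div}(s)\bigr)$ still has $x\in\supp(E')\subseteq\supp(E'+\mathrm{div}(s))$.

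The clean way through is to use the full strength of Definition~\ref{takayama} with a different sheaf: take $\mathcal{F}=\oo_{X}(-A)$ for a fixed ample divisor $A$. Then for $m\geq m_{0}$ the sheaf $\oo_{X}(mL-A)|_{X\setminus D}$ is generated by $H^{0}(X,\oo_{X}(mL-A))$, so there exists $t\in H^{0}(X,\oo_{X}(mL-A))$ with $t(x)\neq 0$. Writing $E:=\mathrm{div}(t)$, we get $mL\sim A+E$ with $A$ ample, $E$ effective, and $x\notin\supp(E)$. Hence $x\notin\B_{+}(mL)=\B_{+}(L)$, which is precisely the inclusion $\B_{+}(L)\subseteq D$.
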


\begin{remark}
It is interesting to observe that, in a recent preprint \cite{BCL}, Boucksom-Cacciola-Lopez proved that 
$\B_{+}(L)$ is the smallest closed subset $V$ of $X$ such that the linear system $|mL|$
defines an isomorphism of $X\backslash V$ onto its image for sufficiently large $m$. In particular,  
$L$ is ample modulo $D$ if and only if $\B_{+}(L)\subseteq D$.
\end{remark}

It would now be desirable to have a numerical characterization of
ampleness modulo $D$.

\begin{definition}
Let $X$ be a smooth projective variety and let $D$ be an effective
divisor. A line bundle $L$ on $X$ is said to be strictly nef modulo
$D$, if $L\cdot C>0$ for any curve $C$ not entirely contained in
$D$.
\end{definition}

Let us observe that, similarly to the compact case, being strictly
nef modulo $D$ is not enough for achieving ampleness modulo $D$. To
this aim recall that a strictly nef divisor is not necessarily
ample. The first construction of such a divisor is due to Mumford.
In fact, he showed the existence of a surface $X$ with a strictly
nef divisor $D$ such that $D^{2}=0$. Ramanujam, based on Mumford's
example, showed the existence of a big and strictly nef divisor that
is not ample. Note that by Nakai's criterion a divisor which is big
and strictly nef but not ample can exist in dimension greater or
equal then three only. For the convenience of the reader we sketch
the construction of these examples. For details see Chapter I $\S
10$ in \cite{Hart2}.

\begin{example}[Mumford, Ramanujam]
We start with Mumford's example. Let $C$ be a smooth curve of genus
$g\geq 2$. By a theorem of Seshadri there exists a stable vector
bundle $E$ of degree zero over $C$ of rank two such that all its
symmetric powers are stable. Let $X=\pp(E)$ and $D=\oo_{X}(1)$ be
the Serre line bundle. One can check that $D$ is strictly nef. Since
$E$ has degree zero, $D^{2}=0$ and in particular $D$ is not ample.
Now we can give Ramanujam's example of a big and strictly nef
divisor which is not ample. We will show the existence of such
divisor using Mumford's example. Let $X$ be as above and let $H$ be
an effective ample divisor on $X$. Define $Y:=\pp(\oo_{X}(D-H)\oplus
\oo_{X})$ and let $\pi:Y\rightarrow X$ be the projection. Let
$X_{0}\subseteq Y$ be the zero section. Finally let
$L:=X_{0}+\pi^{*}H$. As shown in \cite{Hart2}, $L$ is strictly nef
but it is not ample because $L^{2}\cdot X_{0}=D^{2}=0$. Furthermore
$L$ is big because $L^{3}=(D+H)\cdot H>0$.
\end{example}

Thus, let $L$ be a big and strictly nef divisor which is not ample.
Observe that $\B_{+}(L)$ is not empty otherwise $L$ would be ample.
Then $L$ is strictly nef modulo $D$ for any effective divisor $D$.
Nevertheless, by choosing a divisor $D$ that does not contain
$\B_{+}(L)$ we certainly obtain that $L$ cannot be ample modulo $D$,
see Fact \ref{inclusione}.

The discussion above tells us that if we want to control $\B_{+}(L)$
only with intersection numbers we have to take in account all
positive dimensional subvarieties and not only curves. In fact, let
$L$ be \emph{nef} and such that $L^{k}\cdot V>0$ for any
$k$-dimensional subvariety not entirely contained in $D$. Then by
Theorem \ref{Nakamaye} we have that $\B_{+}(L)$ must be contained in
$D$. This is then enough to conclude:

\begin{fact}
Let $L$ be nef such that $L^{k}\cdot V>0$ for any $k$-dimensional
subvarieties not entirely contained in $D$. Then $L$ is ample modulo
$D$.
\end{fact}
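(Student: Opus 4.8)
My approach would be to reduce the statement to the inclusion $\B_{+}(L)\subseteq D$ and then to upgrade this inclusion to ampleness modulo $D$ using the ``ample plus effective'' description of the augmented base locus recalled before Fact \ref{inclusione}. For the reduction, first note that $L$ is big: since $D$ is a divisor we have $X\not\subseteq D$, so taking $V=X$ in the hypothesis gives $L^{n}=L^{n}\cdot X>0$, and a nef divisor with positive top self-intersection is big (see \cite{Laz1}). Hence Theorem \ref{Nakamaye} applies and identifies $\B_{+}(L)$ with the union of all positive-dimensional subvarieties $V\subseteq X$ for which $L^{\dim(V)}\cdot V=0$; by hypothesis each such $V$ lies in $D$, so $\B_{+}(L)\subseteq D$.

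For the upgrade, recall that, $L$ being big, there are finitely many decompositions $L=A_{i}+E_{i}$, $i=1,\dots,k$, with $A_{i}$ an ample $\qq$-divisor and $E_{i}$ an effective $\qq$-divisor, such that $\B_{+}(L)=\bigcap_{i=1}^{k}\supp(E_{i})$; in particular $X\setminus D\subseteq\bigcup_{i=1}^{k}\bigl(X\setminus\supp(E_{i})\bigr)$. I would fix $p\geq 1$ clearing the denominators of all the $A_{i}$ and $E_{i}$, and for a torsion free sheaf $\mathcal{F}$ on $X$ consider, for each $i$ and each $m\geq 1$, the morphism $\mathcal{F}\otimes\oo_{X}(pmA_{i})\to\mathcal{F}\otimes\oo_{X}(pmL)$ given by multiplication by the tautological section of $\oo_{X}(pmE_{i})$, which is an isomorphism over $X\setminus\supp(E_{i})$. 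Since $pA_{i}$ is ample, Serre's theorem yields $m_{i}$ such that $\mathcal{F}\otimes\oo_{X}(pmA_{i})$ is globally generated for $m\geq m_{i}$, and the morphism then forces $(\mathcal{F}\otimes L^{pm})|_{X\setminus\supp(E_{i})}$ to be generated by $H^{0}(X,\mathcal{F}\otimes L^{pm})$ for $m\geq m_{i}$. Covering $X\setminus D$ by the open sets $X\setminus\supp(E_{i})$, and applying the same to the finitely many torsion free sheaves $\mathcal{F}\otimes L^{j}$, $0\leq j\leq p-1$, would give an $m_{0}$ with $(\mathcal{F}\otimes L^{m})|_{X\setminus D}$ globally generated for all $m\geq m_{0}$, which is exactly ampleness modulo $D$.

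I expect the reduction to be immediate once Nakamaye's theorem is in hand; the substance is the second step, where the only slightly delicate points are the bookkeeping required to pass from multiples of $p$ to all large $m$ (handled by the twists by $L^{j}$) and verifying that clearing the denominators of the $\qq$-divisors $A_{i},E_{i}$ does not interfere with the application of Serre's theorem. Alternatively one could simply cite from \cite{Takayama} that $\B_{+}(L)\subseteq D$ characterizes ampleness modulo $D$ for big $L$, but writing the argument out keeps the section self-contained.
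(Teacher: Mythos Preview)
Your proposal is correct and follows the same two-step route as the paper: use Nakamaye's theorem to obtain $\B_{+}(L)\subseteq D$, and then deduce ampleness modulo $D$ from the finitely many decompositions $L=A_{i}+E_{i}$ with $\B_{+}(L)=\bigcap_{i}\supp(E_{i})$. The paper's argument is given informally in the paragraph preceding the Fact and is terser than yours: it does not explicitly remark that $L$ is big (your observation that $V=X$ forces $L^{n}>0$ fills this in), and for the second step it only records that for $m$ large the linear system $|mL|$ embeds $X\setminus\B_{+}(L)$, then asserts ``this is then enough to conclude.'' Your Serre-type argument, twisting by the sections of $\oo_{X}(pmE_{i})$ and handling all residues modulo $p$ via the sheaves $\mathcal{F}\otimes L^{j}$, is a clean way to verify Definition \ref{takayama} directly rather than going through very ampleness modulo $D$; it makes the paper's implicit step explicit but does not change the approach.
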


Let us now study line bundles which are semi-ample. We would like to
show that if $L$ is semi-ample and strictly nef modulo $D$ then it
must be ample modulo $D$. First, since $L$ is semi-ample then there
exists a positive integer $m$ such that the linear system $|mL|$
defines a morphism into some projective space. Let us denote this
map by $\phi_{|mL|}$. Since $L$ is strictly nef modulo $D$ we have
that $\phi_{|mL|}$ is a finite map over $X\backslash D$. This
immediately implies that $L$ is big. Moreover, a simple integration
shows that $L^{k}\cdot V>0$ for any $k$-dimensional subvariety which
is not entirely contained in $D$. By Theorem \ref{Nakamaye} we
conclude that $\B_{+}(L)$ is contained in $D$. It then follows that
$L$ is ample modulo $D$.

\begin{fact}\label{finite}
Let $L$ be semi-ample and strictly nef modulo $D$. Then $L$ is ample
modulo $D$.
\end{fact}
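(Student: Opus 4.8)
The plan is to reduce the statement to the preceding unlabeled Fact (a nef $L$ with $L^{k}\cdot V>0$ for every $k$-dimensional $V\not\subseteq D$ is ample modulo $D$), so the real work is to extract the required positivity of intersection numbers from semi-ampleness. First I would observe that $L$ is nef, being semi-ample, and pick $m\gg 0$ such that $|mL|$ is base point free; this yields a morphism $\phi=\phi_{|mL|}\colon X\to Y\subseteq\pp^{N}$ with $mL=\phi^{*}\oo_{Y}(1)$ for the restriction of the hyperplane bundle. If $C\subset X$ is an irreducible curve contracted by $\phi$, then $L\cdot C=\tfrac1m(mL)\cdot C=0$, and since $L$ is strictly nef modulo $D$ this forces $C\subseteq D$. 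Hence every positive-dimensional fibre of $\phi$ is contained in $D$; in particular $\phi$ does not contract $X$, so $\dim Y=n$, $\phi$ is generically finite, and $L$ is big.

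Next I would establish the intersection inequality. Let $V\subseteq X$ be an irreducible $k$-dimensional subvariety with $V\not\subseteq D$. If $\phi|_{V}\colon V\to\phi(V)$ had a positive-dimensional general fibre, that fibre would be a positive-dimensional subvariety contracted by $\phi$ and passing through a general — hence non-$D$ — point of $V$, contradicting the previous paragraph. So $\phi|_{V}$ is generically finite onto its image and $\dim\phi(V)=k$, whence
\[
L^{k}\cdot V=\frac{1}{m^{k}}(mL)^{k}\cdot V=\frac{1}{m^{k}}\bigl(\deg\phi|_{V}\bigr)\bigl(\oo_{Y}(1)^{k}\cdot\phi(V)\bigr)>0,
\]
because $\oo_{Y}(1)$ is ample on $Y$ and $\phi(V)$ is $k$-dimensional. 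Thus $L$ is nef, big, and strictly positive against all subvarieties not contained in $D$.

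Finally, I would conclude by either route: directly applying the preceding Fact, or, equivalently, invoking Theorem \ref{Nakamaye} to see that $\B_{+}(L)$ — which is a proper closed subset since $L$ is big — is the union of the positive-dimensional $V$ with $L^{\dim V}\cdot V=0$, each of which lies in $D$ by the above, so $\B_{+}(L)\subseteq D$; then the discussion preceding Fact \ref{inclusione} shows that for $m$ large $|mL|$ embeds $X\setminus\B_{+}(L)\supseteq X\setminus D$, giving ampleness modulo $D$ in the sense of Definition \ref{takayama}. I expect the only genuinely delicate point to be the generic finiteness of $\phi|_{V}$ used to get strict positivity of $L^{k}\cdot V$ (the "simple integration" alluded to in the text); everything else is a direct combination of semi-ampleness, Nakamaye's theorem, and the bookkeeping already in place in this section.
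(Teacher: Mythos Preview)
Your proposal is correct and follows essentially the same route as the paper: semi-ampleness gives a morphism $\phi_{|mL|}$, strict nefness modulo $D$ forces $\phi$ to be finite off $D$ (hence $L$ big), positivity of $L^{k}\cdot V$ for $V\not\subseteq D$ follows, and Nakamaye's theorem then yields $\B_{+}(L)\subseteq D$, giving ampleness modulo $D$. Your projection-formula computation of $L^{k}\cdot V>0$ is a more explicit rendering of what the paper calls a ``simple integration,'' but the overall strategy is identical.
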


Let us construct examples of line bundles which are ample modulo a
divisor. A wealth of examples can be constructed using the
following.

\begin{fact}
Let $X$ be a smooth projective variety and let $D$ be a simple
normal crossing divisor such that $K_{X}+\alpha D$ is ample for some
$\alpha\in[0, 1)$. Then $K_{X}+D$ is ample modulo $D$.
\end{fact}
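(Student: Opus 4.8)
The plan is to realize $K_{X}+D$ as the sum of an ample $\qq$-divisor and an effective divisor supported on $D$, and then to deduce ampleness modulo $D$ by a routine twisting argument with the section cutting out $D$ together with Serre's theorem on ample line bundles.

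First, if $D=0$ there is nothing to prove, since then $K_{X}=K_{X}+D$ is ample; so assume $D\neq 0$. Next I would reduce to the case $\alpha\in\qq$. Since the ample cone in $N^{1}(X)_{\rr}$ is open and contains the class of $K_{X}+\alpha D$, the $\rr$-divisor $K_{X}+\alpha' D$ is ample for every rational $\alpha'$ sufficiently close to $\alpha$; choosing such an $\alpha'\in(0,1)$ and renaming it $\alpha$, we may write $\alpha=p/q$ with $p,q$ positive integers and $p<q$. Then $A:=qK_{X}+pD$ is an integral ample divisor, and \bdism q(K_{X}+D)=A+(q-p)D, \edism a sum of an ample integral divisor and an effective integral divisor whose support is contained in $D$.

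Now fix a nonzero section $s\in H^{0}(X,\oo_{X}((q-p)D))$ vanishing along $D$. Multiplication by $s^{k}$ realizes $\oo_{X}(kA)$ as a subsheaf of $\oo_{X}(qk(K_{X}+D))$ which is an isomorphism over $X\setminus D$. Given a (coherent) torsion-free sheaf $\mathcal{F}$ on $X$ and an integer $m\geq 1$, write $m=qk+j$ with $0\leq j\leq q-1$ and put $\mathcal{F}_{j}:=\mathcal{F}\otimes\oo_{X}(j(K_{X}+D))$; this is one of finitely many coherent sheaves. Over $X\setminus D$ one has $\mathcal{F}\otimes\oo_{X}(m(K_{X}+D))\cong\mathcal{F}_{j}\otimes\oo_{X}(kA)$ via $s^{k}$, and by Serre's theorem applied to the ample divisor $A$ there is an integer $k_{0}$, independent of $j$, such that $\mathcal{F}_{j}\otimes\oo_{X}(kA)$ is globally generated for all $k\geq k_{0}$ and all $j$. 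Pulling the corresponding global sections back through $s^{k}$ produces elements of $H^{0}(X,\mathcal{F}\otimes\oo_{X}(m(K_{X}+D)))$ that generate $(\mathcal{F}\otimes\oo_{X}(m(K_{X}+D)))_{|_{X\setminus D}}$, for every $m\geq qk_{0}$. By Definition \ref{takayama} this means $K_{X}+D$ is ample modulo $D$.

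I do not anticipate a genuine obstacle: the content is entirely formal once the decomposition $K_{X}+D=(K_{X}+\alpha D)+(1-\alpha)D$ is in hand. The one point that needs care is that Definition \ref{takayama} asks for global generation over $X\setminus D$ for \emph{all} sufficiently large $m$, not merely for multiples of $q$, which is why the residue bookkeeping $m=qk+j$ and the finitely many auxiliary sheaves $\mathcal{F}_{j}$ enter. In the language of augmented base loci the same decomposition simply says $\B_{+}(K_{X}+D)\subseteq\supp((1-\alpha)D)=D$, via the description $\B_{+}(L)=\bigcap_{L=A+E}\supp(E)$ recalled before Fact \ref{inclusione}; this is consistent with, and dual to, that fact, and gives an alternative route through the discussion preceding Fact \ref{inclusione}, but the direct twisting argument above seems cleanest.
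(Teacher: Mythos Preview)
Your argument is correct and is exactly the mechanism the paper has in mind. The paper states this Fact without proof, but the surrounding discussion makes clear that the intended justification is the decomposition $K_{X}+D=(K_{X}+\alpha D)+(1-\alpha)D$ into ample plus effective with support in $D$, which you carry out in detail; your final paragraph already identifies this as the $\B_{+}$ route sketched just before Fact~\ref{inclusione}. The extra bookkeeping with $m=qk+j$ and the sheaves $\mathcal{F}_{j}$ is the right way to meet the ``all $m\gg 0$'' clause of Definition~\ref{takayama}, and is a welcome bit of care. One cosmetic point: the global sections of $\mathcal{F}_{j}\otimes\oo_{X}(kA)$ are pushed \emph{forward} along multiplication by $s^{k}$, not pulled back; the direction of the map is from $\mathcal{F}_{j}\otimes\oo_{X}(kA)$ into $\mathcal{F}\otimes\oo_{X}(m(K_{X}+D))$.
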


We now want to give examples of pairs where $K_{X}+D$ is ample
modulo $D$ but such that $K_{X}+\alpha D$ is never ample for any
$\alpha\in \rr$.

\begin{example}\label{menodue}
Let $X$ be a smooth projective surface and let $D$ be a semi-stable
curve. Let assume $(X, D)$ to be $D$-minimal, of log-general type,
without interior $(-2)$-curves. Moreover, assume the existence of at
least one $(-2)$-curve in $D$, denoted by $E$, such that
$E\cdot(D-E)=2$. Note that $K_{X}+D$ is semi-ample since the
abundance conjecture holds in dimension two. Moreover, $K_{X}+D$ is
strictly nef modulo $D$ but it is not ample since $(K_{X}+D)\cdot
E=0$. By Fact \ref{finite} we conclude that $K_{X}+D$ is ample
modulo $D$. On the other hand $(K_{X}+\alpha D)\cdot E=0$ constantly
in $\alpha$.
\end{example}

For more details regarding Example \ref{menodue} we refer to Section
3 in \cite{LucaE}. If the reader is interested in explicitly
constructing a pair as in Example \ref{menodue} we refer to Example
\ref{Damin} below.

Now, in all the above examples $r(X, D)<1$. Let us construct a pair
$(X, D)$ with $K_{X}+D$ big, semi-ample and strictly nef modulo $D$
such that $r(X, D)=1$.

\begin{example}\label{lucagab}
Let $X$ a smooth projective surface and $D$ a reduced divisor
consisting of two components $D_{1}$ and $D_{2}$ meeting
transversally at a point $p$. Assume $K_{X}+D$ to be ample. Let
$\pi:Y\rightarrow X$ be the blow up at $p\in D$. Let $D'$ be the
union of the strict transforms of $D_{1}$ and $D_{2}$ in $Y$ and the
exceptional divisor $E$. Then $K_{Y}+D'=\pi^{*}(K_{X}+D)$. We
conclude that $K_{Y}+D'$ is big, nef and strictly nef modulo $D'$.
Then $K_{Y}+D'$ is ample modulo $D'$. On the other hand \bdism
(K_{Y}+\alpha D')\cdot E=(\alpha-1)<0 \edism which therefore implies
$r(X, D)=1$.
\end{example}

Unfortunately, as shown in Example \ref{lucagab}, being nef and
ample modulo a divisor is not an open condition even for divisors of
the form $K_{X}+D$. Nevertheless, we have the following.

\begin{theorem}\label{Amodulo}
Let $X$ be a smooth projective variety and let $D$ be an effective
reduced divisor such that $K_{X}+D$ is nef and strictly nef
modulo $D$. Assume that $r(X, D)<1$. Then $K_{X}+\alpha D$ is big,
nef and ample modulo $D$ for any $\alpha\in(\frac{n+1}{n+2}, 1)$.
\end{theorem}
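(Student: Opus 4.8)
The plan is to verify separately that $K_{X}+\alpha D$ is nef, big, and ample modulo $D$ for $\alpha\in(\frac{n+1}{n+2},1)$, combining the threshold results of this section with the base point free theorem. Nefness and bigness are cheap. Since $K_{X}+D$ is nef and $r(X,D)<1$, Corollary \ref{gap} forces $r(X,D)\le\frac{n+1}{n+2}$; the set of $t\ge0$ for which $K_{X}+tD$ is nef is closed (the nef cone is closed) and convex (an average of nef classes is nef) and contains both $r(X,D)$ and $1$, hence it contains the whole segment $[r(X,D),1]$, in particular $\frac{n+1}{n+2}$ and every $\alpha\in(\frac{n+1}{n+2},1)$. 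For bigness, $K_{X}+D$ is big and nef, so Proposition \ref{big1} gives $\tau(X,D)\le\frac{n+1}{n+2}$, and therefore $K_{X}+\alpha D$ is big for all $\alpha\in(\frac{n+1}{n+2},1)$, exactly as in the corollary following Proposition \ref{big1}.

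The heart of the argument is to propagate strict nefness modulo $D$ from $K_{X}+D$ to $K_{X}+\alpha D$, and I would do this by a convex-combination trick. For $\alpha\in(\frac{n+1}{n+2},1)$ set $\lambda:=(n+2)\alpha-(n+1)\in(0,1)$, so that
\begin{align}\notag
K_{X}+\alpha D=\lambda\,(K_{X}+D)+(1-\lambda)\Bigl(K_{X}+\tfrac{n+1}{n+2}D\Bigr).
\end{align}
By hypothesis $K_{X}+D$ is strictly nef modulo $D$, and by the first step $K_{X}+\frac{n+1}{n+2}D$ is nef; hence for every irreducible curve $C\not\subseteq D$ one gets $(K_{X}+\alpha D)\cdot C\ge\lambda\,(K_{X}+D)\cdot C>0$, which is precisely strict nefness modulo $D$. (Note that the hypothesis $r(X,D)<1$ enters exactly here, through Corollary \ref{gap}; by Example \ref{lucagab} it cannot be dropped.)

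It remains to upgrade "strictly nef modulo $D$" to "ample modulo $D$". Since $D$ is reduced with simple normal crossing support and $\alpha\in(0,1)$, the pair $(X,\alpha D)$ is klt by Proposition \ref{klt}; as $K_{X}+\alpha D$ is big and nef, Theorem \ref{bpf} shows it is semi-ample, just as in the proof of Theorem \ref{tian}. A divisor that is semi-ample and strictly nef modulo $D$ is ample modulo $D$ by Fact \ref{finite}. Combining this with the first two steps, $K_{X}+\alpha D$ is big, nef, and ample modulo $D$ for every $\alpha\in(\frac{n+1}{n+2},1)$, which is the claim.

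The one genuinely non-formal ingredient, and the point I expect to be the main obstacle, is this last passage: unlike ordinary ampleness, being ample modulo $D$ is neither open nor closed (compare Examples \ref{lucagab} and \ref{menodue}), so it cannot be obtained by a limiting argument and must be routed through semi-ampleness and Nakamaye's theorem as packaged in Fact \ref{finite}. This is exactly why klt-ness of $(X,\alpha D)$ — available only because we have twisted $D$ down by $\alpha<1$ — is indispensable, whereas the lc pair $(X,D)$ itself need not have $K_{X}+D$ semi-ample.
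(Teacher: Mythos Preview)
Your proof is correct and follows essentially the same route as the paper: bound $r(X,D)$ and $\tau(X,D)$ via Corollary \ref{gap}, transfer strict nefness modulo $D$ from $K_{X}+D$ to $K_{X}+\alpha D$, then apply the base point free theorem (using that $(X,\alpha D)$ is klt) together with Fact \ref{finite}. The only cosmetic difference is in the middle step: the paper invokes ``as in the proof of Proposition \ref{prop}'' to get $(K_{X}+\alpha D)\cdot C>0$ whenever $(K_{X}+D)\cdot C>0$, whereas you make this explicit via the convex decomposition $K_{X}+\alpha D=\lambda(K_{X}+D)+(1-\lambda)(K_{X}+\tfrac{n+1}{n+2}D)$, which is cleaner and avoids re-running the cone-theorem computation.
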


\begin{proof}
Since we are assuming $r(X, D)<1$, by Corollary \ref{gap} we know
that
\begin{align}\notag
\tau(X, D)\leq r(X, D)\leq\frac{n+1}{n+2}.
\end{align}
It then remains to check the ampleness modulo $D$. As in the proof
of Proposition \ref{prop}, observe that given an irreducible curve
$C$ such that $(K_{X}+D)\cdot C>0$ then $(K_{X}+\alpha D)\cdot C>0$
for any $\alpha\in(\frac{n+1}{n+2}, 1]$. By Theorem \ref{bpf}, for
any $\alpha\in(\frac{n+1}{n+2}, 1)$ we have that $K_{X}+\alpha D$ is
semi-ample. By Fact \ref{finite} the proof is then complete.
\end{proof}

Note that the strategy behind the proof of Theorem \ref{Amodulo} can
be used to prove the following.

\begin{corollary}\label{semipositive}
Let $(X, D)$ be such that $K_{X}+D$ is big, nef and strictly nef
modulo $D$. Assume there are no irreducible curves $C$ such that
$(K_{X}+D)\cdot C=0$ and $K_{X}\cdot C<0$. Then, for any
$\alpha\in(\frac{n+1}{n+2}, 1)$, the $\rr$-cohomology class of
$K_{X}+\alpha D$ can be represented by a smooth closed $(1, 1)$-form
which is everywhere positive semi-definite and strictly positive
outside $D$.
\end{corollary}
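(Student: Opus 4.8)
The plan is to follow closely the proof of Theorem \ref{Amodulo}, but instead of invoking ampleness and openness of the ample cone (which would require $r(X,D)<1$ to be assumed outright), I would extract the numerical positivity directly from the hypothesis on curves and then convert it into a statement about a smooth representative. First I would observe that the hypothesis — no irreducible curve $C$ with $(K_X+D)\cdot C=0$ and $K_X\cdot C<0$ — is precisely the condition in Proposition \ref{prop}, so $r(X,D)<1$, and hence by Corollary \ref{gap} in fact $r(X,D)\le\frac{n+1}{n+2}$ and $\tau(X,D)\le r(X,D)\le\frac{n+1}{n+2}$. Thus for every $\alpha\in(\frac{n+1}{n+2},1)$ the $\rr$-divisor $K_X+\alpha D$ is big and nef.

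Next I would upgrade nefness to strict nefness modulo $D$. As in the proof of Proposition \ref{prop}, for any irreducible curve $C$ with $(K_X+D)\cdot C>0$ one has
\begin{align}\notag
(K_X+\alpha D)\cdot C=\alpha(K_X+D)\cdot C+(1-\alpha)K_X\cdot C,
\end{align}
and the argument there (decomposing $C$ via the cone theorem and using $r(X,D)\le\frac{n+1}{n+2}$) shows this is positive for $\alpha>\frac{n+1}{n+2}$; for curves $C$ with $(K_X+D)\cdot C=0$ the hypothesis gives $K_X\cdot C\ge0$, and since $K_X+D$ is strictly nef modulo $D$ such a $C$ must lie in $D$ — so on curves \emph{not} contained in $D$ we get $(K_X+\alpha D)\cdot C>0$. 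Since $(X,\alpha D)$ is klt (Proposition \ref{klt}) and $K_X+\alpha D$ is big and nef, Theorem \ref{bpf} makes $K_X+\alpha D$ semi-ample, and then by Fact \ref{finite} it is ample modulo $D$.

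Finally I would produce the smooth semi-positive form. Since $m(K_X+\alpha D)$ is base-point free for suitable $m$, it defines a morphism $\phi:X\to\pp^N$, finite over $X\setminus D$ by the strict nefness modulo $D$ established above; pulling back the Fubini–Study form and dividing by $m$ gives a smooth closed $(1,1)$-form in the class of $K_X+\alpha D$ that is everywhere positive semi-definite and strictly positive at every point where $\phi$ is an immersion, in particular on $X\setminus\B_+(K_X+\alpha D)\supseteq X\setminus D$. The main obstacle I anticipate is this last point: semi-ampleness alone gives a form that degenerates exactly along the fibers and ramification of $\phi$, and one must check that the non-immersion locus is contained in $D$ — this is exactly the content of $\B_+(K_X+\alpha D)\subseteq D$, which follows from Fact \ref{finite} together with Nakamaye's theorem (Theorem \ref{Nakamaye}) applied to the fact that $(K_X+\alpha D)^k\cdot V>0$ for every positive-dimensional $V\not\subseteq D$. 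Granting that, the positivity of the pulled-back Fubini–Study form outside $D$ is routine.
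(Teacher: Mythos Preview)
Your proposal is correct and follows essentially the same route as the paper: the paper's proof consists of the single observation that the curve hypothesis is exactly the condition in Proposition \ref{prop}, so $r(X,D)<1$, after which one is in the setting of Theorem \ref{Amodulo} (semi-ampleness from Theorem \ref{bpf}, ampleness modulo $D$ via Fact \ref{finite}); you have simply unpacked those steps and made explicit the final passage from semi-ampleness and $\B_{+}(K_X+\alpha D)\subseteq D$ to the smooth semi-positive representative via pullback of the Fubini--Study form.
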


\begin{proof}
By Proposition \ref{prop} we have that $r(X, D)<1$ if and only if
there are no irreducible curves $C$ such that $(K_{X}+D)\cdot C=0$
and $K_{X}\cdot C<0$.
\end{proof}

We decided to explicitly state Corollary \ref{semipositive} because
it has nice applications in the theory of negatively curved
K\"ahler-Einstein metrics on quasi-projective varieties. These
applications rely on some recent advancements in the theory of
degenerate complex Monge-Amp\`ere equations with singular right hand
side \cite{Pali}. Thus, let $(X, D)$ be as in Corollary
\ref{semipositive}. For any $\alpha\in(\frac{n+1}{n+2}, 1)$ denote
by $\gamma_{\alpha}$ a smooth semi-positive form representing the
cohomology class of $K_{X}+\alpha D$. Given a smooth volume form
$\Omega$ on $X$ and Hermitian metrics $\|\cdot\|_{i}$ on
$\mathcal{O}_{X}(D_{i})$, consider the family of degenerate complex
Monge-Amp\`ere equations
\begin{align}\label{degenerate}
(\gamma_{\alpha}+\sqrt{-1}\partial\overline{\partial}\varphi)^{n}=(\gamma_{\varphi}^{\alpha})^{n}=
e^{\varphi}\frac{\Omega}{\prod_i\|\sigma_{i}\|^{2\alpha}}
\end{align}
where by $D_{i}$ and $\sigma_{i}$ we respectively denote the
irreducible components of $D$ and the associated defining sections.
By Theorem 6.1 in \cite{Pali}, for a fixed $\alpha$ this equation
admits a unique solution $\varphi\in L^{\infty}(X)$ which is smooth
on $X\backslash D$. Moreover, by appropriately choosing $\Omega$ and
the $\|\cdot\|_{i}$ in \ref{degenerate} we can arrange
$\gamma^{\alpha}_{\varphi}$ to be Einstein with negative scalar
curvature on $X\backslash D$. To sum up, given a pair $(X, D)$ such
that $K_{X}+D$ is big, strictly nef modulo $D$ and with $r(X, D)<1$
then the K\"ahler-Einstein theory on $X\backslash D$ is rather
well-understood. Of course, the asymptotic behavior of
$\gamma^{\alpha}_{\varphi}$ along $D$ as well as the limiting
behavior of this family as $\alpha\rightarrow 1$ remain quite
mysterious.

\begin{remark}\label{Damin1}
The $1$-parameter family of negatively curved K\"ahler-Einstein
metrics $\gamma^{\alpha}_{\varphi}$ cannot represent ample
cohomology classes unless there are no irreducible curves $C$ such
that $(K_{X}+D)\cdot C=0$ and $K_{X}\cdot C\leq0$. Recall that in
this case the form $\gamma_{\alpha}$ can be chosen to be a K\"ahler
metric. Compare Proposition \ref{luca} and Theorem \ref{tian}. In
this case the asymptotic behavior of $\gamma^{\alpha}_{\varphi}$
along $D$ is much better understood. In fact, thanks to the recent
works \cite{Mazzeo} and \cite{Mazzeo2}, we know that for any
$\alpha\in(\frac{n+1}{n+2}, 1)$ the K\"ahler-Einstein current
$\gamma^{\alpha}_{\varphi}$ has edge singularities of cone angle
$2\pi(1-\alpha)$ along $D$.
\end{remark}

Let us conclude this section by giving an example of a pair $(X, D)$
which admits both complete and incomplete negatively curved
K\"ahler-Einstein metrics on $X\backslash D$, but such that
$K_{X}+\alpha D$ is never ample for any $\alpha\in\rr$.

\begin{remark}\label{Damin}
Let $\mathcal{H}\times\mathcal{H}$ be the product of two copies of
the upper half plane of $\cc$. Let $\Gamma$ be a discrete torsion
free non-uniform subgroup of $SL(2, \rr)\times SL(2, \rr)$. Denote
by $X^{*}$ the Baily-Borel compactification \cite{Ash} of
$(\mathcal{H}\times\mathcal{H})/\Gamma$. Let $X$ be the
\emph{minimal} resolution of $X^{*}$. By appropriately choosing
$\Gamma$, see \cite{Hirzebruch} for beautiful \emph{explicit}
constructions, the resolution of the cusp points in $X^{*}$ is given
by a divisor $D$ whose connected components are cycles of smooth
rational curves with negative self-intersection. More precisely, for
any of the irreducible components we have $D_{i}^{2}\leq-2$.
Furthermore, we have at least one $(-2)$-curve in $D$ while there
are no $(-2)$-curves in $X\backslash D$ since the curvature of
$\mathcal{H}\times\mathcal{H}$ is nonpositive. Now, let $D_{i}$ be a
$(-2)$-curve in $D$. We then have that $(K_{X}+\alpha D)\cdot
D_{i}=0$ constantly in $\alpha$. On the other hand $X\backslash D$
admits a \emph{complete} nonpositively curved K\"ahler-Einstein
metric coming from $\mathcal{H}\times\mathcal{H}$. Moreover, the
pair $(X, D)$ satisfies the hypothesis of Corollary
\ref{semipositive}, see Example \ref{menodue}. Thus, for
$\alpha\in(\frac{3}{4}, 1)$, $X\backslash D$ also admits a
$1$-parameter family of \emph{incomplete} negatively curved
K\"ahler-Einstein metrics $\gamma^{\alpha}_{\varphi}$. Of course,
this family has \emph{not} edge singularities along $D$ in the sense
of Definition \ref{Tian}.
\end{remark}

\section{Positivity for Fano and log-Fano varieties}\label{Fano}

In this section we study the existence of uniform bound for the nef
threshold when $X$ is a Fano variety or a log-Fano variety, i.e.
$-K_{X}$ or $-(K_{X}+D)$ is ample.

\begin{question}\label{q3}
Let $X$ be a smooth variety and let $D$ an effective divisor with
$-(K_{X}+D)$ ample. Is there a fixed $\alpha_{n}<1$, which depends
only on $n$, such that $-(K_{X}+\alpha D)$ is ample for any
$\alpha\in(\alpha_{n},1]$?
\end{question}

It is quite tempting to do the same thing as we did in the previous
section. For example if we apply Lemma \ref{lem1} we get that
$-(K_{X}+\frac{n+1}{n} D)$ is nef. Since we want to deal with
$\alpha<1$ we do not get any useful information. Probably the best
reason that explains why we cannot do the same thing is that the
above question has negative solution.

\begin{example}\label{ex1}
We follow the notation in \cite{Har}. Let $C=\pp^{1}$ and
$\mathcal{E}:=\oo_{C}\oplus\oo_{C}(-e)$ where $e$ is a positive
integer. We define $X_{e}:=\pp(\mathcal{E})$, it is a rational ruled
surface. By Corollary V.2.11 in \cite{Har} we know that
$K_{X_{e}}\equiv -2C_{0}-(e+2)f$ where $C_{0}$ is the zero section
and $f$ is a general fibre. Let $D:=C_{0}$. Then
$-K_{X_{e}}-D=C_{0}+(e+2)f$ is an ample divisor by Corollary V.2.18
in \cite{Har}. Then by the same corollary $-K_{X_{e}}-\alpha
D=(2-\alpha)C_{0}+(e+2)f$ is ample if and only if
$\alpha>\frac{e-2}{e}$. Then if $e$ goes to infinity $\alpha$ must
be arbitrarily close to one. Note that $(-(K_{X_{e}}+D))^{2}=e+4$.
\end{example}

We would like to understand how $\alpha$ depends on the pair
$(X,D)$. We will prove that the situation in Example \ref{ex1} is
the most general and that $\alpha$ depends only on the dimension of
$n$ and the top self-intersection of $-(K_{X}+D)$. We need two
preliminary results. The first one is an effective version of
Koll\'ar-Matsusaka theorem due to Demailly and Siu. We follow the
presentation in \cite{Laz2}.

\begin{theorem}[Koll\'ar-Matsusaka, Demailly, Siu]\label{matsu}
Let $X$ be a smooth projective variety. Let $L$ be an ample divisor
and $B$ be a nef divisor. Then for any $m\geq M$ the divisor $mL-B$
is very ample, where \bdism
M=(2n)^{(3^{n-1}-1)/2}\frac{\left(L^{n-1}\cdot(B+H)\right)^{(3^{n-1}+1)/2}\left(L^{n-1}\cdot
H\right)^{3^{n-2}(n/2-3/4)-1/4}}{\left(L^{n}\right)^{3^{n-2}(n/2-1/4)-1/4}},
\edism and $H=(n^{3}-n^{2}-n-1)(K_{X}+(n+2)L)$.
\end{theorem}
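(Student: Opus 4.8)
Theorem \ref{matsu} is the effective form of Matsusaka's big theorem and is quoted from \cite{Laz2} rather than reproved here; we indicate the structure of the argument one reconstructs from loc.\ cit. The plan is to reduce very ampleness of $N:=mL-B$ to a vanishing statement via multiplier ideals, and then to run an induction on $n=\dim X$.

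For the reduction, note that $N$ is very ample once, for every length-two subscheme $Z\subset X$ (the infinitely-near ones included, so as to separate tangent vectors), the restriction $H^{0}(X,N)\to H^{0}(X,N\otimes\oo_{Z})$ is surjective. By Nadel vanishing this holds as soon as $N$ carries a singular metric with strictly positive curvature current whose multiplier ideal agrees with $\mathcal{I}_{Z}$ near $Z$; writing $N=K_{X}+(N-K_{X})$ and absorbing the canonical class into the nef correction term $H$ --- which is legitimate precisely because $K_{X}+(n+2)L$ is positive enough, cf.\ Lemma \ref{lem1} and Theorem \ref{Kollar} --- this in turn follows if some $\qq$-divisor proportional to $N$ supports an effective divisor having an isolated log canonical centre exactly along $Z$.

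First I would produce such centres by the usual ``lower the dimension of the centre'' procedure: bigness of $N$ gives $h^{0}(kN)\sim k^{n}(N^{n})/n!$, so by Riemann--Roch one finds, for $k$ above an explicit threshold, a section of $kN$ vanishing to high order at a prescribed point $x$; a tie-breaking perturbation by a small ample class makes the log canonical centre through $x$ unique and drops its dimension, and after $n$ steps it is reduced to $Z$. The only quantitative input is an effective lower bound for the multiple $k$ needed at each stage, controlled by intersection numbers of the form $(L^{n-1}\cdot B)$, $(L^{n-1}\cdot H)$ and $(L^{n})$.

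The engine producing the displayed bound is the induction on $n$: one selects a smooth divisor $D'\in|pL|$ with $p$ controlled by those intersection numbers, applies the $(n-1)$-dimensional statement to $(D',L|_{D'},B|_{D'})$, and lifts sections from $D'$ to $X$ using vanishing of $H^{1}(X,N-pL)$; the base cases $n=1,2$ are classical (degree $\geq 2g+1$ on curves, Reider on surfaces). Each descent step roughly cubes the required multiple --- one factor of $p$ to choose $D'$, plus the lower-dimensional bound is already a high power of intersection numbers, each of which acquires a factor of $p$ on restriction --- so after $n-1$ steps the exponent $3^{n-1}$ and the precise shape of $M$ emerge. The main obstacle is exactly this bookkeeping: propagating the intersection-number estimates through the recursion so that the constant comes out to be $M$ on the nose. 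The analytic and vanishing inputs are, by now, standard; for the complete argument see \cite{Laz2}.
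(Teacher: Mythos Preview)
The paper does not prove Theorem \ref{matsu}; it is quoted as a black box from \cite{Laz2}, exactly as you say in your opening sentence. So there is nothing to compare your sketch against, and strictly speaking your first sentence alone would suffice.

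Since you chose to sketch the argument anyway, one clarification is in order: your outline conflates two distinct multiplier-ideal techniques. Your second paragraph --- producing isolated log canonical centres by the Angehrn--Siu ``cutting down the centre'' procedure --- is the engine behind effective \emph{freeness} and point-separation for adjoint bundles $K_{X}+mL$ (cf.\ \cite{Ang}), and it gives \emph{polynomial} bounds in $n$. It is not the mechanism in Siu's effective Matsusaka, whose bound grows like $3^{n-1}$. The actual mechanism is the one in your third paragraph: a Demailly--Morse-type lower bound on $h^{0}$ produces a divisor $D'\in|pL|$ with $p$ controlled by the displayed intersection numbers, one restricts and applies the $(n{-}1)$-dimensional statement on $D'$, and lifts via $H^{1}$-vanishing. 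The tripling of exponents comes from this recursion, not from an LC-centre argument. If you want to keep a sketch, drop the second paragraph and expand the third.

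Two smaller inaccuracies: Lemma \ref{lem1} gives nefness of $K_{X}+(n+1)L$, not of $K_{X}+(n+2)L$; the $(n+2)$ in the definition of $H$ enters because one needs an effective \emph{very ampleness} statement for $K_{X}+(n+2)L$ (this is where the constant $n^{3}-n^{2}-n-1$ comes from in Demailly's work), which the cone theorem alone does not provide. And Theorem \ref{Kollar} concerns birationality of adjoint maps, not positivity of $K_{X}+(n+2)L$, so it is not the right cross-reference either.
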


The second result is a theorem of M$^{\text{c}}$Kernan and it is an
important step in the study of boundedness of singular Fano varieties,
see \cite{Mck} and Corollary 1.8 in \cite{HMX}.

\begin{theorem}[M$^{\text{c}}$Kernan]\label{mck}
Fix $r$ and $n$ integers. Then there is a real number $M$ such that
if we have a pair $(X,D)$ where
\begin{enumerate}
\item X has dimension $n$,
\item $-(K_{X}+D)$ is big and nef,
\item $(X,D)$ is klt and
\item $r(K_{X}+D)$ is Cartier
\end{enumerate}
then $(-(K_{X}+D))^{n}\leq M$.
\end{theorem}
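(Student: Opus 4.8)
This is a boundedness statement for the anti-log-canonical volume of klt weak log Fano pairs of fixed dimension and fixed Cartier index, so the plan is to follow the circle of ideas around the Borisov--Alexeev--Borisov conjecture in the form worked out in \cite{Mck} and in Corollary 1.8 of \cite{HMX}. First I would reduce to the case $-(K_{X}+D)$ ample: since $-(K_{X}+D)$ is big and nef and $(X,D)$ is klt, Theorem~\ref{bpf} makes $|-m(K_{X}+D)|$ base point free for $m\gg 0$, and the resulting morphism $f\colon X\to X'$ contracts exactly the curves $C$ with $(K_{X}+D)\cdot C=0$; with $D':=f_{*}D$ one gets a klt pair $(X',D')$ with $r(K_{X'}+D')$ Cartier, $-(K_{X'}+D')$ ample and $(-(K_{X}+D))^{n}=(-(K_{X'}+D'))^{n}$. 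Next I would extract from condition (4) a uniform bound on the singularities: on a log resolution, writing $K_{Y}+\sum a_{j}E_{j}=f^{*}(K_{X}+D)$ as an equality of $\qq$-divisors and pulling back the Cartier divisor $r(K_{X}+D)$, the divisor $r\sum a_{j}E_{j}$ is integral, so $ra_{j}\in\zz$; as klt forces $a_{j}<1$ this gives $a_{j}\le 1-\tfrac{1}{r}$, i.e.\ $(X,D)$ is $\tfrac{1}{r}$-klt, and reading the same relation at the generic point of each component of $D$ (where $X$ is smooth) shows the coefficients of $D$ lie in the fixed finite set $\{\tfrac{1}{r},\dots,\tfrac{r-1}{r}\}$.

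The core step is then the volume bound, which I would prove by contradiction. Assume there are pairs $(X_{i},D_{i})$ as above with $v_{i}:=(-(K_{X_{i}}+D_{i}))^{n}\to\infty$. Pick a general (hence smooth) point $x_{i}\in X_{i}$ and a rational number $a_{i}$ with $n/v_{i}^{1/n}<a_{i}\to 0$, so that $(-a_{i}(K_{X_{i}}+D_{i}))^{n}=a_{i}^{n}v_{i}>n^{n}$. The standard construction of singular divisors (asymptotic Riemann--Roch and multiplier ideals at $x_{i}$) then yields an effective $\qq$-divisor $\Gamma_{i}\sim_{\qq}-a_{i}(K_{X_{i}}+D_{i})$ with $(X_{i},D_{i}+\Gamma_{i})$ not klt at $x_{i}$; letting $c_{i}\in(0,1]$ be the log canonical threshold and tie-breaking inside the ample $\qq$-linear system, I arrange that $(X_{i},D_{i}+c_{i}\Gamma_{i})$ is log canonical with a unique minimal non-klt centre $W_{i}\ni x_{i}$. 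Since $K_{X_{i}}+D_{i}+c_{i}\Gamma_{i}\sim_{\qq}(1-a_{i}c_{i})(K_{X_{i}}+D_{i})$ and $a_{i}c_{i}\le a_{i}\to 0$, this is a log canonical, non-klt, log Fano pair; moreover $c_{i}\Gamma_{i}$ has all coefficients tending to $0$, so for $i\gg 0$ it contains no component of coefficient $\ge 1$, hence $W_{i}$ is a proper subvariety, and running the construction at every general point shows the $W_{i}$ cover $X_{i}$. Kawamata's subadjunction formula then produces a boundary $\Theta_{i}$ on the normalisation $W_{i}^{\nu}$ with $(W_{i}^{\nu},\Theta_{i})$ of log Fano type in dimension $<n$, and the ACC theorem for log canonical thresholds of \cite{HMX} keeps the coefficients of $\Theta_{i}$ in a fixed DCC set depending only on $n$ and $r$. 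By induction on the dimension the anticanonical volume of $(W_{i}^{\nu},\Theta_{i})$ is bounded, which bounds the $-(K_{X_{i}}+D_{i})$-degree of the members of a covering family of subvarieties of $X_{i}$, and hence bounds $v_{i}$ --- a contradiction.

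When $-K_{X}$ is actually ample one can replace the inductive step by a more direct argument: the $\tfrac{1}{r}$-klt bound above keeps bend-and-break effective, giving through a general point a rational curve $C$ with $-K_{X}\cdot C\le 2n$ and hence $-(K_{X}+D)\cdot C\le 2n$ on a general member of the covering family (cf.\ Theorem~\ref{cone}), and one then bounds the degree of the anticanonical image by combining this with effective birationality of $|-m(K_{X}+D)|$, obtained on a resolution from the effective Matsusaka-type estimate of Theorem~\ref{matsu}. The genuinely hard part in either route is the control of the singularities through the dimension induction --- ensuring that the boundary coefficients produced by adjunction to the non-klt centres do not accumulate --- which is exactly the ACC for log canonical thresholds of \cite{HMX}; the effective ``creation of singularities'' and the tie-breaking also require care. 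All of this is carried out in \cite{Mck} and in Corollary 1.8 of \cite{HMX}, to which we refer for the complete argument.
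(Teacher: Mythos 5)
The paper does not actually prove this statement: it is quoted as a known theorem of M$^{\text{c}}$Kernan, with the proof delegated entirely to \cite{Mck} and to Corollary 1.8 of \cite{HMX}. Your outline (reduction to the ample case via the base point free theorem, extracting $\tfrac{1}{r}$-klt and bounded coefficients from the Cartier-index hypothesis, covering families of non-klt centres with tie-breaking, subadjunction plus ACC, and induction on the dimension) is a faithful sketch of the argument in those references, and since you too ultimately defer to the same sources for the complete details, your proposal matches the paper's treatment of this result.
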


We can now prove Theorem \ref{logf} as stated in the Introduction
\ref{introduction}.

\begin{proof}[Proof of Theorem \ref{logf}]
Suppose that there exists $M_{0}$ as in the statement. If we prove
that there exists a positive integer $M$ such that
$-K_{X}+M(-K_{X}-D)$ is ample for any $(X,D)\in\bb$ then we are
done, because it implies that we can take
$\alpha_{0}=\frac{M}{M+1}$. In order to prove the existence of such
an integer we want to apply effective Koll\'ar-Matsusaka. Write
$L:=-(K_{X}+D)$ which is ample by hypothesis. Then if we define
$B=K_{X}+(n+1)L$ by Lemma \ref{lem1}, we know that $B$ is nef. For
the sake of clarity we do not keep track of all the constants which
appears in Theorem \ref{matsu}. In order to get an integer $M$ which
depends only on $L^{n}$ we need to bound all the intersection
numbers appearing in the formula with $L^{n}$. We can write $B=nL-D$
and $H=c(n)((n+1)L-D)$ where $c(n)=n^{3}-n^{2}-n-1$. Since $L$ is
ample and $D$ is effective we get the following inequalities \bdism
L^{n-1}\cdot B<nL^{n} \qquad \text{and} \qquad L^{n-1}\cdot
H<c(n)(n+1)L^{n}. \edism Plugging them in the formula of Theorem
\ref{matsu} we get that there are positive constants $a(n)$ and
$b(n)$ depending only on $n$ such that for any $m\geq
a(n)(L^{n})^{b(n)}$ the divisor $mL-B$ is ample. This is exactly
what we wanted at the beginning of the proof.

On the other hand suppose that we can find $\alpha_{0}$ as in $(1)$.
Choose a rational number $\alpha\in\left(\alpha_{0},1\right)$ and
let $r$ be its denominator. For any pair $(X,D)\in\bb$ we have that
$(X,\alpha D)$ is a klt pair and $r(K_{X}+\alpha D)$ is Cartier.
Then Theorem \ref{mck} applies and we get that there exists $M_{0}$
such that $(-(K_{X}+\alpha D))^{n}\leq M_{0}$ for any $(X,D)\in\bb$.
Since $-K_{X}-D+(1-\alpha)D=-K_{X}-\alpha D$ we get that also
$(-(K_{X}+D))^{n}$ is uniformly bounded.
\end{proof}

The following is a corollary of the first part of the proof of the
previous theorem.

\begin{corollary}
Fix a positive integer $n$. Let $X$ be a smooth projective variety
of dimension $n$ and let $D$ be an effective divisor such that
$-(K_{X}+D)$ is ample. Then there are positive integers $a(n)$ and
$b(n)$, depending only on $n$, such that \bdism
-\left(K_{X}+\frac{\alpha}{\alpha+1} D \right) \edism is ample for
any $\alpha\geq a(n)\left((-K_{X}-D)^{n}\right)^{b(n)}$. Furthermore
$a(n)$ and $b(n)$ are explicitly computable functions of $n$.
\end{corollary}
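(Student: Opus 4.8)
The statement is essentially a re-reading of the first half of the proof of Theorem~\ref{logf}, so the plan is to isolate that computation and keep careful track of the dependence of the constants. First I would set $L:=-(K_{X}+D)$, ample by hypothesis, and $B:=K_{X}+(n+1)L$, which is nef by Lemma~\ref{lem1}, and feed the pair $(L,B)$ into the effective Koll\'ar-Matsusaka theorem (Theorem~\ref{matsu}). Exactly as in the proof of Theorem~\ref{logf}, writing $B=nL-D$ and $H=(n^{3}-n^{2}-n-1)\bigl((n+1)L-D\bigr)$ and using that $L$ is ample while $D$ is effective, one bounds every intersection number occurring in the formula of Theorem~\ref{matsu} by a dimensional multiple of $L^{n}$, e.g.\ $L^{n-1}\cdot B<nL^{n}$ and $L^{n-1}\cdot H<(n^{3}-n^{2}-n-1)(n+1)L^{n}$. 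Substituting these estimates collapses the quantity $M$ to an expression of the form $a(n)\,(L^{n})^{b(n)}$ with $a(n),b(n)$ explicitly computable positive integers, and yields: $mL-B$ is ample for every integer $m\geq a(n)(L^{n})^{b(n)}$ (enlarging $a(n)$ if necessary so that this threshold exceeds $n$).

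Next I would record the elementary identity linking $mL-B$ to a twisted anticanonical class. Since $mL-B=(m-n-1)L-K_{X}$ and $L=-(K_{X}+D)$, one computes
\begin{align}\notag
mL-B=-(m-n)K_{X}-(m-n-1)D=(m-n)\Bigl(-\bigl(K_{X}+\tfrac{m-n-1}{m-n}D\bigr)\Bigr),
\end{align}
so, putting $\alpha:=m-n-1$, ampleness of $mL-B$ is equivalent to ampleness of $-\bigl(K_{X}+\tfrac{\alpha}{\alpha+1}D\bigr)$. Hence this divisor is ample for every integer $\alpha\geq a(n)(L^{n})^{b(n)}-n-1$, and after absorbing the additive shift into $a(n)$ we may state the threshold as $a(n)(L^{n})^{b(n)}$.

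Finally, to upgrade from integral $\alpha$ to all real $\alpha$ above the threshold I would use convexity of the ample cone: if $\alpha\geq\alpha_{0}$ then $\tfrac{\alpha}{\alpha+1}\in[\tfrac{\alpha_{0}}{\alpha_{0}+1},1)$, and for any such $t$ the $\rr$-divisor $-(K_{X}+tD)$ is a convex combination of the two ample divisors $-\bigl(K_{X}+\tfrac{\alpha_{0}}{\alpha_{0}+1}D\bigr)$ and $-(K_{X}+D)$, hence ample. This gives the claimed conclusion for all real $\alpha\geq a(n)\bigl((-K_{X}-D)^{n}\bigr)^{b(n)}$.

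There is no genuine obstacle here beyond bookkeeping; the only point needing attention is verifying that after the substitutions $L^{n-1}\cdot B<nL^{n}$ etc.\ the quantity $M$ of Theorem~\ref{matsu} really becomes a function of $n$ and $L^{n}$ alone, with no surviving dependence on the individual intersection numbers. Since the exponents in Theorem~\ref{matsu} are fixed rational functions of $n$ and the only geometric input left after the substitution is a power of $L^{n}$, this is immediate, and tracking the constants one chose not to write down in the proof of Theorem~\ref{logf} makes $a(n)$ and $b(n)$ explicit.
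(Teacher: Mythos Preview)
Your proposal is correct and follows essentially the same approach as the paper: the paper states this corollary with a one-line pointer to ``the first part of the proof of the previous theorem,'' and you have faithfully unpacked that computation, including the identity $mL-B=(m-n)\bigl(-(K_{X}+\tfrac{m-n-1}{m-n}D)\bigr)$ which is exactly the relation $\alpha_{0}=\tfrac{M}{M+1}$ implicit there. Your convexity step to pass from integral to real $\alpha$ is a small bonus the paper leaves implicit.
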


In order to state our next corollary we need a preliminary
definition.

\begin{definition}
Let $X$ be an irreducible projective variety and $L$ be a Cartier
divisor. Then the volume of $L$ is defined as follow \bdism
\vol(L):=\limsup_{m\to\infty}\frac{h^{0}(X,\oo_{X}(mL))}{m^{n}/n!}.
\edism
\end{definition}

By definition $\vol(L)>0$ if and only if $L$ is big. If $L$ is big
and nef then $\vol(L)=L^{n}$ but in general it is very hard to
compute the volume of a divisor.

\begin{corollary}
Let $\bb$ be a set of pairs $(X,D)$ such that $X$ is a smooth
projective variety of dimension $n$ and $D$ is a reduced effective
divisor with simple normal crossing support such that $-(K_{X}+D)$
is ample. Suppose there exists a positive integer $M$ such that
$\vol(-K_{X})\leq M$ for any $X\in\bb$. Then there exists a positive
number $\alpha_{0}<1$ such that $-(K_{X}+\alpha D)$ is ample for any
$\alpha\in(\alpha_{0},1]$ and any $(X,D)\in\bb$.
\end{corollary}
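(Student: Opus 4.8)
The plan is to reduce the statement to Theorem \ref{logf}. By the equivalence proved in that theorem, it suffices to exhibit a positive integer $M_{0}$ with $(-(K_{X}+D))^{n}\leq M_{0}$ for every $(X,D)\in\bb$; then the implication $(2)\Rightarrow(1)$ of Theorem \ref{logf}, applied with this $M_{0}$, produces the desired uniform $\alpha_{0}<1$.

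To obtain such a bound, first I would use that $D$ is effective to write $-mK_{X}=-m(K_{X}+D)+mD$ for every $m\geq 0$; multiplying by the $m$-th power of the canonical section of $\oo_{X}(D)$ gives an injection $H^{0}(X,\oo_{X}(-m(K_{X}+D)))\hookrightarrow H^{0}(X,\oo_{X}(-mK_{X}))$, hence $h^{0}(X,\oo_{X}(-m(K_{X}+D)))\leq h^{0}(X,\oo_{X}(-mK_{X}))$. Dividing by $m^{n}/n!$ and passing to the limit gives $\vol(-(K_{X}+D))\leq\vol(-K_{X})$. On the other hand $-(K_{X}+D)$ is ample, hence big and nef, so $\vol(-(K_{X}+D))=(-(K_{X}+D))^{n}$. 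Combining these with the hypothesis yields $(-(K_{X}+D))^{n}\leq\vol(-K_{X})\leq M$. Since $X$ is smooth and $D$ is integral, $K_{X}+D$ is Cartier and $(-(K_{X}+D))^{n}$ is a positive integer, so we may take $M_{0}=M$ and conclude by Theorem \ref{logf}.

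There is essentially no serious obstacle here: the only ingredients beyond Theorem \ref{logf} are the monotonicity of the volume under adding an effective divisor and the identity $\vol=(\cdot)^{n}$ for big and nef divisors, both of which are standard (see \cite{Laz1}). The content of the corollary lies entirely in Theorem \ref{logf}; the role of the volume here is simply to control the self-intersection $(-(K_{X}+D))^{n}$ by the a priori more flexible invariant $\vol(-K_{X})$, which need not equal a self-intersection number when $-K_{X}$ fails to be nef.
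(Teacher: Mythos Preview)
Your proof is correct and follows exactly the same approach as the paper: reduce to Theorem \ref{logf} via the inequality $(-(K_{X}+D))^{n}\leq\vol(-K_{X})$, which you justify by the standard monotonicity of volume under adding an effective divisor together with $\vol(L)=L^{n}$ for big and nef $L$. The paper's proof is a one-line appeal to Theorem \ref{logf} and this same inequality, so you have simply spelled out the details.
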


\begin{proof}
It simply follows from Theorem \ref{logf} and the fact that
$(-(K_{X}+D))^{n}\leq \vol(-K_{X})$.
\end{proof}

The above corollary tells us that we can choose $\alpha_{0}$ to be
independent of $D$.

Let us compute the volume of the anti-canonical divisors in Example
\ref{ex1}. We will use the following theorem of Zariski. See
\cite{Laz1} for details.

\begin{theorem}[Zariski decomposition]\label{zar}
Let $X$ be a smooth projective surface and let $L$ be a
pseudo-effective divisor. Then $L$ can be written uniquely as a sum
$L=P+N$ of $\qq$-divisors with the following properties:
\begin{enumerate}
\item $P$ is a nef divisor;
\item $N=\sum a_{i}E_{i}$ is effective, and if $N\neq 0$ then the intersection matrix $(E_{i}\cdot E_{j})_{i,j}$ is negative definite.
\item $P\cdot E_{i}=0$ for any component $E_{i}$ of $N$.
\end{enumerate}

Furthermore, the natural map  

\bdism
H^{0}(X,\oo_{X}(mL-\left\lceil
mN\right\rceil)) \rightarrow   H^{0}(X,\oo_{X}(mL)), 
\edism

is bijective for every $m\geq 1$.

\end{theorem}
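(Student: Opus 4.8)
The statement bundles three claims --- uniqueness of the decomposition, its existence, and the cohomological identity at the end --- and the plan is to treat them in that order. Only two tools are really needed: the Hodge index theorem (the intersection form on $N^{1}(X)_{\rr}$ has signature $(1,\rho-1)$), and the elementary fact that a negative definite symmetric matrix $M$ whose off-diagonal entries are non-negative has $M^{-1}$ with all entries $\le 0$ (so $-M$ is a nonsingular $M$-matrix). Since the intersection matrix $(E_{i}\cdot E_{j})$ of distinct irreducible curves has non-negative off-diagonal entries, this sign information is available for every negative definite configuration. I will also use the negativity lemma: if $E_{1},\dots,E_{k}$ are distinct irreducible curves and there is a pseudo-effective divisor $L$ with $L\cdot E_{i}<0$ for all $i$, then $(E_{i}\cdot E_{j})$ is negative definite --- proved by producing, via a Perron--Frobenius argument on the intersection matrix, an effective combination $v=\sum x_{i}E_{i}$ with $v^{2}\ge 0$, and deriving a contradiction with $L\cdot v<0$ via Hodge index. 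A special case, obtained the same way, is that any irreducible curve $C$ with $L\cdot C<0$ for pseudo-effective $L$ satisfies $C^{2}<0$ (otherwise $C$ would be nef, forcing $L\cdot C\ge 0$).

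For uniqueness, suppose $L=P+N=P'+N'$ are two decompositions as in the statement. Pairing the numerical identity $P-P'\equiv N'-N$ against $P$ and $P'$, and using that $P$ (resp.\ $P'$) is orthogonal to the components of $N$ (resp.\ $N'$) and that $P,P'$ are nef, gives $(P-P')\cdot P=N'\cdot P\ge 0$ and $(P-P')\cdot P'=-N\cdot P'\le 0$, hence $(P-P')^{2}\ge 0$; Hodge index together with the nefness of $P,P'$ then forces $P\equiv P'$ (the boundary cases where $P^{2}=0$ or $P'^{2}=0$ being handled directly), so $N\equiv N'$, and writing $N-N'=A-B$ with $A,B$ effective without common components --- $A$ supported on $\supp(N)$, $B$ on $\supp(N')$ --- one gets $0\le A\cdot B=A^{2}\le 0$, whence $A=0$ and likewise $B=0$ by negative definiteness. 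For existence I would look at the family of effective $\qq$-divisors $N'$ for which $L-N'$ is pseudo-effective and $\supp(N')$ has negative definite intersection matrix; it is non-empty ($N'=0$), and one checks that a maximal element $N$ exists, the key point being that negative definiteness caps the number of components by $\rho$ and forces finiteness of the curves that can occur. Given $N$, normalize its coefficients so that $P:=L-N$ is orthogonal to each component $E_{i}$, by solving $(E_{i}\cdot E_{j})(a_{j})=(L\cdot E_{i})$; the $M$-matrix sign property makes this solution effective because the $L\cdot E_{i}$ are $\le 0$. It remains to see $P$ is nef: if some irreducible $C$ had $P\cdot C<0$, then $C\notin\supp(N)$, and since $P$ is pseudo-effective the special case above gives $C^{2}<0$; a Schur-complement computation with the $M$-matrix property then shows $\supp(N)\cup\{C\}$ is still negative definite and that enlarging $N$ by a positive multiple of $C$ keeps $L-N$ pseudo-effective, contradicting maximality. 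I expect this last point --- making the maximality argument clean, i.e.\ verifying the enlarged divisor still lies in the family --- to be the main obstacle, and it is precisely where the negativity lemma and the sign of $M^{-1}$ are indispensable.

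Finally, for the cohomological identity: multiplication by a section cutting out $\lceil mN\rceil$ gives an obvious inclusion $H^{0}(X,\oo_{X}(mL-\lceil mN\rceil))\hookrightarrow H^{0}(X,\oo_{X}(mL))$, so I only need surjectivity, i.e.\ that every effective $D'\sim mL$ dominates $\lceil mN\rceil$. Write $D'=\sum b_{i}E_{i}+G$ with the $E_{i}$ the components of $N=\sum a_{i}E_{i}$ and $G$ sharing no component with $N$. Then $D'-mN\equiv mP$ is nef and orthogonal to every $E_{j}$, so intersecting with $E_{j}$ yields $\sum_{i}(b_{i}-ma_{i})(E_{i}\cdot E_{j})=-\,G\cdot E_{j}\le 0$ for all $j$; since $(E_{i}\cdot E_{j})$ is negative definite with non-positive inverse, the vector $(b_{i}-ma_{i})$ is $\ge 0$, i.e.\ $b_{i}\ge ma_{i}$, and integrality of $b_{i}$ gives $b_{i}\ge\lceil ma_{i}\rceil$. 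Hence $D'\ge\lceil mN\rceil$, so $D'-\lceil mN\rceil$ is effective and linearly equivalent to $mL-\lceil mN\rceil$, which proves surjectivity and the claimed equality of spaces of sections.
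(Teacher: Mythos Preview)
The paper does not prove this theorem at all; it merely quotes the statement and refers to \cite{Laz1} for details. So there is no ``paper's own proof'' to compare against, and your outline is essentially the classical argument one finds in Zariski's original paper or in the references (Lazarsfeld, B\u{a}descu). The uniqueness sketch and the cohomological identity are fine; the latter in particular is exactly the standard argument and is carried out correctly.

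There is, however, a real slip in your existence step. After choosing a maximal $N'$ you propose to ``normalize its coefficients'' by solving $(E_{i}\cdot E_{j})(a_{j})=(L\cdot E_{i})$ and then assert that the $M$-matrix property gives $a_{j}\ge 0$ ``because the $L\cdot E_{i}$ are $\le 0$''. That last claim is false in general: components of the negative part need not satisfy $L\cdot E_{i}\le 0$. For a concrete counterexample, blow up $\pp^{2}$ at a point and then at a point on the exceptional curve, obtaining a $(-2)$-curve $E_{1}$ and a $(-1)$-curve $E_{2}$ with $E_{1}\cdot E_{2}=1$; if $H$ is the pullback of a line and $L=H+E_{1}+3E_{2}$, then the Zariski decomposition is $P=H$, $N=E_{1}+3E_{2}$, yet $L\cdot E_{1}=1>0$. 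The fix is to run the construction iteratively: at stage $k$ you have $P_{k}=L-N_{k}$ with $P_{k}\cdot E_{i}=0$ for the curves already in $\supp(N_{k})$, you adjoin the finitely many new curves $C$ with $P_{k}\cdot C<0$, and you solve for the \emph{increment} using the right-hand side $(P_{k}\cdot E_{i})$, which is $\le 0$ for every $E_{i}$ in the enlarged support. The $M$-matrix property then gives a nonnegative increment, so $N_{k+1}\ge N_{k}\ge 0$, and the process terminates because the support is bounded by $\rho(X)-1$. This is precisely the point you flagged as ``the main obstacle'', and it is indeed where the care has to go; once you reorganize the argument this way it goes through.
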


$P$ and $N$ are called respectively the positive and negative parts
of $L$. Note that the last statement of Theorem \ref{zar} tells us
that $\vol(L)=P^{2}$.

\begin{example}
Let $X:=X_{e}$ and $D$ be as in the previous example. We can assume
$e\geq 3$ otherwise $-K_{X}$ is big and nef. Since $-K_{X}$ is big
we can consider the Zariski decomposition $-K_{X}=P+N$. In
particular $\vol(-K_{X})=P^{2}$. It is easy to see that \bdism
-K_{X}=\left(1+\frac{2}{e}\right)(C_{0}+ef)+\left(1-\frac{2}{e}\right)C_{0}
\edism is the Zariski decomposition of $-K_{X}$. Then \bdism
\vol(-K_{X})=P^{2}=\left(1+\frac{2}{e}\right)^{2}(C_{0}+ef)^{2}=(e+4)+\frac{4}{e}.
\edism
\end{example}

We now deal with the case $-K_{X}$ ample. The main question is the
following.

\begin{question}\label{qfano}
Let $X$ be a smooth variety with $-K_{X}$ ample and let $D$ be an
effective divisor. Is there a uniform $\alpha_{n}$, which depends
only on $n$, such that $-(K_{X}+\alpha D)$ is ample for any
$\alpha\in[0,\alpha_{n})$?
\end{question}

It is easy to see that in this case the answer is no even if we fix
$X$. Recall that in the previous case if we fix $X$ we can find a
uniform bound that works for any divisor on $X$ because it is enough
to bound $\vol(-K_{X})$.

\begin{example}\label{ex2}
Let $X=\pp^{n}$ and $D=\oo_{X}(d)$ for some $d>0$. Then
$-(K_{X}+\alpha D)$ is ample if and only if $\alpha<(n+1)/d$. Then
if $d$ is arbitrarily large we need to take $\alpha$ arbitrarily
small.
\end{example}

In order to settle Question \ref{qfano} we need a well known result
of Koll\'ar, Miyaoka and Mori. See Theorem 4.4 in \cite{KMM}.

\begin{theorem}[Koll\'ar-Miyaoka-Mori]\label{kmm}
Fix an integer $n$. Let $X$ be a smooth Fano variety of dimension
$n$. Then \bdism (-K_{X})^{n}\leq
(n+1)^{n}n^{(2^{n}-1)n(n+1)}\left(1+\frac{1}{n}+\frac{n+1}{n(n-1)}\right)^{n}.
\edism In particular the set of smooth Fano varieties of dimension
$n$ forms a bounded family.
\end{theorem}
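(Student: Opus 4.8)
The final statement to prove is Theorem~\ref{logf2}: for a set $\dd$ of pairs $(X,D)$ with $X$ a smooth projective Fano $n$-fold and $D$ reduced effective with simple normal crossing support, the existence of a uniform $\alpha_0<1$ with $-(K_X+\alpha D)$ ample for all $\alpha\in[0,\alpha_0)$ is equivalent to a uniform bound $D\cdot(-K_X)^{n-1}\leq M_0$.

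\begin{proof}[Proof of Theorem \ref{logf2}]
For the implication $(2)\Rightarrow(1)$, the plan is to produce, for each $(X,D)\in\dd$, a single integer $M$ depending only on $n$ and $M_0$ such that $-K_X-\frac{1}{M}D$ is ample; then one may take $\alpha_0=\frac{1}{M}$. First I would use Theorem \ref{kmm}: since every $X$ appearing in $\dd$ is a smooth Fano $n$-fold, the anticanonical degrees $(-K_X)^n$ are bounded above by a constant $V_n$ depending only on $n$, and in fact the $X$'s range in a bounded family. Next I would fix an auxiliary ample divisor; the natural choice is a sufficiently high multiple of $-K_X$ which is very ample with bounded degree (again by boundedness of Fano $n$-folds, or by the effective very-ampleness input Theorem \ref{matsu} applied to $L=-K_X$, $B=0$). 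One then wants to show $-K_X-\frac{1}{M}D$ stays in the ample cone. The key point is that $D$ has bounded degree against $-K_X$: writing $D=\sum D_i$ with each $D_i$ irreducible reduced, the hypothesis $D\cdot(-K_X)^{n-1}\leq M_0$ together with $(-K_X)^n\leq V_n$ controls $D$ inside $N^1(X)$ relative to the fixed ample class, by the Hodge-type inequalities or simply because on a bounded family the relevant cones and intersection pairings are uniformly comparable. Concretely, I would argue that $M(-K_X)-D$ is ample for $M$ large depending only on $n,M_0$: this is again an application of effective Koll\'ar--Matsusaka (Theorem \ref{matsu}) with $L=-K_X$ and the nef divisor $B$ chosen so that $D\preceq B$ with $L^{n-1}\cdot B$ bounded in terms of $L^{n-1}\cdot D\leq M_0$ and $L^n\leq V_n$; the output integer $M$ then depends only on $n$ and $M_0$. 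Dividing by $M$ gives $-(K_X+\frac1M D)$ ample, and by convexity of the ample cone $-(K_X+\alpha D)$ is ample for all $\alpha\in[0,\frac1M)$, so $\alpha_0=\frac1M$ works uniformly over $\dd$.

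For the implication $(1)\Rightarrow(2)$, suppose $\alpha_0<1$ works for all of $\dd$. Fix a rational $\alpha\in(0,\alpha_0)$ with denominator $r$. Then for every $(X,D)\in\dd$ the pair $(X,\alpha D)$ is klt by Proposition \ref{klt} (the coefficients of $\alpha D$ are $\alpha<1$), $-(K_X+\alpha D)$ is ample hence big and nef, and $r(K_X+\alpha D)$ is Cartier. Thus Theorem \ref{mck} applies with these fixed $r$ and $n$, yielding a constant $M$ with $(-(K_X+\alpha D))^n\leq M$ for all $(X,D)\in\dd$. Expanding $(-(K_X+\alpha D))^n=((-K_X)-\alpha D)^n$ and using that $-K_X$ is ample and $D$ effective, the cross term $\binom{n}{1}(-K_X)^{n-1}\cdot(\alpha D)$ is bounded: more precisely $(-K_X)^n - n\alpha\,(-K_X)^{n-1}\cdot D \leq (-(K_X+\alpha D))^n + (\text{higher-order terms in }\alpha D)$, but I prefer the cleaner route: all intersection numbers $(-K_X)^{n-j}\cdot(\alpha D)^j$ are nonnegative since $-K_X$ is ample and $\alpha D$ is effective, hence $n\alpha\,(-K_X)^{n-1}\cdot D \leq (-K_X)^n - (-(K_X+\alpha D))^n \leq (-K_X)^n$. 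Combined with the bound $(-K_X)^n\leq V_n$ from Theorem \ref{kmm}, this gives $D\cdot(-K_X)^{n-1}\leq \frac{V_n}{n\alpha}=:M_0$, a constant depending only on $n$ and the fixed $\alpha$, hence only on $\dd$ through $\alpha_0$. This proves $(2)$.

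The main obstacle I anticipate is the forward direction $(2)\Rightarrow(1)$: bounding the degree $D\cdot(-K_X)^{n-1}$ by itself does not obviously control the position of $D$ in the full N\'eron--Severi space well enough to keep $-K_X-\frac1M D$ ample, since ampleness is tested against all subvarieties, not just divisors. The resolution is to lean on the \emph{boundedness} half of Theorem \ref{kmm}: smooth Fano $n$-folds form a bounded family, so there is a uniform projective embedding and the intersection pairing is uniformly comparable across $\dd$; on such a bounded family a single degree bound against $-K_X$ does pin down $D$ up to boundedly-many numerical possibilities, and effective Koll\'ar--Matsusaka (Theorem \ref{matsu}) converts this into the required uniform integer $M$. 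Making the dependence of $M$ on $n$ and $M_0$ fully explicit is where the routine but lengthy constant-tracking lives, and I would suppress it exactly as the paper does in the proof of Theorem \ref{logf}.
\end{proof}
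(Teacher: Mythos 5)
First, a bookkeeping point: the displayed statement, Theorem \ref{kmm}, is quoted by the paper from Koll\'ar--Miyaoka--Mori (Theorem 4.4 in \cite{KMM}) with no proof given; the proof that follows it in the text is that of Theorem \ref{logf2}, which is what you have written, so I will compare against that. Your direction $(2)\Rightarrow(1)$ is essentially the paper's argument: apply effective Koll\'ar--Matsusaka with $L=-K_{X}$, bound $L^{n}$ by Theorem \ref{kmm}, use $L^{n}\geq 1$, and bound the remaining intersection numbers by $M_{0}$. The one ingredient you leave unspecified is the nef divisor $B$ dominating $D$: the paper takes $B:=K_{X}+D+2nL=(2n-1)L+D$, which is nef by the cone theorem for the lc pair $(X,D)$, and then $mL-B=(m-2n+1)L-D$ is very ample for $m\geq M(n,M_{0})$. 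Your worries about pinning down $D$ in $N^{1}(X)$ via boundedness of the family are a red herring; Theorem \ref{matsu} only needs the three intersection numbers, all of which are controlled by $M_{0}$ and $f(n)$.

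The direction $(1)\Rightarrow(2)$ contains a genuine error. Your ``cleaner route'' asserts that all the terms $(-K_{X})^{n-j}\cdot(\alpha D)^{j}$ are nonnegative because $-K_{X}$ is ample and $\alpha D$ is effective; this is false for $j\geq 2$, since $D^{j}$ is not an effective cycle class in general (already for a surface $D$ can satisfy $D^{2}<0$). Consequently the inequality $n\alpha\,(-K_{X})^{n-1}\cdot D\leq(-K_{X})^{n}-(-(K_{X}+\alpha D))^{n}$ extracted from the binomial expansion is unjustified. The detour through Theorem \ref{mck} is also unnecessary here (it belongs to the proof of Theorem \ref{logf}). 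The paper's argument avoids the expansion entirely: set $H:=-(K_{X}+\alpha_{0}D)$, which is nef as a limit of ample $\rr$-divisors; then
\begin{align}\notag
\alpha_{0}\,D\cdot(-K_{X})^{n-1}=(-K_{X}-H)\cdot(-K_{X})^{n-1}\leq(-K_{X})^{n},
\end{align}
because a nef divisor intersected with $n-1$ copies of an ample one is nonnegative, and then Theorem \ref{kmm} bounds $(-K_{X})^{n}$. Only the linear term is ever touched, so the problematic higher-order terms never appear.
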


Finally, we give the proof of the last theorem stated in the
Introduction \ref{introduction}.

\begin{proof}[Proof of Theorem \ref{logf2}]
Suppose that there exists $M_{0}$ as in the statement. We want to
apply Koll\'ar-Matsusaka's theorem to find an integer $M$ such that
$-K_{X}-D+M(-K_{X})$ is ample. Let $L:=-K_{X}$ and let
$B:=K_{X}+D+2nL$. We know by the cone theorem that $B$ is nef. We
need to control the following intersection numbers $B\cdot L^{n-1}$
and $H\cdot L^{n-1}$ where $H$ is as in the statement of Theorem
\ref{matsu}. We can write $B=(2n-1)L+D$ and $H=c(n)(n+1)L$ where
$c(n)=n^{3}-n^{2}-n-1$. Since $L$ is ample we have that $L^{n}\geq
1$ and Theorem \ref{kmm} tells us that there exists a function
$f(n)$ of $n$ such that $L^{n}\leq f(n)$. Then we can find the
desired integer $M$ applying Theorem \ref{matsu}.

Now assume that there exists $\alpha_{0}$ as in $(1)$. By assumption
we know that $H:=-(K_{X}+\alpha_{0} D)$ is a nef $\rr$-divisor. Then
\bdism \alpha_{0}D\cdot
(-K_{X})^{n-1}=(-K_{X}-H)\cdot(-K_{X})^{n-1}\leq (-K_{X})^{n},
\edism where the last inequality follows from the fact that $H$ is
nef and $-K_{X}$ is ample. Then it follows from Theorem \ref{kmm}
that we can uniformly bound $D\cdot(-K_{X})^{n-1}$.
\end{proof}

The first part of the proof gives the following.

\begin{corollary}
Let $(X,D)$ be a pair such that $X$ is a smooth Fano variety of
dimension $n$ and $D$ is a reduced effective divisor with simple
normal crossing support. Then there are two positive integers $a(n)$
and $b(n)$ depending only on $n$ such that
$-(K_{X}+\frac{1}{\alpha+1} D)$ is ample for any $\alpha\leq
a(n)\left(-D\cdot K_{X}^{n-1}\right)^{b(n)}$.\\
\end{corollary}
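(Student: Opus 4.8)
The plan is to rerun the first half of the proof of Theorem~\ref{logf2}, this time keeping explicit track of the constants furnished by effective Koll\'ar--Matsusaka. Put $L:=-K_{X}$, which is ample since $X$ is Fano; the goal becomes to exhibit an explicit integer $N_{0}$ such that $NL-D$ is ample for every integer $N\geq N_{0}$, because dividing such a relation by $N$ shows that $-\bigl(K_{X}+\tfrac{1}{N}D\bigr)$ is ample, and setting $N=\alpha+1$ gives a bound of exactly the asserted shape. To produce $N_{0}$, apply Theorem~\ref{matsu} with this $L$, with $B:=K_{X}+D+2nL=(2n-1)L+D$, which is nef by the cone theorem (this is the nefness used in Lemma~\ref{lem1} and in the proof of Theorem~\ref{logf2}), and with $H=\bigl(n^{3}-n^{2}-n-1\bigr)(K_{X}+(n+2)L)=c(n)(n+1)L$ where $c(n):=n^{3}-n^{2}-n-1$. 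Then $mL-B=(m-2n+1)L-D$ is very ample for every $m\geq M$, with $M$ the explicit number in Theorem~\ref{matsu}, so one takes $N_{0}:=M-2n+1$; for $N\geq N_{0}$ one also gets ampleness of $NL-D=(N-N_{0})L+(N_{0}L-D)$ as a sum of a nef and an ample divisor.

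The only real work is to bound the intersection numbers occurring in the formula for $M$ in terms of $n$ and of $D\cdot(-K_{X})^{n-1}$ alone. Since $L$ is ample and integral, $L^{n}=(-K_{X})^{n}\geq 1$, which controls the denominator $(L^{n})^{3^{n-2}(n/2-1/4)-1/4}$ from above; and by Koll\'ar--Miyaoka--Mori (Theorem~\ref{kmm}), $(-K_{X})^{n}\leq f(n)$ for an explicit function $f$ of $n$, so every appearance of $L^{n}$ in a numerator is harmless. Next, $L^{n-1}\cdot H=c(n)(n+1)(-K_{X})^{n}$ is bounded by a function of $n$ only. The boundary divisor $D$ enters only through $L^{n-1}\cdot B=(2n-1)(-K_{X})^{n}+D\cdot(-K_{X})^{n-1}$ and $L^{n-1}\cdot(B+H)$, each of which is at most $(\text{function of }n)+D\cdot(-K_{X})^{n-1}$. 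As the right-hand side of Theorem~\ref{matsu} is a monomial in these quantities with exponents depending only on $n$, substituting the bounds yields $M\leq a(n)\bigl(D\cdot(-K_{X})^{n-1}\bigr)^{b(n)}$ for explicit $a(n),b(n)$ assembled from $f(n)$, $c(n)$ and the Matsusaka exponents; absorbing the additive shift $-2n+1$ into $a(n)$ leaves $N_{0}$ bounded by the same expression, which is what is claimed.

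I do not anticipate a conceptual obstacle: this is exactly the effective refinement of the proof of Theorem~\ref{logf2}, and all the inputs --- boundedness of $(-K_{X})^{n}$ for smooth Fano $X$ (Theorem~\ref{kmm}), nefness of $B$ (cone theorem), and the explicit very ampleness bound (Theorem~\ref{matsu}) --- are already in place. The one point demanding care is the bookkeeping: verifying that each intersection number in the Matsusaka formula is dominated by a constant depending on $n$ times a fixed power of $D\cdot(-K_{X})^{n-1}$, and that the $L^{n}$ in the denominator causes no trouble thanks to $L^{n}\geq 1$. Exactly as for the analogous corollary to Theorem~\ref{logf}, the resulting $a(n)$ and $b(n)$ are explicitly computable, since $f(n)$ and the exponents entering Theorem~\ref{matsu} are.
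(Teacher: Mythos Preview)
Your proposal is correct and is essentially identical to the paper's approach: the paper simply states that this corollary follows from the first part of the proof of Theorem~\ref{logf2}, and you have carried out exactly that argument with the explicit bookkeeping, using $L=-K_{X}$, $B=(2n-1)L+D$, $H=c(n)(n+1)L$, the bound $L^{n}\geq 1$, and the Koll\'ar--Miyaoka--Mori bound on $(-K_{X})^{n}$. The only minor imprecision is that the nefness of $B=K_{X}+D+2nL$ comes from the cone theorem for the lc pair $(X,D)$ (with bound $2n$) rather than from Lemma~\ref{lem1} itself, but this is exactly what the paper invokes as well.
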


\end{document}